%% file: convex_arxiv_20260317.tex
\documentclass[12pt]{amsart}
\usepackage{graphicx}
\usepackage{amsthm}
\usepackage{amssymb}
\usepackage{verbatim}
\usepackage{enumerate}
\title{Convex sets and axiom of choice}
\date{}
\author{Yasuo~Yoshinobu}
\thanks{The author is partially supported by JSPS KAKENHI Grant Number 18K03394.}
\address{Graduate School of Informatics, Nagoya University, Furo-cho, Chikusa-ku, Nagoya 464-8601, Japan}
\email{yosinobu@i.nagoya-u.ac.jp}
\theoremstyle{definition}
\newtheorem{dfn}{Definition}[section]
\newtheorem{prop}[dfn]{Proposition}
\newtheorem{thm}[dfn]{Theorem}
\newtheorem{lma}[dfn]{Lemma}
\newtheorem{cor}[dfn]{Corollary}

\newtheorem{qtn}[dfn]{Question}

\newcommand{\restrict}{\upharpoonright}

\newcommand{\seq}[1]{{\langle#1\rangle}}

\newcommand{\R}{{\mathbb R}}
\newcommand{\Q}{{\mathbb Q}}

\makeatletter
\renewcommand{\p@enumii}{}
\makeatother
\begin{document}
\subjclass[2020]{Primary 03E25; Secondary 52A15, 52A10, 52A05}
\keywords{convex sets, axiom of choice}
\begin{abstract}
\input{convex_abstract.tex}
\end{abstract}
\maketitle
\section{Introduction}
\input{convex_intro.tex}
\input{convex_notations.tex}

\input{convex_faces.tex}

\input{convex_choices.tex}

\section{Basic facts about $\mathrm{MCV}$}
\input{convex_basics.tex}

\input{convex_generalchoice.tex}

\input{convex_fullmcv.tex}

\section{$\mathrm{MCV}(2)$ and $\mathrm{MCV}(3)$}
\input{convex_2and3.tex}

\section{Faces and convex filtrations}
\input{convex_generalapproach.tex}

\section{$\mathrm{CC}_\R$ implies $\mathrm{MCV}(2)$}
\input{convex_cctomcv2.tex}

\section{Moore's theorem and graphs on a surface}
\input{convex_moore.tex}
\section{$\mathrm{Unif}_\R$ implies $\mathrm{MCV}(3)$}
\input{convex_uniftomcv3.tex}

\section{Higher dimensions}
\input{convex_higher.tex}

\section{Summary}
\input{convex_summary.tex}
\section*{Acknowledgements}
\input{convex_acknowledgements.tex}
\bibliographystyle{plain}
\bibliography{locco}
\end{document}

%% file: convex_abstract.tex
Under $\mathrm{ZF}$, we show that the statement that every subset of every $\R$-vector space has a maximal convex subset is equivalent to the Axiom of Choice. We also study the strength of the same statement restricted to some specific $\R$-vector spaces. In particular, we show that the statement for $\R^2$ is equivalent to the Axiom of Countable Choice for reals, whereas the statement for $\R^3$ is equivalent to the Axiom of Uniformization. We discuss the statement for some spaces of higher dimensions as well.

%% file: convex_intro.tex
Convex geometry is a subject of mathematics which has relatively heavy dependence on the axiom of choice ($\mathrm{AC}$). For example, some basic theorems of this subject, like the Hahn-Banach theorems and the Krein-Milman theorems need certain fragments of $\mathrm{AC}$ in their proofs (see Bell-Fremlin \cite{bellfremlin72:_geometric} for the extent of dependence of these theorems on $\mathrm{AC}$).

In this paper, we investigate another, somewhat more naive, connection between convex sets and $\mathrm{AC}$. 

Let us consider the following family of statements:

\begin{dfn}\label{dfn:mcv}
For an $\R$-vector space $V$, $\mathrm{MCV}(V)$ denotes the statement that every subset $X$ of $V$ has a maximal convex subset. $\mathrm{MCV}$ denotes the statement that $\mathrm{MCV}(V)$ holds for every $\R$-vector space $V$. For a (not necessarily well-orderable) cardinal $\kappa$ we abusively write $\mathrm{MCV}(\kappa)$ to denote $\mathrm{MCV}(V)$ for an $\R$-vector space $V$ with a basis of cardinality $\kappa$.
\end{dfn}

Note that $\mathrm{MCV}$ can be proved by a typical use of Zorn's Lemma, an equivalent form of the Axiom of Choice ($\mathrm{AC}$). Therefore, under $\mathrm{ZF}$, $\mathrm{MCV}$ and $\mathrm{MCV}(V)$ for each $V$ can be regarded as fragments of $\mathrm{AC}$. In this paper we study the strengths of these statements under $\mathrm{ZF}$. 

This paper is organized as follows: The rest of this section we give basic notations and preliminaries used in this paper. The preliminaries contain a list of some relevant (mostly known) fragments of $\mathrm{AC}$, and a quick review on the basic theory of faces of convex sets in $\R$-vector spaces of finite dimensions.
In Section 2 we observe basic properties of $\mathrm{MCV}$ and $\mathrm{MCV}(V)$.
From Section 3 to 7 we will discuss $\mathrm{MCV}(2)$ and $\mathrm{MCV}(3)$.
In Section 3, we show that $\mathrm{MCV}(2)$ implies the Axiom of Countable Choice for reals ($\mathrm{CC}_\R$), and that $\mathrm{MCV}(3)$ implies the Axiom of Uniformization ($\mathrm{Unif}_\R$).
In Section 4 we prepare a general framework to find a maximal convex subset of a given subset of an $\R$-vector space of finite dimension, using the theory of faces.
In Section 5 we show that $\mathrm{CC}_\R$ implies $\mathrm{MCV}(2)$, using the framework constructed in Section 4.
In Section 6 we reproduce the proof of a (variation of) theorem concerning a topology of $\R^2$, which was proved by Moore \cite{moore1928:_triods}, only with a very weak fragment of $\mathrm{AC}$, and observe a lemma concerning planar graphs as an application of the theorem.
In Section 7 we show that $\mathrm{Unif}_\R$ implies $\mathrm{MCV}(3)$, again using the framework  of Section 4 and the lemma proved in Section 6.
In Section 8 we discuss $\mathrm{MCV}(V)$ for $V$'s of higher (infinite) dimensions.

Throughout this paper, unless otherwise stated we work in $\mathrm{ZF}$.

%% file: convex_notations.tex
\subsection{Notations and definitions}
Let $V$ be an $\R$-vector space.

For $\mathbf{p}$, $\mathbf{q}\in V$ we denote
\begin{eqnarray*}
(\mathbf{p}, \mathbf{q})&=&\{t\mathbf{p}+(1-t)\mathbf{q}\mid 0<t<1\}\ \text{and}\\
\lbrack \mathbf{p}, \mathbf{q} \rbrack&=&\{t\mathbf{p}+(1-t)\mathbf{q}\mid 0\leq t\leq1\}.
\end{eqnarray*}
Note that for distinct $\mathbf{p}$ and $\mathbf{q}$, $(\mathbf{p}, \mathbf{q})$ and $[\mathbf{p}, \mathbf{q}]$ respectively denote the open and closed line segments with endpoints $\mathbf{p}$ and $\mathbf{q}$.

Recall that $C\subseteq V$  is {\it convex\/} if $(\mathbf{p}, \mathbf{q})\in C$ for every $\mathbf{p}$, $\mathbf{q}\in C$.

For notations below, subscripts are often omitted if they are clear from the context.

For $X\subseteq V$, $\operatorname{conv}_V X$ denotes the {\it convex hull\/} of $X$, the smallest convex subset of $V$ containing $X$.
$\operatorname{aff}_V X$ denotes the {\it affine hull\/} of $X$, the smallest affine subspace of $V$ containing $X$. 

Now suppose $V=\R^n$ for some $n<\omega$. We consider $V$ as metrized and thus topologized by the standard Euclidean metric.

For $\mathbf{p}\in V$ and $r>0$, $B_V(\mathbf{p}; r)$ denotes the open ball of radius $r$ with center $\mathbf{p}$. 

For $X\subseteq V$, $\operatorname{cl}_V X$, $\operatorname{int}_V X$, $\partial_V X$ respectively denotes the closure, the interior and 
the boundary of $X$. 

$\dim X$ denotes $\dim\operatorname{aff}_V X$. 

$\operatorname{rint}_V X$ denotes the {\it relative interior\/} of $X$, that is, the interior of $X$ within $\operatorname{aff}_V(X)$.

$\operatorname{rbd}_V X$ denotes the {\it relative boundary\/} of $X$, that is, $\operatorname{rbd}_V X=\operatorname{cl}_V X\setminus\operatorname{rint}_V X$.

For a set $I$, we denote
\begin{eqnarray*}
&&\R^I=\{\mathbf{p}:I\to\R\}.\\
&&\R_I=\{\mathbf{p}\in\R^I\mid\text{$\{i\in I\mid\mathbf{p}(i)\not=0\}$ is finite}\}.
\end{eqnarray*}
Both $\R^I$ and $\R_I$ are naturally regarded as $\R$-vector spaces. 
For each $i\in I$, let $\mathbf{b}_i$ denote the member of $\R_I$ such that $\mathbf{b}_i(i)=1$ and $\mathbf{b}_i(j)=0$ (for every $j\in I$ with $j\not=i$). Then $\R_I$ is an $\R$-vector space with basis $\{\mathbf{b}_i\mid i\in I\}$.

%% file: convex_faces.tex
\subsection{Faces of convex sets}

Here we quickly review some basic facts in the theory of (extreme) faces of convex sets in finite dimensional $\R$-vector spaces. We will use these facts to develop a general method to find a maximal convex subset of a given subset of a finite dimensional $\R$-vector space.
Facts mentioned here are standard, or are easily deduced from standard facts. The readers are referred to Soltan \cite[Chapter 11]{soltan2020:_convexsets} for more reading on the theory of extreme faces.

Let $V$ be a finite dimensional $\R$-vector space.

\begin{dfn}\label{dfn:face}
Let $C\subseteq V$ convex. A subset $F\subseteq C$ is said to be a {\it face} (more precisely, an {\it extreme face\/}) of $C$ if
\begin{enumerate}[(i)]
\item $F$ is convex, and
\item whenever $\mathbf{p}$, $\mathbf{q}\in C$ and $(\mathbf{p}, \mathbf{q})\cap F\not=\emptyset$, then $\mathbf{p}$, $\mathbf{q}\in F$.
\end{enumerate}
Clearly $C$ and $\emptyset$ are faces of $C$. We call other faces as {\it proper faces\/} of $C$. We call a face of dimension $r$ as an {\it $r$-face}. We let $\mathcal{F}_C$ denote the set of faces of $C$.
\end{dfn}

The following are the basic facts about faces.

\begin{prop}\label{prop:basic}\normalfont
Let $C\subseteq V$ be convex.
\begin{enumerate}[(1)]
\item\label{item:faceofface} For any $F\in\mathcal{F}_C$ and $F'\subseteq F$, $F'\in\mathcal{F}_F$ iff $F'\in\mathcal{F}_C$.
\item\label{item:properface} If $F$ is a proper face of $C$, then $F\subseteq\operatorname{rbd}C$ and $\operatorname{dim}F<\operatorname{dim}C$.
\item\label{item:zeroface} If $\operatorname{dim} C=1$, $C$ has at most two $0$-faces.
\item\label{item:minusone} If $\operatorname{dim} C=r>1$, $C$ has at most countably many $(r-1)$-faces.
\end{enumerate}
\end{prop}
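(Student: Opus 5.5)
We prove the four items in order, working in $\mathrm{ZF}$. All arguments are elementary: the only infinite objects are countable bases and finite-dimensional affine hulls, so no choice is needed.

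For (\ref{item:faceofface}), we unwind Definition~\ref{dfn:face}. Suppose first that $F'\in\mathcal{F}_C$. Then $F'$ is convex, and the condition for $\mathbf{p},\mathbf{q}\in F$ is a special case of the condition for $\mathbf{p},\mathbf{q}\in C$, so $F'\in\mathcal{F}_F$. Conversely, suppose $F'\in\mathcal{F}_F$, and let $\mathbf{p},\mathbf{q}\in C$ with $(\mathbf{p},\mathbf{q})\cap F'\neq\emptyset$. Since $F'\subseteq F$, the segment $(\mathbf{p},\mathbf{q})$ meets $F$. Because $F$ is a face of $C$, we get $\mathbf{p},\mathbf{q}\in F$. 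Because $F'$ is a face of $F$, we then get $\mathbf{p},\mathbf{q}\in F'$.

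For (\ref{item:properface}), let $F$ be a proper face and take $\mathbf{x}\in F$. Suppose $\mathbf{x}\in\operatorname{rint}C$, and take any $\mathbf{y}\in C$ with $\mathbf{y}\neq\mathbf{x}$. Since $\mathbf{x}$ is relatively interior, the point $\mathbf{z}=\mathbf{x}+\varepsilon(\mathbf{x}-\mathbf{y})$ lies in $C$ for small $\varepsilon>0$. Then $\mathbf{x}\in(\mathbf{y},\mathbf{z})$, so $\mathbf{y}\in F$. Hence $F=C$, a contradiction. Therefore $F\subseteq C\setminus\operatorname{rint}C\subseteq\operatorname{rbd}C$.

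For the dimension claim, suppose $\dim F=\dim C$. Then $\operatorname{aff}F=\operatorname{aff}C$. A nonempty convex set has nonempty relative interior in its own affine hull; this is the standard fact obtained from a simplex on affinely independent points. So $\operatorname{rint}F$ is a nonempty open subset of $\operatorname{aff}C$ contained in $C$, hence contained in $\operatorname{rint}C$. This contradicts $F\subseteq\operatorname{rbd}C$. The empty face causes no trouble if we take $\dim\emptyset=-1$.

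For (\ref{item:zeroface}), identify $\operatorname{aff}C$ with $\R$, so that $C$ is an interval. A $0$-face $\{\mathbf{p}\}$ is not in $\operatorname{rint}C$ by (\ref{item:properface}), so $\mathbf{p}$ is an endpoint of the interval belonging to $C$. An interval has at most two endpoints.

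For (\ref{item:minusone}), the main step, we must avoid any choice. Let $F$ be an $(r-1)$-face.
\begin{itemize}
\item Its affine hull $H=\operatorname{aff}F$ is a hyperplane in $\operatorname{aff}C$.
\item We claim $\operatorname{rint}C$ lies on one side of $H$. Otherwise there are $\mathbf{u},\mathbf{v}\in\operatorname{rint}C$ on strictly opposite sides of $H$, or a point of $\operatorname{rint}C$ on $H$. Take $\mathbf{x}\in\operatorname{rint}F$, which is nonempty by the fact above. Using convexity, we can place $\mathbf{x}$ in an open segment with endpoints in $C$ that leave $H$, so those endpoints are not in $F$, contradicting that $F$ is a face.
\item Distinct $(r-1)$-faces $F\neq F'$ have disjoint relative interiors. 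Indeed, a point common to $\operatorname{rint}F$ and $\operatorname{rint}F'$ makes $F\cap F'$ a face containing relative interior points of both. Applying (\ref{item:properface}) inside $F$ via (\ref{item:faceofface}) forces $F\cap F'=F=F'$.
\end{itemize}
Each $\operatorname{rint}F$ is a nonempty relatively open subset of an $(r-1)$-dimensional affine space. Fix in advance a countable dense set $D$ in $V$: fix a basis of $\operatorname{aff}C$ and take rational combinations. For each $F$, choose canonically the least element of $\operatorname{rint}F$ lying in a rational-coordinate grid of $H$. The difficulty is that $H$ need not contain rational points. Instead, we code $F$ by a rational open box $U$ in $\operatorname{aff}C$ such that $U\cap H\subseteq\operatorname{rint}F$ and $U\cap H\neq\emptyset$. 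Such a box exists because $\operatorname{rint}F$ is relatively open in $H$. We add the side on which $\operatorname{rint}C$ lies, as a rational point of $\operatorname{rint}C$.

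The main obstacle is showing this coding is injective. Suppose $F\neq F'$ share a box $U$ and that rational point. Then $H\neq H'$, since disjoint relative interiors in the same hyperplane cannot fill the same $U\cap H$. So $H\cap U$ and $H'\cap U$ are two hyperplane pieces through a box, with $C$ on one side of each. Near their crossing, $C$ would have to contain points whose segments cross $\operatorname{rint}F$ transversally. This contradicts the one-side property, and gives a countable-to-one (indeed injective) map into a countable set of codes. I expect carefully verifying this last geometric claim to be the hardest part.
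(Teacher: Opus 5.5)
The paper gives no proof of this proposition; it defers to Soltan. So your argument has to stand on its own. Items (1)--(3) are fine, as are the first three bullets of (4): $\operatorname{aff}F$ is a hyperplane of $\operatorname{aff}C$, $\operatorname{rint}C$ lies strictly on one side of it, and distinct $(r-1)$-faces have disjoint relative interiors.

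The gap is the final coding step, and the injectivity you hoped to verify is false. Let $C=\{(x,y)\in\R^2\mid |y|\le x\le 1\}$, with $1$-faces $F=[(0,0),(1,1)]$ on $H=\{y=x\}$ and $F'=[(0,0),(1,-1)]$ on $H'=\{y=-x\}$. The rational box $U=(0.4,0.6)\times(-0.6,0.6)$ satisfies $\emptyset\neq U\cap H\subseteq\operatorname{rint}F$ and $\emptyset\neq U\cap H'\subseteq\operatorname{rint}F'$. The point $(1/2,0)$ is a rational point of $\operatorname{rint}C$ on the $C$-side of both lines. So $F\neq F'$ admit the same code, and if your fixed enumeration lists $U$ first, the least-box map sends both to it.

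Your crossing argument has nothing to act on. Here $H\cap H'=\{(0,0)\}$ lies outside $U$, and in fact two such hyperplanes can never cross inside $U$, since a crossing point would lie in $\operatorname{rint}F\cap\operatorname{rint}F'$. The fallback ``countable-to-one'' does not rescue it either. You give no bound on the fibres, and in $\mathrm{ZF}$, the paper's standing assumption, a countable-to-one map into $\omega$ does not yield countability.

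A choice-free repair uses a cone from a fixed interior point instead of boxes. Fix $\mathbf{c}\in\operatorname{rint}C$ and an enumeration $\seq{\mathbf{q}_n\mid n<\omega}$ of the rational points of $\operatorname{aff}C$ with respect to a fixed affine frame. For each $(r-1)$-face $F$ let $K_F=\{t\mathbf{c}+(1-t)\mathbf{x}\mid 0<t<1,\ \mathbf{x}\in\operatorname{rint}F\}$.
\begin{itemize}
\item By your one-side bullet, $\mathbf{c}\notin\operatorname{aff}F$. Hence $(t,\mathbf{x})\mapsto t\mathbf{c}+(1-t)\mathbf{x}$ is a homeomorphism of $(-\infty,1)\times\operatorname{aff}F$ onto an open half-space of $\operatorname{aff}C$, so $K_F$ is nonempty and open in $\operatorname{aff}C$.
\item Suppose $K_F\cap K_{F'}\neq\emptyset$. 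Then some $\mathbf{x}\in\operatorname{rint}F$ and $\mathbf{x}'\in\operatorname{rint}F'$ lie on a common ray from $\mathbf{c}$. If, say, $\mathbf{x}'\in(\mathbf{c},\mathbf{x})$, the face property of $F'$ gives $\mathbf{c}\in F'\subseteq\operatorname{rbd}C$, contradicting (2). Hence $\mathbf{x}=\mathbf{x}'$, and $F=F'$ by your disjointness bullet.
\end{itemize}
Thus $F\mapsto\min\{n\mid\mathbf{q}_n\in K_F\}$ is an injection into $\omega$.

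Alternatively, keep your boxes but also require $U\cap C=U\cap H^+$, where $H^+$ is the closed half-space bounded by $H$ that contains $C$. Such boxes exist around each point of $\operatorname{rint}F$. Then $U$ recovers $U\cap H$ as the boundary of $U\cap C$ in $U$, which determines $F$.
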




\begin{prop}\label{prop:face}
Let $C\subseteq V$ be convex.
\begin{enumerate}[(1)]
\item\label{item:lattice} $(\mathcal{F}_C, \subseteq)$ is a complete lattice with the maximum element $C$ and the minimum element $\emptyset$.
\item\label{item:rintunion} $C=\coprod_{F\in\mathcal{F}_C}\mathrm{rint} F=\coprod_{F\in\mathcal{F}_C\setminus\{\emptyset\}}\mathrm{rint} F$.
\item\label{item:pqrint} For every $F_0$, $F_1\in\mathcal{F}_C$ and distinct $\mathbf{p}$, $\mathbf{q}$ such that $\mathbf{p}\in\operatorname{rint} F_0$ and $\mathbf{q}\in\operatorname{rint} F_1$, $(\mathbf{p}, \mathbf{q})\subseteq\operatorname{rint} (F_0\lor F_1)$, where $F_0\lor F_1$ denotes the join of $F_0$ and $F_1$ in the lattice $(\mathcal{F}_C, \subseteq)$.
\end{enumerate}
\end{prop}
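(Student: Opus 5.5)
We prove the three items in order, working in $V$ of finite dimension and using only Proposition~\ref{prop:basic}. No choice is involved: every object we build is defined explicitly from $C$.

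For (\ref{item:lattice}), we first show that $\mathcal{F}_C$ is closed under arbitrary intersections. Let $F=\bigcap_{i\in I}F_i$ with each $F_i\in\mathcal{F}_C$, where the empty intersection is read as $C$. Then $F$ is convex. Moreover, if $\mathbf{p},\mathbf{q}\in C$ and $(\mathbf{p},\mathbf{q})\cap F\neq\emptyset$, then $(\mathbf{p},\mathbf{q})$ meets every $F_i$. Hence $\mathbf{p},\mathbf{q}\in F_i$ for all $i$, so $\mathbf{p},\mathbf{q}\in F$. A family of subsets of $C$ that contains $C$ and is closed under arbitrary intersections is a complete lattice under $\subseteq$, with meet equal to intersection. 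The join of a family is the intersection of all faces containing its union. The maximum is $C$, and the minimum is $\emptyset$, which is a face.

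For (\ref{item:rintunion}), fix $\mathbf{x}\in C$ and let $F_{\mathbf{x}}$ be the smallest face containing $\mathbf{x}$, which exists by (\ref{item:lattice}). We must show two things.

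First, $\mathbf{x}\in\operatorname{rint}F_{\mathbf{x}}$. Suppose not. Then $\mathbf{x}\in\operatorname{rbd}F_{\mathbf{x}}$, and since $F_{\mathbf{x}}$ is convex and finite dimensional, there is a supporting hyperplane $H$ of $F_{\mathbf{x}}$ within $\operatorname{aff}F_{\mathbf{x}}$ through $\mathbf{x}$ that does not contain $F_{\mathbf{x}}$. We want a proper face of $F_{\mathbf{x}}$ containing $\mathbf{x}$, but $F_{\mathbf{x}}\cap H$ need not be a face, since exposed faces are not all faces. Instead we use the following descent. Let $G$ be the set of $\mathbf{y}\in F_{\mathbf{x}}$ such that $\mathbf{x}\in(\mathbf{y},\mathbf{z})$ for some $\mathbf{z}\in F_{\mathbf{x}}$, together with $\mathbf{x}$ itself. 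This is the standard minimal face, and one checks directly that it is convex and a face of $F_{\mathbf{x}}$. Every such $\mathbf{y}$ lies in $H$, because $\mathbf{x}\in H$ and $F_{\mathbf{x}}$ lies on one side of $H$. So $G\subseteq H\cap F_{\mathbf{x}}\subsetneq F_{\mathbf{x}}$. By Proposition~\ref{prop:basic}(\ref{item:faceofface}), $G$ is then a face of $C$ containing $\mathbf{x}$ that is strictly smaller than $F_{\mathbf{x}}$, contradicting minimality.

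Second, the sets $\operatorname{rint}F$ are disjoint. Suppose $\mathbf{x}\in\operatorname{rint}F\cap\operatorname{rint}F'$. Then $F_{\mathbf{x}}\subseteq F\cap F'$, and $F_{\mathbf{x}}$ is a face of $F$ by Proposition~\ref{prop:basic}(\ref{item:faceofface}). If $F_{\mathbf{x}}\neq F$, then it is a proper face of $F$, so by Proposition~\ref{prop:basic}(\ref{item:properface}) it lies in $\operatorname{rbd}F$. That contradicts $\mathbf{x}\in\operatorname{rint}F$. Hence $F=F_{\mathbf{x}}$, and by the same argument $F'=F_{\mathbf{x}}$. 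Finally, $\operatorname{rint}\emptyset=\emptyset$, which gives the second equality in (\ref{item:rintunion}).

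For (\ref{item:pqrint}), let $F=F_0\lor F_1$ and fix $\mathbf{m}\in(\mathbf{p},\mathbf{q})$. By (\ref{item:rintunion}), $\mathbf{m}\in\operatorname{rint}F_{\mathbf{m}}$, so it suffices to show $F_{\mathbf{m}}=F$.

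Since $\mathbf{m}\in F_{\mathbf{m}}$ and $\mathbf{p},\mathbf{q}\in C$, the face property puts $\mathbf{p},\mathbf{q}$ in $F_{\mathbf{m}}$. Because $\mathbf{p}\in\operatorname{rint}F_0$, for any $\mathbf{y}\in F_0$ we can extend the segment from $\mathbf{y}$ slightly past $\mathbf{p}$ inside $F_0$. Then $\mathbf{p}$ lies in an open segment with endpoint $\mathbf{y}$, so $\mathbf{y}\in F_{\mathbf{m}}$. This gives $F_0\subseteq F_{\mathbf{m}}$, and likewise $F_1\subseteq F_{\mathbf{m}}$, so $F\subseteq F_{\mathbf{m}}$.

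Conversely, $F$ is convex and contains $\mathbf{p},\mathbf{q}$, so $\mathbf{m}\in F$, which gives $F_{\mathbf{m}}\subseteq F$. Hence $F_{\mathbf{m}}=F$ and $\mathbf{m}\in\operatorname{rint}F$.

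The step that needs the most care is the first half of (\ref{item:rintunion}): showing $\mathbf{x}\in\operatorname{rint}F_{\mathbf{x}}$, which depends on the "$\mathbf{x}$ is interior to a segment" face $G$ described above.
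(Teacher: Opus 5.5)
Your proof is correct. The paper does not prove this proposition. It lists it among the ``standard'' facts of face theory and refers to Soltan, Chapter~11, so yours is a self-contained derivation rather than a variant of an argument in the paper.

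Organizing everything around the minimal face $F_{\mathbf{x}}$ works well:
\begin{itemize}
\item (1) is closure of $\mathcal{F}_C$ under intersections;
\item the partition in (2) reduces to $\mathbf{x}\in\operatorname{rint}F_{\mathbf{x}}$ together with Proposition~\ref{prop:basic}(\ref{item:properface});
\item (3) reduces to showing $F_{\mathbf{m}}=F_0\lor F_1$.
\end{itemize}
It is also visibly valid in $\mathrm{ZF}$, which matters in this paper. Each step instantiates a single existential statement in finite dimension, the supporting hyperplane included, and no family of choices is needed.

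One aside in the first half of (2) is false, although the proof survives it. You say $F_{\mathbf{x}}\cap H$ ``need not be a face'', but it is always a face of $F_{\mathbf{x}}$. Suppose $F_{\mathbf{x}}\subseteq\{\ell\le c\}$, $H=\{\ell=c\}$, and $(\mathbf{a},\mathbf{b})$ meets $H$ for some $\mathbf{a},\mathbf{b}\in F_{\mathbf{x}}$. Then a strict convex combination of $\ell(\mathbf{a}),\ell(\mathbf{b})\le c$ equals $c$, which forces $\ell(\mathbf{a})=\ell(\mathbf{b})=c$. Exposed faces are always faces; it is the converse that fails.

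So $F_{\mathbf{x}}\cap H$ is already a face of $C$, by Proposition~\ref{prop:basic}(\ref{item:faceofface}). It contains $\mathbf{x}$ and is properly contained in $F_{\mathbf{x}}$, which gives the contradiction at once. The detour through $G$ is therefore unnecessary. It was also the one place where you left a verification undone (``one checks directly'').

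If you keep $G$, its face property needs a short triangle argument. Suppose $\mathbf{c}\in(\mathbf{a},\mathbf{b})\cap G$ and $\mathbf{w}=\mathbf{x}+\epsilon(\mathbf{x}-\mathbf{c})\in F_{\mathbf{x}}$. Then $\mathbf{x}$ is a convex combination of $\mathbf{a},\mathbf{b},\mathbf{w}$ with positive coefficients. Hence $\mathbf{x}+\delta(\mathbf{x}-\mathbf{a})\in\operatorname{conv}\{\mathbf{a},\mathbf{b},\mathbf{w}\}$ for small $\delta>0$, so $\mathbf{a}\in G$, and similarly $\mathbf{b}\in G$.
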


%% file: convex_choices.tex
\subsection{Fragments of $\mathrm{AC}$}

Here we list some relevant fragments of $\mathrm{AC}$. See \cite{howardrubin:_consequence}, \cite{herrlich:_choice} and \cite{jech:_choice} for more reading about fragments of $\mathrm{AC}$. 
\begin{dfn}\label{dfn:choice}
Let $X$ be a set.
\begin{enumerate}[(1)]
\item $\mathrm{WO}_X$ denotes the statement that $X$ is well-orderable.
\item $\mathrm{SC}_X$ denotes the statement that for every $C\subseteq[X]^{<\omega}$ satisfying $\emptyset\in C$ there exists a maximal $P\subseteq X$ such that $[P]^{<\omega}\subseteq C$.
\item $\mathrm{CQ}_X$ denotes the statement that every graph $G$ on $X$ (that is, $G$ is a subset of $[X]^2$) has a maximal clique (that is, a maximal $C\subseteq X$ such that $[C]^2\subseteq G$).
\item $\mathrm{EQ}_X$ denotes the statement that every equivalence relation $E$ on $X$ has a complete system of representatives.
\item $\mathrm{Unif}_X$ denotes the axiom of {\it uniformization on $X$}, which states that every binary relation on $X$ is uniformizable.
\item $\mathrm{DC}_X$ denotes the axiom of {\it dependent choice on $X$}, which states that whenever $R$ is a binary relation on $X$ such that for every $x\in X$ there exists $y\in X$ such that $xRy$, there exists a function $f:\omega\to X$ such that $f(n)Rf(n+1)$ for every $n<\omega$.
\item $\mathrm{CC}_X$ denotes the axiom of {\it countable choice on $X$}, which states that for every sequence $\seq{A_n\mid n<\omega}$ of nonempty subsets of $X$ there exists a function $f:\omega\to X$ such that $f(n)\in A_n$ for every $n<\omega$.
\end{enumerate}
\end{dfn}

We are mostly interested in case $X=\R$ or $\mathcal{P}(\R)$ for above fragments of $\mathrm{AC}$.
The following are easy implications between above fragments of $\mathrm{AC}$.
\begin{prop}\label{prop:easy}
\begin{enumerate}[(1)]
\item\label{item:yx} For any set $X$ and $Y$ such that $|X|\leq|Y|$, $\mathrm{WO}_Y$ ({\it resp.\/} $\mathrm{SC}_Y$, $\mathrm{CQ}_Y$, $\mathrm{EQ}_Y$, $\mathrm{Unif}_Y$, $\mathrm{DC}_Y$, $\mathrm{CC}_Y$) implies $\mathrm{WO}_X$ ({\it resp.\/} $\mathrm{SC}_X$, $\mathrm{CQ}_X$, $\mathrm{EQ}_X$, $\mathrm{Unif}_X$, $\mathrm{DC}_X$, $\mathrm{CC}_X$).
\item\label{item:wosccqeq} For any set $X$, for any two of $\mathrm{WO}_X$, $\mathrm{SC}_X$, $\mathrm{CQ}_X$, $\mathrm{EQ}_X$ the former implies the latter, and $\mathrm{Unif}_X$ implies $\mathrm{DC}_X$.
\item\label{item:equnif} For any set $X$, if $|X|^2=|X|$, then $\mathrm{EQ}_X$ implies $\mathrm{Unif}_X$.
\item\label{item:dccc}(Andretta-Notaro \cite{andrettanotaro25:_doesdc}) For any set $X$, if $|X\times 2|=|X|$, then $\mathrm{DC}_X$ implies $\mathrm{CC}_X$.
\item\label{item:unifwo} For any set $X$, $\mathrm{Unif}_{\mathcal{P}(X)}$ implies $\mathrm{WO}_X$.
\item\label{item:scult} For any set $X$, $\mathrm{SC}_{\mathcal{P}(X)}$ implies that every proper filter on $X$ can be extended to an ultrafilter.
\item\label{item:eqnonmeas} $\mathrm{EQ}_\R$ implies that $\omega_1\leq 2^\omega$ and the existence of Lebesgue non-measurable subsets of $\R$.
\end{enumerate}
\end{prop}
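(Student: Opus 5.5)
The statement has seven items, and I will treat each separately with a short ZF argument.

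For (1), fix an injection $\iota:X\to Y$ and transport the structure on $X$ to $\iota[X]\subseteq Y$. Apply the hypothesis on $Y$, padding where needed, and then pull the result back along $\iota$.
\begin{itemize}
\item For $\mathrm{WO}$: restrict the well-order of $Y$ to $\iota[X]$.
\item For $\mathrm{SC}$: take $C'=\{\iota[s]\mid s\in C\}$. Any maximal $P\subseteq Y$ with $[P]^{<\omega}\subseteq C'$ satisfies $P\subseteq\iota[X]$, since $\{y\}\in C'$ forces $y\in\iota[X]$. Then $\iota^{-1}[P]$ is maximal for $C$.
\item For $\mathrm{CQ}$: a clique inside $\iota[X]$ is maximal there. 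For the part of the graph outside $\iota[X]$, make $Y\setminus\iota[X]$ an independent set not joined to $\iota[X]$. A maximal clique is then either inside $\iota[X]$, or a singleton outside it (which happens only if $X=\emptyset$). This case is handled trivially.
\item For $\mathrm{EQ}$: extend $E$ by making $Y\setminus\iota[X]$ one extra class.
\item For $\mathrm{Unif}$: extend the relation by the empty relation.
\item For $\mathrm{DC}$ and $\mathrm{CC}$: use the restriction directly. Note that a relation $R$ on $X$ transports to a total relation on $\iota[X]$. Extend it to $Y$ by letting points outside $\iota[X]$ relate to a fixed $\iota(x_0)$. Then a sequence starting in $\iota[X]$ stays there; to start it there, apply $\mathrm{DC}_Y$ to the relation restricted so that only elements of $\iota[X]$ are related.
\end{itemize}

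For (2), there are three steps.
\begin{itemize}
\item $\mathrm{WO}\Rightarrow\mathrm{SC}$: build $P$ by transfinite recursion along the well-order. Add $x$ if all finite subsets of $P\cup\{x\}$ lie in $C$. Maximality follows because adding a rejected $x$ would already have failed at its stage, and $[P]^{<\omega}\subseteq C$ is preserved at limits since finite sets appear at a successor stage.
\item $\mathrm{SC}\Rightarrow\mathrm{CQ}$: take $C$ to be the set of finite cliques of $G$.
\item $\mathrm{CQ}\Rightarrow\mathrm{EQ}$: a maximal clique of the graph $\{\{x,y\}\mid\neg xEy\}$ is a transversal.
\end{itemize}
For $\mathrm{Unif}_X\Rightarrow\mathrm{DC}_X$, uniformize $R$ to a function $f$, fix any $x_0$, and iterate $f$.

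For (3), fix a bijection $X\times X\to X$. Given $R$, apply $\mathrm{EQ}$ to the relation on $R$ (viewed inside $X$) that identifies pairs with the same first coordinate. The chosen representatives give a uniformizing function.

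For (4), this is cited, so I would just refer to Andretta--Notaro.

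For (5), for each nonempty $A\subseteq X$ let $R$ relate $A$ to $\{a\}$ for each $a\in A$. The uniformization is then a choice function on $\mathcal{P}(X)\setminus\{\emptyset\}$. To conclude $\mathrm{WO}_X$, use the usual recursion: repeatedly choose from the complement of what has been chosen so far.

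For (6), given a filter $\mathcal{F}$, let $C$ be the set of finite families $s\subseteq\mathcal{P}(X)$ such that $s\cup\mathcal{F}$ has the finite intersection property. Then $\emptyset\in C$. A maximal $P$ has the finite intersection property together with $\mathcal{F}$, so by maximality it contains $\mathcal{F}$, is closed under supersets and intersections, and for every $A$ contains $A$ or $X\setminus A$. It is therefore an ultrafilter.

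For (7), take the equivalence relation on $\R$ of having the same order type of well-orders coded by reals, with non-codes forming one class. A transversal then gives an injection $\omega_1\to\R$. For non-measurability, a transversal for $x-y\in\Q$ is a Vitali set, and the classical argument showing it is non-measurable needs only countable additivity for a countable disjoint family of translates, which is available in ZF.

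The main obstacle is keeping (1) for $\mathrm{CQ}$ and $\mathrm{DC}$ honest about the padding, together with the step $\mathrm{Unif}\Rightarrow\mathrm{WO}$ in (5).
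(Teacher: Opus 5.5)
\textbf{Overall.} Your proposal mostly follows the paper. For (3), (5), (6) and (7) you use the same equivalence relation, the same relation $A\mapsto\{a\}$, the same family $C$ (finite families positive for $\mathcal{F}$) and the same transversals as the paper. Your arguments for (2) are the standard ones the paper calls straightforward. There is one step that fails as written and one imprecise step.

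\textbf{The $\mathrm{CQ}$ case of (1) fails.} Suppose you make $Y\setminus\iota[X]$ independent and not joined to $\iota[X]$. Then every $y\in Y\setminus\iota[X]$ is an isolated vertex, so $\{y\}$ is a maximal clique of the padded graph whenever $\iota$ is not onto, not only when $X=\emptyset$. So $\mathrm{CQ}_Y$ may return exactly such a useless singleton. The padding must go the other way: join every vertex of $Y\setminus\iota[X]$ to every other vertex of $Y$. Then:
\begin{itemize}
\item every maximal clique $K$ contains $Y\setminus\iota[X]$, since adding these vertices keeps it a clique;
\item $\iota^{-1}[K]$ is a maximal clique of $G$, since if $\iota^{-1}[K]\cup\{x\}$ were a clique of $G$, then $K\cup\{\iota(x)\}$ would be a clique of the padded graph.
\end{itemize}

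In the $\mathrm{DC}$ case your final remedy is also wrong. A relation relating only elements of $\iota[X]$ is not total on $Y$, so $\mathrm{DC}_Y$ does not apply to it. No remedy is needed, though: under your padding every successor lies in $\iota[X]$, so $\seq{f(n+1)\mid n<\omega}$ is already a chain in $\iota[X]$.

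\textbf{The Vitali step in (7) is not a ZF fact.} You claim the countable additivity needed for the Vitali argument is available in ZF; that is not justified.
\begin{itemize}
\item Finite additivity, monotonicity and translation invariance (which ZF proves) rule out $\mu(V)>0$.
\item Ruling out $\mu(V)=0$ needs $\bigcup_{q\in\Q\cap[-1,1]}(V+q)\supseteq[0,1]$ to be null, i.e.\ countable subadditivity.
\item Its proof picks a small-length cover for each of the countably many translates, an instance of countable choice for reals. ZF does not provide this in general; it does not even prove that countable unions of null sets are null.
\end{itemize}
This is harmless here, because $\mathrm{EQ}_\R$ yields $\mathrm{CC}_\R$. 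You can get it via (3), (2) and (4). Alternatively, take a transversal of the equivalence relation on a copy of $\omega\times\R$ inside $\R$ whose nontrivial classes are the sets $\{n\}\times A_n$. You should invoke this rather than ZF.
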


\proof (\ref{item:yx}) and (\ref{item:wosccqeq}) are straightforward.

\noindent (\ref{item:equnif}) Assume $\mathrm{EQ}_X$ and $|X|^2=|X|$. Then we have $\mathrm{EQ}_{X^2}$. Let $R$ be a binary relation on $X$. Define a binary relation $\equiv_R$ on $X^2$ by
$$
\seq{a, b}\equiv_R\seq{c, d}\Leftrightarrow a=c\land(b=d\lor b, d\in R).
$$
Then $\equiv_R$ is an equivalence relation on $X^2$. Thus by $\mathrm{EQ}_{X^2}$, we have a complete system $S$ of representatives, which essentially gives a uniformization of $R$.

\noindent For a proof of (\ref{item:dccc}) see \cite{andrettanotaro25:_doesdc}.

\noindent (\ref{item:unifwo}) Assume $\mathrm{Unif}_{\mathcal{P}(X)}$ and let
$$
R=\{\seq{A, B}\in(\mathcal{P}(X))^2\mid B\subseteq A\land\text{$B$ is a singleton}\}.
$$
Then a uniformization of $R$ essentially gives a choice function for $\mathcal{P}(X)\setminus\{\emptyset\}$.

\noindent (\ref{item:scult}) Assume $\mathrm{SC}_{\mathcal{P}(X)}$ and let $\mathcal{F}$ be any proper filter on $X$. Let
$$
\mathcal{C}=\{C\in[\mathcal{P}(X)]^{<\omega}\mid X\setminus\bigcap C\notin\mathcal{F}\}.
$$
Apply $\mathrm{SC}_{\mathcal{P}(X)}$ to obtain a maximal $\mathcal{G}\subseteq \mathcal{P}(X)$ such that $[\mathcal{G}]^{<\omega}\subseteq\mathcal{C}$. Then it is easy to see that $\mathcal{G}$ is an ultrafilter on $X$ extending $\mathcal{F}$.

\noindent (\ref{item:eqnonmeas}) Assume $\mathrm{EQ}_\R$. Then we have a right inverse to a surjection from $\R$ to $\omega_1$, which is an injection from $\omega_1$ to $\R$. Also by $\mathrm{EQ}_\R$ we have a right inverse to the canonical surjection from $\R$ to $\R/\Q$, and we can easily arrange it to a Vitali set\footnote{In fact, Shelah proved $2^\omega\geq\omega_1$ implies the existence of Lebesgue non-measurable subsets of $\R$ (see \cite{moore:_ac}).}.\qed

Some results on separating above fragments are known, or easily seen. We review some of them below.

\begin{prop}\label{prop:separation}
\begin{enumerate}[(1)]
\item\label{item:ccnontrivial} $\mathrm{ZF}$ does not imply $\mathrm{CC}_\R$.
\item\label{item:ccdc} $\mathrm{ZF}+\mathrm{CC}_\R$ does not imply $\mathrm{DC}_\R$.
\item\label{item:dcunif} $\mathrm{ZF}+\mathrm{DC}_\R$ does not imply $\mathrm{Unif}_\R$.
\item\label{item:unifeq} (Relative to the consistency of a large cardinal axiom) $\mathrm{ZF}+\mathrm{Unif}_\R$ does not imply $\mathrm{EQ}_\R$.
\item\label{item:wocc} $\mathrm{ZF}+\mathrm{WO}_\R$ does not imply $\mathrm{CC}_{\mathcal{P}(\R)}$.
\end{enumerate}
\end{prop}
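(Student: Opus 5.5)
Proposition~\ref{prop:separation} is a survey of independence results. The plan is to give, for each item, a known model of $\mathrm{ZF}$ (symmetric or inner model) witnessing the failure, and to verify the required fragments there by citing standard facts rather than constructing new models.

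For (\ref{item:ccnontrivial}), I would use Cohen's basic model, or the Feferman model, in which there is an infinite Dedekind-finite set of reals. An infinite Dedekind-finite subset of $\R$ contradicts $\mathrm{CC}_\R$: from a countable sequence of nonempty sets of its $n$-element subsets one could choose and extract an injection of $\omega$. The Feferman model already gives this, and Truss's model (Form 94 in Howard--Rubin) is another standard witness.

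For (\ref{item:ccdc}), I would cite the standard model of $\mathrm{CC}_\R+\neg\mathrm{DC}_\R$ listed in Howard--Rubin, for instance Jensen's model. An alternative is the symmetric model over a tree forcing adding a countably-closed-but-not-DC configuration of reals; there countable choice for reals survives by countable closure of the symmetry filter, while a tree with no branch witnesses the failure of $\mathrm{DC}_\R$. This is the step where I would be most careful to quote a precise reference rather than reprove anything.

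For (\ref{item:dcunif}), I would take Solovay's model $L(\R)$ of the Lévy collapse of an inaccessible. By Proposition~\ref{prop:easy}(\ref{item:eqnonmeas}) together with Proposition~\ref{prop:easy}(\ref{item:equnif}), it is not quite immediate that $\mathrm{Unif}_\R$ fails there. Instead, I would argue directly: in Solovay's model $\mathrm{DC}$ holds, but every set of reals is Lebesgue measurable. Uniformizing the relation $\{(x,y) : x E_0 y\}$ on $2^\omega$ (or $x-y\in\Q$ on $\R$) would yield a Vitali-type selector, which is non-measurable. Hence $\mathrm{Unif}_\R$ fails.

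For (\ref{item:unifeq}), I would use $L(\R)$ under $\mathrm{AD}_\R$-type hypotheses, or more economically a model of $\mathrm{AD}^+$ in which every relation on reals is uniformizable, e.g.\ $L(\R)$ of a model of $\mathrm{AD}_\R$ (Solovay's derived model). This uses the consistency of sufficiently many Woodin cardinals. There $\mathrm{Unif}_\R$ holds, and $\mathrm{AD}$ gives that every set of reals is measurable and $\omega_1\not\leq 2^\omega$, so $\mathrm{EQ}_\R$ fails by Proposition~\ref{prop:easy}(\ref{item:eqnonmeas}). The main obstacle is here: pinning down the correct large-cardinal model and reference in which full uniformization (rather than only for, say, projective sets) holds.

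For (\ref{item:wocc}), I would take a symmetric model adding $\omega_1$-many (or countably many) Cohen subsets of $\R$ with $\R$ unchanged and well-ordered, e.g.\ a model obtained over $L$ by a countably-closed-beyond-$\omega$ symmetric extension adding a countable family of pairs of sets of reals with no choice function. There the reals are well-orderable since none are added, but a countable family of nonempty subsets of $\mathcal{P}(\R)$ has no choice function, so $\mathrm{CC}_{\mathcal{P}(\R)}$ fails. Concretely, I would adapt the second Cohen model one level up, replacing subsets of $\omega$ by subsets of $\omega_1$ added by $\omega_1$-closed forcing, and cite Jech's transfer-type arguments.
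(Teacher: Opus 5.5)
Your items (1), (2) and (5) match the paper: Cohen's model, Jensen's model, and a countably closed symmetric extension over a $\mathrm{CH}$ ground that adds no reals but adds a countable family of sets of subsets of $\omega_1$ with no choice function.

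\textbf{Item (3) has a genuine gap.} Uniformizing the Vitali relation, or $E_0$, does not produce a selector. Any reflexive relation is uniformized by the identity $x\mapsto x$, and a uniformizing function is in general not constant on classes. Choosing one point per class is $\mathrm{EQ}_\R$, not $\mathrm{Unif}_\R$, and that difference is exactly what item (4) asserts. In fact no $\mathrm{ZF}$ argument can derive a non-measurable set from $\mathrm{Unif}_\R$ (if $\mathrm{AD}_\R$ is consistent). In a model of $\mathrm{AD}_\R$, the kind you invoke in (4), $\mathrm{Unif}_\R$ holds and every set of reals is measurable. So the failure of $\mathrm{Unif}_\R$ in Solovay's $L(\R)$ must use the structure of $L(\R)$. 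The paper cites Solovay's theorem that in $L(\R)$, $\mathrm{Unif}_\R$ is equivalent to $\mathrm{AC}$. A direct version runs as follows.
\begin{itemize}
\item In $L(\R)$ let $R=\{(x,y)\mid y\notin\mathrm{OD}_x\}$, with $\mathrm{OD}_x$ computed in $L(\R)$.
\item If $f\in L(\R)$ uniformizes $R$, then $f$ is ordinal definable from some real $z$, as every element of $L(\R)$ is. So $z\notin\operatorname{dom}R$, since otherwise $f(z)$ would be $\mathrm{OD}_z$.
\item Hence every real is $\mathrm{OD}_z$, and $\R$ is well-orderable in $L(\R)$.
\item That well-ordering gives a Vitali set in $L(\R)$, contradicting measurability in Solovay's model.
\end{itemize}

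\textbf{Item (4) has a related mistake.} $L(\R)$ of a model of $\mathrm{AD}_\R$ satisfies $\mathrm{AD}$ but \emph{not} $\mathrm{Unif}_\R$. By the argument above, $\mathrm{Unif}_\R$ in $L(\R)$ would well-order $\R$, which contradicts $\mathrm{AD}$. You must use a model of $\mathrm{ZF}+\mathrm{AD}_\R$ itself, as the paper does. There $\mathrm{Unif}_\R$ is immediate. Consider the game on reals where I plays $x$, II plays $y$, and II wins iff $x\notin\operatorname{dom}R$ or $(x,y)\in R$. Player I has no winning strategy, so II has one, and it is a uniformization of $R$. This resolves what you flagged as the main obstacle. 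The rest of your argument for (4) is the paper's: $\mathrm{AD}$ gives measurability and $\omega_1\not\leq 2^\omega$, hence $\neg\mathrm{EQ}_\R$.
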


\proof (\ref{item:ccnontrivial}) is due to Cohen \cite{cohen6364:_indep}. (\ref{item:ccdc}) is due to Jensen \cite{jensen67:_consistency} (see also Friedman, Gitman and Kanovei \cite{friedman19:_models}).

(\ref{item:dcunif}) On the one hand, assuming $\mathrm{ZF}+\mathrm{DC}_\R$ in the universe, it is easy to see that $\mathrm{DC}_\R$ holds in $L(\R)$. On the other hand, Solovay \cite{solovay:_independence} proved (under $\mathrm{ZF}$) that $\mathrm{Unif}_\R$ is equivalent to $\mathrm{AC}$ in $L(\R)$. There are several models of $\mathrm{ZFC}$ where $\mathrm{AC}$ fails in $L(\R)$ (including Solovay's model (\cite{solovay:_lebesgue}) obtained by collapsing an inaccessible cardinal to $\omega_1$), and thus $\mathrm{DC}_\R+\neg\mathrm{Unif}_\R$ holds in $L(\R)$. 

(\ref{item:unifeq}) $\mathrm{Unif}_\R$ is a consequence of the axiom $\mathrm{AD}_\R$, which is a strengthening of the axiom of determinacy ($\mathrm{AD}$). It is known that $\mathrm{ZF}+\mathrm{AD}_\R$ is consistent relative to a large cardinal axiom. Since $\mathrm{EQ}_\R$ implies the existence of a Lebesgue non-measurable subset of $\R$, which is inconsistent with $\mathrm{AD}$.

(\ref{item:wocc}) This separation can be observed by a standard use of a symmetric model (for symmetric models see \cite{jech:_choice}). Suppose $\mathrm{ZFC}+\mathrm{CH}$ in the ground model and let $\mathbb{P}$ be the poset consisting of countable partial functions from $\omega\times\omega_1\times\omega_1$ to $2$ ordered by reverse inclusion. Let $\mathcal{G}$ be the group consisting of $\omega$-sequences $\pi=\seq{\pi_n\mid n<\omega}$ of bijections from $\omega_1$ to $\omega_1$. For $\pi$, $\sigma\in\mathcal{G}$ we define $\pi\cdot\sigma=\seq{\pi_n\circ\sigma_n\mid n<\omega}$. Define an action of $\mathcal{G}$ on $\mathbb{P}$ as follows: For $\pi\in\mathcal{G}$ and $p\in\mathbb{P}$ let
$$
\pi(p)=\{\seq{n, \pi_n(\alpha), \beta, i}\in\omega\times\omega_1\times\omega_1\times 2\mid\seq{n, \alpha, \beta, i}\in p\}.
$$
Let $\mathcal{F}$ be the normal filter of subgroups of $\mathcal{G}$ generated by subgroups
$$
\mathcal{G}_n=\{\pi\in\mathcal{G}\mid\pi_n=\mathrm{id}\}\ (\text{for $n<\omega$}).
$$
Now let $N$ be a symmetric extension of the ground model by $\mathbb{P}$, $\mathcal{G}$ and $\mathcal{F}$. Since no new real is added, $\R$ is well-ordered and $2^\omega=\omega_1$ holds in $N$. On the other hand, it is easy to see that this extension naturally adds a countable family of sets of subsets of $\omega_1$ which has no choice function in $N$. Since $2^\omega=\omega_1$ it implies that $\mathrm{CC}_{\mathcal{P}(\R)}$ fails in $N$.\qed

Now we briefly mention sets encodable by reals. We make distinction between codings by which each object has a unique real code and those by which each object may have several real codes. Since we mostly work without $\mathrm{AC}$ in this paper, this distinction makes sense.

\begin{dfn}\label{dfn:encodable}
\begin{enumerate}[(1)]
\item We say a set $A$ is {\it $\R$-encodable\/} if $|A|\leq 2^\omega$, that is, there exists an injection  $A\to\R$.
\item We say a set $A$ is {\it $\R$-semiencodable\/} if there exists a surjection $\R\to A$.
\end{enumerate}
\end{dfn}

\begin{prop}\label{prop:examples}
\begin{enumerate}[(1)]
\item Every nonempty $\R$-encodable set is $\R$-semiencodable.
\item Under $\mathrm{ZF}$, the following sets are $\R$-encodable for each $n<\omega$:
\begin{itemize}
\item $\R^n$.
\item The set of finte subsets of $\R^n$.
\item The set of open or closed subsets of $\R^n$.
\item The set of affine subspaces of $\R^n$.
\item The set of convex subsets of $\R^n$ of dimension $\leq 1$.
\item The set of $\omega$-sequences of points in $\R^n$.
\end{itemize}
\item Under $\mathrm{ZF}$, the following sets are $\R$-semiencodable:
\begin{itemize}
\item The set of countable subsets of $\R^n$.
\item The set of $F_\sigma$ or $G_\delta$ subsets of $\R^n$.
\item $\omega_1$.
\end{itemize}
\end{enumerate}
\end{prop}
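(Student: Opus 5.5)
The plan is to fix, once and for all and without choice, explicit injections and surjections built from the standard coding machinery of $\mathrm{ZF}$. Item (1) is immediate. If $A\neq\emptyset$ and $i:A\to\R$ is injective, I fix some $a_0\in A$ (a single choice, which is allowed). I then send $x\in\operatorname{ran} i$ to $i^{-1}(x)$ and every other real to $a_0$; this map is a surjection $\R\to A$. For the remaining items I use the facts that $|\R^n|=|\R|$, $|\R^\omega|=|\R|$ and $|\R\times\R|=|\R|$, all witnessed by explicit bijections in $\mathrm{ZF}$ via interleaving binary or continued-fraction expansions.

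For (2), I take the items in turn.
\begin{itemize}
\item $\R^n$ is immediate from $|\R^n|=|\R|$.
\item For finite subsets of $\R^n$, I first list each finite set canonically by the lexicographic order on $\R^n$. This gives an injection into $\bigcup_k(\R^n)^k$, and that union injects into $\omega\times\R^\omega$ and hence into $\R$.
\item For open sets I use a fixed enumeration $\seq{B_k\mid k<\omega}$ of the open balls with rational centers and radii. The map $U\mapsto\{k\mid B_k\subseteq U\}\in\mathcal{P}(\omega)$ is injective, since $U$ is the union of these balls. Closed sets are then handled by complements.
\item For an affine subspace $W$, I take the point of $W$ nearest the origin together with the orthogonal projection matrix onto the direction space of $W$. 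Both are canonical, which gives an injection into $\R^n\times\R^{n^2}$.
\item A convex set of dimension $\le 1$ is empty, a point, or an interval on a line. I code it by its affine hull (as in the previous item) together with its two endpoints in $\R\cup\{\pm\infty\}$, measured in canonical coordinates along the line, plus two bits recording whether each endpoint is included.
\item For $\omega$-sequences I use $(\R^n)^\omega\cong\R^\omega\cong\R$.
\end{itemize}

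For (3), each item is the image of an explicit surjection.
\begin{itemize}
\item For countable subsets of $\R^n$, the map sending a sequence $\seq{x_k}$ to its range is onto the nonempty countable sets. I extend this to cover $\emptyset$ by sending one reserved code (say, sequences whose first coordinate lies in a fixed half-space, via a splitting of $\R$) to $\emptyset$.
\item For $F_\sigma$ sets, I map an $\omega$-sequence of codes of closed sets (reals, by (2)) to their union. Sequences of reals are coded by reals, and invalid codes are sent to $\emptyset$. The case of $G_\delta$ sets is symmetric, using intersections of open sets.
\item For $\omega_1$, I send a real coding a relation on $\omega$ to its order type if that relation is a well-order of $\omega$, and to $0$ otherwise. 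This is onto $\omega_1$ because every countable ordinal is the order type of some well-ordering of $\omega$, and exhibiting one such well-ordering for a given $\alpha<\omega_1$ requires only the existential statement that a bijection $\omega\to\alpha$ exists.
\end{itemize}

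The main obstacle is avoiding hidden choice in the coding. The points to check are that every map is given by a uniform definition, namely canonical representatives (lex order, orthogonal projections, nearest points) rather than chosen ones, and that the surjections in (3) really are onto without any choice being made. The countable-union case does not arise, because each element is reached by an existential witness rather than by a choice function.
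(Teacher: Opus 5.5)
The paper states this proposition without proof, treating the codings as standard. Your plan is the right one: canonical injections for (2), and for (3) surjections whose ontoness needs only an existential witness for each target. Almost every item goes through as you describe.

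One step, however, is false as written. In the $\omega_1$ item you assert that every countable ordinal is the order type of some well-ordering of $\omega$. A well-ordering of the infinite set $\omega$ always has infinite order type. So your map reaches $0$ (through the default value) and every infinite $\alpha<\omega_1$, but misses $1,2,3,\dots$, and is therefore not onto $\omega_1$. The repair is routine: let a real code a pair $\seq{S,R}$ with $S\subseteq\omega$ and $R\subseteq S\times S$. Send it to the order type of $R$ whenever $R$ well-orders $S$, and to $0$ otherwise; the finite ordinals are then reached by finite $S$. Alternatively, keep your map and compose it with the function sending an infinite $\beta$ to the unique $\alpha$ with $\omega+\alpha=\beta$.

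A smaller point concerns reaching $\emptyset$ in the countable-subsets item. Suppose you literally send every sequence whose first term lies in a fixed half-space $H$ of $\R^n$ to $\emptyset$. Then no nonempty countable subset of $H$ is hit any longer, for instance a singleton $\{\mathbf{x}\}$ with $\mathbf{x}\in H$, since all of its enumerations start in $H$. The tag has to live in the code, separately from the data. For example, fix a bijection $\R\setminus\{0\}\to(\R^n)^\omega$, decode each nonzero real to the range of the corresponding sequence, and send $0$ to $\emptyset$.

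With these two repairs your argument is a complete $\mathrm{ZF}$ proof. In particular, your observation that the codes in (2) are unique is exactly what makes the $F_\sigma$ and $G_\delta$ items work: an $\omega$-sequence of closed or open sets then yields an $\omega$-sequence of reals without any choice.
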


Using the notions of encodability and semiencodability, we can rephrase some fragments of $\mathrm{AC}$.

\begin{prop}
\begin{enumerate}[(1)]
\item $\mathrm{CC}_\R$ is equivalent to the statement that for any sequence $\seq{A_n\mid n<\omega}$ of nonempty subsets of an $\R$-semiencodable set $A$ there exists $f:\omega\to A$ such that $f(n)\in A_n$ for every $n<\omega$.
\item $\mathrm{Unif}_\R$ is equivalnet to the statement that for any $\R$-encodable set $I$ and any indexed family $\seq{A_i\mid i\in I}$ of nonempty subsets of an $\R$-semiencodable set $A$ there exists $f:I\to A$ such that $f(i)\in A_i$ for every $i\in I$.
\item $\mathrm{EQ}_\R$ is equivalent to the statement that for any $\R$-semiencodable set $I$ and any indexed family $\seq{A_i\mid
i\in I}$ of nonempty subsets of an $\R$-semiencodable set $A$ there exists $f:I\to A$ such that $f(i)\in A_i$ for every $i\in I$.\end{enumerate}
\end{prop}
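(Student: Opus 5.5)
Each of the three items says that a choice principle stated for $\R$ lifts to the same principle stated for families of subsets of an $\R$-semiencodable set $A$, indexed by $\omega$, by an $\R$-encodable set, or by an $\R$-semiencodable set respectively. In each case the implication from the general form back to the original principle is immediate: $\R$ is both $\R$-encodable and $\R$-semiencodable, and $\omega$ injects into $\R$. For (2) and (3) one also rephrases the original principle. For $\mathrm{Unif}_\R$, a relation $R\subseteq\R^2$ restricted to its domain $I=\operatorname{dom}R$ is exactly an $I$-indexed family of nonempty sets $R_x=\{y\mid xRy\}$. For $\mathrm{EQ}_\R$, an equivalence relation $E$ gives the family of classes indexed by $I=\R/E$, which is $\R$-semiencodable via the quotient map. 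A choice function $f:\R/E\to\R$ with $f([x])\in[x]$ then has range a complete system of representatives. So the work is in the forward directions, and I will handle all three uniformly by a pull-back through a surjection.

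Fix a surjection $s:\R\to A$ witnessing $\R$-semiencodability, and for each index $i$ let $B_i=s^{-1}[A_i]\subseteq\R$, which is nonempty because $A_i$ is nonempty and $s$ is onto. A choice function $g$ for $\seq{B_i\mid i\in I}$ composed with $s$ gives $f=s\circ g$, and $f(i)\in A_i$. So it suffices to choose from the family of nonempty subsets of $\R$ indexed by $I$.
\begin{enumerate}[(1)]
\item For $I=\omega$, this is literally $\mathrm{CC}_\R$.
\item For $I$ $\R$-encodable, fix an injection $e:I\to\R$ and define the relation $R=\{\seq{e(i),y}\mid i\in I,\ y\in B_i\}\subseteq\R^2$. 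A uniformization $u$ of $R$ yields $g(i)=u(e(i))\in B_i$.
\item For $I$ $\R$-semiencodable, fix a surjection $t:\R\to I$. We cannot directly choose along fibres of $t$, so I will use an equivalence relation on $\R^2$ instead (legitimate since $|\R^2|=|\R|$, so $\mathrm{EQ}_\R$ gives $\mathrm{EQ}_{\R^2}$ as in Proposition~\ref{prop:easy}). Define on $D=\{\seq{x,y}\mid y\in B_{t(x)}\}$ the relation $\seq{x,y}\sim\seq{x',y'}$ iff $t(x)=t(x')$, and extend it to all of $\R^2$ by making points outside $D$ singleton classes. Take a complete system of representatives $S$. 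For $i\in I$, the $\sim$-class $\{\seq{x,y}\in D\mid t(x)=i\}$ is nonempty, and its unique element of $S$ is some $\seq{x,y}$. Set $g(i)=y\in B_i$. This is definable from $S$ without further choice.
\end{enumerate}

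The main obstacle is item (3). The naive approach would pick representatives of the fibres of $t$ and then choose within them, which would need two rounds of choice. The device of coding pairs $\seq{x,y}$ and collapsing along $t(x)$ is what makes a single application of $\mathrm{EQ}$ suffice. I also need to confirm that no hidden choice is used in fixing $s$, $e$, $t$. Each is a single existential instantiation, which is fine in $\mathrm{ZF}$.
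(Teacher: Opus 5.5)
The paper states this proposition without proof, so there is nothing to compare against line by line. Your argument is correct and is the natural verification.

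The reverse directions are fine. So are the pull-back of each $A_i$ along a fixed surjection $s:\R\to A$ and the encoding of an $\R$-encodable index set through an injection into the first coordinate of a relation on $\R$. For (3), your single equivalence relation on $\R^2$ identifies pairs $\seq{x,y}\in D$ with the same value $t(x)$. This is essentially the same device the paper uses to prove $\mathrm{EQ}_X\Rightarrow\mathrm{Unif}_X$ in Proposition~\ref{prop:easy}(\ref{item:equnif}), where pairs in $X^2$ are collapsed along the first coordinate.

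Your claim that the two-step route in (3) would be problematic is overstated. Apply $\mathrm{EQ}_\R$ once to the fibres of $t$ to get a right inverse of $t$, which is an injection $I\to\R$. Then (2), together with $\mathrm{EQ}_\R\Rightarrow\mathrm{Unif}_\R$, finishes the argument. Two successive applications of a principle involve no dependent choices, so your one-shot coding is a convenience rather than a necessity.
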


%% file: convex_basics.tex
In this section we observe some basic facts about $\mathrm{MCV}$ and $\mathrm{MCV}(V)$ for particular $V$'s.

At first we have the following observation, roughly saying that $\mathrm{MCV}(V)$ for larger spaces are stronger than those for smaller spaces. Note that every image or inverse image of a convex set by an $\R$-linear map is convex.
\begin{prop}\label{prop:vw}
Let $V$, $W$ be $\R$-vector spaces.
\begin{enumerate}[(1)]
\item\label{item:inj} If there exists an injective $\R$-linear map $\iota:V\to W$, then $\mathrm{MCV}(W)$ implies $\mathrm{MCV}(V)$.
\item\label{item:surj} If there exists a surjective $\R$-linear map $\sigma:V\to W$, then $\mathrm{MCV}(V)$ implies $\mathrm{MCV}(W)$.
\end{enumerate}
\end{prop}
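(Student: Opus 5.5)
For (\ref{item:inj}) we pull back along $\iota$, and for (\ref{item:surj}) we push forward along $\sigma$, in each case using that linear images and preimages of convex sets are convex.

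\emph{Proof of (\ref{item:inj}).} Let $X\subseteq V$ and put $Y=\iota[X]\subseteq W$. By $\mathrm{MCV}(W)$ choose a maximal convex $D\subseteq Y$, and set $C=\iota^{-1}[D]$. Then $C$ is convex and $C\subseteq\iota^{-1}[Y]=X$, the last equality by injectivity. For maximality, let $C'$ be convex with $C\subseteq C'\subseteq X$. Then $\iota[C']$ is convex and $D=\iota[C]\subseteq\iota[C']\subseteq Y$; here $D=\iota[C]$ holds because $D\subseteq Y\subseteq\operatorname{ran}\iota$. Maximality of $D$ gives $\iota[C']=D$, and injectivity gives $C'=\iota^{-1}[D]=C$.

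\emph{Proof of (\ref{item:surj}).} Let $Y\subseteq W$ and put $X=\sigma^{-1}[Y]$. By $\mathrm{MCV}(V)$ choose a maximal convex $C\subseteq X$, and set $D=\sigma[C]$, which is convex with $D\subseteq Y$. For maximality, let $D'$ be convex with $D\subseteq D'\subseteq Y$. Then $\sigma^{-1}[D']$ is convex, contains $C$, and is contained in $X$. Maximality of $C$ forces $C=\sigma^{-1}[D']$, so $D=\sigma[C]=\sigma[\sigma^{-1}[D']]=D'$ by surjectivity.

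No choice is used anywhere: each step is an explicit image or preimage under a fixed linear map. The only point needing care is keeping track of exactly where injectivity and surjectivity enter. Injectivity gives $\iota^{-1}[\iota[X]]=X$ and $\iota^{-1}[\iota[C']]=C'$. Surjectivity gives $\sigma[\sigma^{-1}[D']]=D'$. Convexity of images and preimages is immediate from linearity, as already noted before the statement.
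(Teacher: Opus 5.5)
Your proof is correct and is essentially the paper's own argument. For (1) you pull back a maximal convex subset of $\iota[X]$ along $\iota$, and for (2) you push forward a maximal convex subset of $\sigma^{-1}[Y]$ along $\sigma$, using injectivity and surjectivity exactly where you indicate. You also state explicitly the step $D=\iota[\iota^{-1}[D]]$ (since $D\subseteq\operatorname{ran}\iota$), which the paper leaves implicit.
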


\proof (\ref{item:inj}) Suppose $\iota:V\to W$ is an injective $\R$-linear map and $\mathrm{MCV}(W)$ holds. Let $X\subseteq V$ be arbitrary. Then by $\mathrm{MCV}(W)$ there exists a maximal convex subset $C$ of $\iota''X\subseteq W$. Then let $D=\iota^{-1}C$. $D$ is convex, and since $\iota$ is injective, $D\subseteq X$ holds. If $D\subseteq D'\subseteq X$ and $D'$ is convex, then $C\subseteq\iota''D'\subseteq\iota''X$ and $\iota''D'$ is convex, and thus by maximality  $\iota''D'=C$. Since $\iota$ is injective $D'=D$ holds. This shows that $D$ is a maximal convex subset of $X$.

(\ref{item:surj}) Suppose $\sigma:V\to W$ is a surjective $\R$-linear map and $\mathrm{MCV}(V)$ holds. Let $X\subseteq W$ be arbitrary. Then by $\mathrm{MCV}(V)$ there exists a maximal convex subset $C$ of $\iota^{-1}X\subseteq V$. Then let $D=\iota''C$. $D$ is convex, and $D\subseteq X$ holds. If $D\subseteq D'\subseteq X$ and $D'$ is convex, then $C\subseteq\iota^{-1}D'\subseteq\iota^{-1}X$ and $\iota^{-1}D$ is convex, and thus by maximality $\sigma^{-1}D'=C$. Since $\sigma$ is surjective $D'=D$ holds. This shows that $D$ is a maximal convex subset of $X$.\qed

Now we give another easy observation about $\mathrm{MCV}(V)$.

\begin{prop}\label{prop:well}
Suppose $V$ is an $\R$-vector space. Then $\mathrm{WO}_V$ implies $\mathrm{MCV}(V)$.
\end{prop}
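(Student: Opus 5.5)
Fix a well-ordering $\prec$ of $V$, given by $\mathrm{WO}_V$, and a subset $X\subseteq V$. The idea is to replace the Zorn's Lemma argument by a transfinite greedy construction along $\prec$. Enumerate $V$ as $\seq{\mathbf{v}_\alpha\mid\alpha<\lambda}$ for some ordinal $\lambda$, following $\prec$; this enumeration is definable from $\prec$, so no further choice is used. If $X=\emptyset$, then $\emptyset$ is the maximal convex subset and we are done, so assume $X\neq\emptyset$.

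Define $C_\alpha\subseteq X$ for $\alpha\le\lambda$ by transfinite recursion. Put $C_0=\emptyset$. At successor stages let
$$
C_{\alpha+1}=\begin{cases}\operatorname{conv}(C_\alpha\cup\{\mathbf{v}_\alpha\}) & \text{if }\operatorname{conv}(C_\alpha\cup\{\mathbf{v}_\alpha\})\subseteq X,\\ C_\alpha & \text{otherwise,}\end{cases}
$$
and at limit stages let $C_\alpha=\bigcup_{\beta<\alpha}C_\beta$. Since each step is uniquely determined, this recursion uses no choice. By induction, every $C_\alpha$ is a convex subset of $X$ and the sequence is $\subseteq$-increasing. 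At successor stages convexity and $C_{\alpha+1}\subseteq X$ are immediate from the case distinction. At limit stages, a union of a $\subseteq$-chain of convex sets is convex, because any two points of the union already lie in a common $C_\beta$.

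Let $C=C_\lambda$. To check maximality, suppose $C\subseteq D\subseteq X$ with $D$ convex, and let $\mathbf{v}_\alpha\in D$. Then $C_\alpha\cup\{\mathbf{v}_\alpha\}\subseteq D$, so $\operatorname{conv}(C_\alpha\cup\{\mathbf{v}_\alpha\})\subseteq D\subseteq X$. Hence the first case of the definition applied at stage $\alpha$, and $\mathbf{v}_\alpha\in C_{\alpha+1}\subseteq C$. It follows that $D=C$.

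The only point that needs any care is the limit step, where one must see that the union of the chain is still convex and contained in $X$. This is the routine observation above, so there is no real obstacle. The essential point is that the well-ordering makes every choice in the recursion canonical.
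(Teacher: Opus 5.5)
Your proof is correct and is essentially the paper's argument: the paper runs the same greedy transfinite recursion, setting $C_{\gamma+1}=\operatorname{conv}(C_\gamma\cup\{\mathbf{p}_\gamma\})$ whenever this stays inside $X$ and taking unions at limits. The only differences are that the paper enumerates $X$ rather than all of $V$, which changes nothing since points outside $X$ always fail the test, and that you spell out the maximality check the paper leaves as ``easy to see.''
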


\proof Suppose $V$ is a well-ordered $\R$-vector space and $X\subseteq V$. Pick a well-ordered enumeration $\{\mathbf{p}_\gamma\mid\gamma<\lambda\}$ of $X$. We will define $\seq{C_\gamma\mid\gamma\leq\lambda}$ by induction on $\gamma$ as follows: Let $C_0=\emptyset$. For $\gamma<\lambda$ let
$$
\begin{cases}
\operatorname{conv}_V(C_\gamma\cup\{\mathbf{p}_\gamma\}) &\text{if $\operatorname{conv}_V(C_\gamma\cup\{\mathbf{p}_\gamma\})\subseteq X$,}\\
C_\gamma &\text{otherwise.}
\end{cases}
$$
And let $C_\gamma=\bigcup_{\xi<\gamma}C_\xi$ for limit $\gamma\leq\lambda$. Then it is easy to see that $C_\lambda$ is a maximal convex subset of $X$.\qed

\begin{cor}\label{cor:wor}
$\mathrm{WO}_{\R}$ implies $\mathrm{MCV}(\kappa)$ for every well-orderable cardinal $\kappa$.\qed
\end{cor}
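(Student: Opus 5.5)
The plan is to reduce the corollary to Proposition~\ref{prop:well}. Let $\kappa$ be a well-orderable cardinal and let $V$ be an $\R$-vector space with a basis of cardinality $\kappa$. It suffices to show that $\mathrm{WO}_\R$ implies $\mathrm{WO}_V$; Proposition~\ref{prop:well} then gives $\mathrm{MCV}(V)$, that is, $\mathrm{MCV}(\kappa)$.

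First I reduce to a concrete model of $V$. Fix a bijection between $\kappa$ and the given basis. It induces a linear isomorphism $V\cong\R_\kappa$, sending each basis vector to the corresponding $\mathbf{b}_i$. So it is enough to well-order $\R_\kappa$. Alternatively one can invoke Proposition~\ref{prop:vw}(\ref{item:inj}) in both directions along this isomorphism.

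Next I well-order $\R_\kappa$ explicitly, using a fixed well-order $\prec$ of $\R$ and the ordinal well-order of $\kappa$. Each $\mathbf{p}\in\R_\kappa$ is determined by its finite support $s=\{i<\kappa\mid \mathbf{p}(i)\neq 0\}$, listed increasingly as $i_0<\dots<i_{n-1}$, together with the finite sequence $\seq{\mathbf{p}(i_0),\dots,\mathbf{p}(i_{n-1})}\in\R^n$. This gives an injection of $\R_\kappa$ into
$$\bigcup_{n<\omega}\bigl([\kappa]^n\times\R^n\bigr).$$
The finite subsets of an ordinal are canonically well-ordered, for instance by comparing their increasing enumerations lexicographically after the length. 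The set $\R^n$ is well-ordered lexicographically via $\prec$. The union over $n$ is then ordered first by $n$, then lexicographically. Every step is a definable construction from the fixed data, so no choice is used, and we obtain a well-order of $\R_\kappa$.

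The only point requiring care, and it is minor, is to make sure that no hidden choice enters. The isomorphism $V\cong\R_\kappa$ is given by the bijection witnessing that the basis has cardinality $\kappa$. The well-orders of $[\kappa]^{<\omega}$ and of $\bigcup_n\R^n$ are uniformly definable from $\prec$ and the ordinal order, and do not require choosing well-orders of each piece separately. With these checks in place, Proposition~\ref{prop:well} completes the proof.
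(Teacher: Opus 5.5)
Your proposal is correct and matches the paper's argument. The paper states the corollary as immediate from Proposition~\ref{prop:well}. The only unstated step is that $\R_\kappa$ is well-orderable when $\R$ and $\kappa$ are, and your choice-free injection of $\R_\kappa$ into $\bigcup_{n<\omega}([\kappa]^n\times\R^n)$, ordered first by $n$ and then lexicographically, supplies exactly that step.
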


%% file: convex_generalchoice.tex
\smallskip
Here we give a useful lemma to design an appropriate subset of an $\R$-vector space so that a maximal convex subset of it gives a choice function for a given family of sets.

\begin{lma}\label{lma:generalchoice}
Let $V$ be an $\R$-vector space. Suppose $X\subseteq V$ is of the form
$$
X=D\cup\bigcup_{i\in I}B_i
$$
where $D$ and $B_i$ ($i\in I$) are nonempty pairwise disjoint sets and satisfies the following:
\begin{enumerate}[(1)]
\item\label{item:bd} For every $\mathbf{p}\in X$ and $\mathbf{q}\in D$, $(\mathbf{p}, \mathbf{q})\subseteq D$ holds.
\item\label{item:sameb} For every two distinct $\mathbf{p}$ and $\mathbf{q}\in B_i$ for $i\in I$, $(\mathbf{p}, \mathbf{q})\nsubseteq X$ holds.
\item\label{item:distb} For every $\mathbf{p}\in B_i$ and $\mathbf{q}\in B_j$ for distinct $i$, $j\in I$, $(\mathbf{p}, \mathbf{q})\subseteq D$ holds.
\end{enumerate}
Then whenever $W$ is a maximal convex subset of $X$, then $D\subseteq W$ and $W\cap B_i$ contains exactly one point for every $i\in I$.
\end{lma}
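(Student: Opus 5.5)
The plan is to use maximality of $W$ three times. Each time we exhibit a convex set $W'$ with $W\subseteq W'\subseteq X$, and conclude $W'=W$. Before that, we extract the purely local fact that condition (\ref{item:sameb}) gives directly.

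\emph{Step 1: $|W\cap B_i|\le 1$.} If $\mathbf{p}\ne\mathbf{q}$ both lie in $W\cap B_i$, then convexity of $W$ gives $(\mathbf{p},\mathbf{q})\subseteq W\subseteq X$. This contradicts (\ref{item:sameb}).

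\emph{Step 2: $D\subseteq W$.} Put $W'=W\cup D$; we check that it is convex. For $\mathbf{p},\mathbf{q}\in W$ the segment lies in $W$. If one endpoint, say $\mathbf{q}$, lies in $D$ and the other $\mathbf{p}$ lies in $W'\subseteq X$, then (\ref{item:bd}) gives $(\mathbf{p},\mathbf{q})\subseteq D\subseteq W'$. So $W'$ is a convex subset of $X$ containing $W$, and maximality yields $W'=W$, i.e.\ $D\subseteq W$.

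\emph{Step 3: $W\cap B_i\neq\emptyset$ for each $i$.} Suppose $W\cap B_i=\emptyset$ for some $i$. Pick $\mathbf{r}\in B_i$; this is a single choice from a nonempty set, which is fine in $\mathrm{ZF}$, as we are arguing by contradiction. Put $W'=W\cup\{\mathbf{r}\}$ and check convexity. The only new segments are $(\mathbf{r},\mathbf{p})$ with $\mathbf{p}\in W$. Since $W\subseteq X=D\cup\bigcup_j B_j$ and $W\cap B_i=\emptyset$, either $\mathbf{p}\in D$ or $\mathbf{p}\in B_j$ for some $j\ne i$. In the first case (\ref{item:bd}) gives $(\mathbf{r},\mathbf{p})\subseteq D$. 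In the second case (\ref{item:distb}) gives $(\mathbf{r},\mathbf{p})\subseteq D$. In both cases the segment lies in $D\subseteq W$ by Step 2. Thus $W'\subseteq X$ is convex and strictly larger than $W$, contradicting maximality.

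Step 3 is the only place where the order of the steps matters, since it relies on Step 2 to absorb the segments into $W$. Beyond that, I expect no real obstacle: the disjointness hypotheses are needed only to guarantee that the case split in Step 3 is exhaustive, and pairwise disjointness makes ``exactly one point'' meaningful.
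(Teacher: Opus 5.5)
Your proof is correct and takes essentially the same approach as the paper. You show $D\subseteq W$ by maximality applied to $W\cup D$ via (1), get at most one point of each $B_i$ from (2), and get at least one point by maximality applied to $W\cup\{\mathbf{r}\}$, using (1), (3) and the already established inclusion $D\subseteq W$.
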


\proof Suppose $W$ is a maximal convex subset of $X$. (\ref{item:bd}) assures that $W\cup D$ is convex. Thus by maximality $D\subseteq W$ holds. For each $i\in I$, by (\ref{item:sameb}), $W$ cannot contain two distinct points of $B_i$. On the other hand, suppose $W\cap B_i$ is empty for some $i\in I$. Pick $\mathbf{p}\in B_i$. Then by (\ref{item:bd}) and (\ref{item:distb}),  for every $\mathbf{q}\in W$ ite holds that $(\mathbf{p}, \mathbf{q})\subseteq D\subseteq W$, and thus $W\cup\{\mathbf{p}\}$ is convex, which contradicts the maximality of $W$. Therefore $|W\cap B_i|=1$ for every $i\in I$.\qed

%% file: convex_fullmcv.tex
\smallskip
As a first application of Lemma \ref{lma:generalchoice} we have the following:

\begin{thm}\label{thm:mcvac}
$\mathrm{MCV}$ implies $\mathrm{AC}$ (and therefore $\mathrm{MCV}$ is equivalent to $\mathrm{AC}$).
\end{thm}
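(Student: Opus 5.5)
To derive $\mathrm{AC}$ from $\mathrm{MCV}$, I will show that every family $\seq{A_i\mid i\in I}$ of nonempty pairwise disjoint sets has a choice function. This suffices: given an arbitrary family $\seq{A_i\mid i\in I}$ of nonempty sets, the sets $A'_i=\{i\}\times A_i$ are pairwise disjoint, and a choice function for them yields one for the original family. The plan is to find an $\R$-vector space $V$ and a set $X\subseteq V$ of the shape in Lemma \ref{lma:generalchoice}, with $B_i$ in bijection with $A_i$, and then read off the choice function from a maximal convex subset $W$ supplied by $\mathrm{MCV}(V)$.

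Let $S=\bigcup_{i\in I}A_i$ and $V=\R_S$, with basis $\{\mathbf{b}_a\mid a\in S\}$. Put $B_i=\{\mathbf{b}_a\mid a\in A_i\}$, so that $a\mapsto \mathbf{b}_a$ is a bijection $A_i\to B_i$ and the $B_i$ are pairwise disjoint. For $D$, take the points that are "genuinely inside" the simplex spanned by the basis but not on any edge within a single block:
$$
D=\{\mathbf{p}\in\R_S\mid \mathbf{p}(a)\geq 0\text{ for all }a,\ \textstyle\sum_a\mathbf{p}(a)=1,\ \text{the support of }\mathbf{p}\text{ meets at least two }A_i\}.
$$
Then $D$ is disjoint from every $B_i$, since each $\mathbf{b}_a$ has support $\{a\}$. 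Set $X=D\cup\bigcup_i B_i$. Nonemptiness of $D$ needs $|I|\ge2$; the cases $|I|\le1$ are trivial, as is the case where some $B_i$ is a singleton.

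Next I verify the three hypotheses of Lemma \ref{lma:generalchoice}. For (\ref{item:bd}), take $\mathbf{p}\in X$ and $\mathbf{q}\in D$. For $0<t<1$, the point $t\mathbf{p}+(1-t)\mathbf{q}$ is a nonnegative vector with coordinate sum $1$, and its support contains the support of $\mathbf{q}$, which meets two blocks. So it lies in $D$. For (\ref{item:distb}), take $\mathbf{b}_a\in B_i$ and $\mathbf{b}_c\in B_j$ with $i\neq j$. Every point of $(\mathbf{b}_a,\mathbf{b}_c)$ has support $\{a,c\}$, which meets two blocks, so it lies in $D$. For (\ref{item:sameb}), take distinct $a,c\in A_i$. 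The midpoint of $\mathbf{b}_a$ and $\mathbf{b}_c$ has support $\{a,c\}\subseteq A_i$, so it is not in $D$. It is also not a basis vector, so it is not in any $B_j$. Hence $(\mathbf{b}_a,\mathbf{b}_c)\nsubseteq X$.

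Finally, applying $\mathrm{MCV}(\R_S)$ gives a maximal convex $W\subseteq X$. By Lemma \ref{lma:generalchoice}, $W\cap B_i$ is a singleton $\{\mathbf{b}_{a_i}\}$ for each $i$, and $i\mapsto a_i$ is a choice function, defined without any further choice. The converse direction, that $\mathrm{AC}$ implies $\mathrm{MCV}$, is the standard Zorn argument, or alternatively follows from Proposition \ref{prop:well}. I expect the only delicate step to be choosing $D$ so that condition (\ref{item:bd}) holds for points of $B_i$ as well as for points of $D$, and so that $D$ avoids the open segments within a single block. The support condition "meets at least two blocks" is exactly what handles both requirements.
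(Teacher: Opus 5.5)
Your proof is correct and is essentially the paper's argument. The paper likewise works in $\R_{\mathcal{B}}$ with $\mathcal{B}=\{\seq{A,a}\mid A\in\mathcal{X}\land a\in A\}$, which is your disjointification, takes $D$ to be the nonnegative vectors whose support meets at least two blocks, and applies Lemma \ref{lma:generalchoice}. The differences are cosmetic: your normalization $\sum_a\mathbf{p}(a)=1$ is harmless but unnecessary, and you explicitly handle the degenerate case $|I|\le 1$ (where $D$ would be empty), which the paper passes over.
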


\proof Let $\mathcal{X}$ be any set of nonempty sets. We will find a choice function for $\mathcal{X}$ using $\mathrm{MCV}$. Let $\mathcal{B}=\{\seq{A, a}\mid A\in\mathcal{X}\land a\in A\}$ and let $V=\R_\mathcal{B}$.

Let
$$
D=\{\mathbf{p}\in V\mid \mathbf{p}\geq 0 \land |\{A\in\mathcal{X}\mid \exists a\in A[\mathbf{p}(\seq{A, a})>0]\}|\geq 2\}.
$$
Let $X=D\cup\{\mathbf{b}_{\seq{A, a}}\mid \seq{A, a}\in \mathcal{B}\}$. It is easy to see that
\begin{enumerate}[(1)]
\item\label{item:XD} For every $\mathbf{p}\in X$ and $\mathbf{q}\in D$, $(\mathbf{p}, \mathbf{q})\subseteq D$.
\item\label{item:AA} For every $A\in\mathcal{X}$ and $a, a'\in A$ with $a\not=a'$, $\displaystyle\frac{1}{2}({\mathbf{b}_{\seq{A, a}}+\mathbf{b}_{\seq{A, a'}}})\notin X$.
\item\label{item:AA'} For every $\seq{A, a}, \seq{A', a'}\in \mathcal{B}$ with $A\not=A'$, $(\mathbf{b}_{\seq{A, a}}, \mathbf{b}_{\seq{A', a'}})\subseteq D$.

\end{enumerate}
Now apply $\mathrm{MCV}$ to get a maximal convex subset $C$ of $X$. Then by Lemma \ref{lma:generalchoice}, $C$ contains exactly one element of the form $\mathbf{b}_{\seq{A, a}}$ for each $A\in\mathcal{X}$. This shows that $C$ essentially gives a choice function for $\mathcal{X}$.\qed

%
%

%% file: convex_2and3.tex
From now on, we will discuss $\mathrm{MCV}(V)$ for $V$'s  of finite dimensions.

$\mathrm{MCV}(0)$ is trivial. $\mathrm{MCV}(1)$ is also easily provable under $\mathrm{ZF}$, since for every nonempty subset $X\subseteq\R$, each connected component $C$ of $X$ is a maximal convex subset of $X$.

Then let us consider about $\mathrm{MCV}(2)$ and $\mathrm{MCV}(3)$. In the rest of this section we observe implications from $\mathrm{MCV}(2)$ and $\mathrm{MCV}(3)$, using Lemma \ref{lma:generalchoice}. In later sections we discuss implications of the other direction.

\begin{thm}[$\mathrm{ZF}$]\label{thm:2and3}
\begin{enumerate}[(1)]
\item\label{item:2cc} $\mathrm{MCV}(2)$ implies $\mathrm{CC}_\R$.
\item\label{item:3unif} $\mathrm{MCV}(3)$ implies $\mathrm{Unif}_\R$.
\end{enumerate}
\end{thm}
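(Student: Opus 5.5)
The plan is to apply Lemma~\ref{lma:generalchoice} in both cases. The $B_i$ will be encodings of the given sets, placed in the relative interiors of pairwise distinct one-dimensional faces of a convex body, and $D$ will be the interior of that body. The one extra device needed is a way to make condition~(\ref{item:sameb}) hold even though the $B_i$ lie on flat edges: encode each set inside a \emph{Cantor set} on the edge. Fix, in $\mathrm{ZF}$, an injection $e:\R\to\mathcal{C}$, where $\mathcal{C}\subseteq(0,1)$ is the middle-thirds Cantor set (for instance via binary expansions of reals into $2^\omega$). Then for any distinct $s,t\in\mathcal{C}$, the open interval between $s$ and $t$ contains a point not in $\mathcal{C}$. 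Hence if $E=[\mathbf{u},\mathbf{v}]$ is a segment and $B\subseteq\{(1-s)\mathbf{u}+s\mathbf{v}\mid s\in\mathcal{C}\}$, then for distinct $\mathbf{p},\mathbf{q}\in B$ the segment $(\mathbf{p},\mathbf{q})$ contains a point of $\operatorname{rint}E$ outside $B$.

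For (\ref{item:2cc}), let $\seq{A_n\mid n<\omega}$ be nonempty subsets of $\R$. Take points $\mathbf{v}_n=(\cos\theta_n,\sin\theta_n)$ with $\theta_n=\pi(1-2^{-n})$ strictly increasing, and let $K=\operatorname{conv}\operatorname{cl}\{\mathbf{v}_n\mid n<\omega\}$. This is a compact convex set with nonempty interior whose boundary contains the edges $E_n=[\mathbf{v}_n,\mathbf{v}_{n+1}]$, and these are pairwise distinct $1$-faces. Let $D=\operatorname{int}K$, let $B_n=\{(1-e(a))\mathbf{v}_n+e(a)\mathbf{v}_{n+1}\mid a\in A_n\}\subseteq\operatorname{rint}E_n$, and put $X=D\cup\bigcup_n B_n$. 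We then check the three conditions of the lemma.
\begin{enumerate}[(1)]
\item For $\mathbf{p}\in K$ and $\mathbf{q}\in\operatorname{int}K$ we have $(\mathbf{p},\mathbf{q})\subseteq\operatorname{int}K$; this is standard.
\item This is the Cantor-gap observation above. The missing point lies on $\partial K\setminus B_n$ and is therefore not in $X$.
\item If $\mathbf{p}\in\operatorname{rint}E_n$ and $\mathbf{q}\in\operatorname{rint}E_m$ with $n\neq m$, then $(\mathbf{p},\mathbf{q})\subseteq\operatorname{int}K$. Otherwise the segment would lie in a supporting line, and hence in a proper face of $K$ containing both relative-interior points. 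By Proposition~\ref{prop:face}, that face would have to contain both $E_n$ and $E_m$, which is impossible for distinct edges, since a $1$-face is contained in a single supporting line.
\end{enumerate}
A maximal convex subset $W$ of $X$, given by $\mathrm{MCV}(2)$, then meets each $B_n$ in exactly one point. Decoding via $e^{-1}$ yields a choice function for $\seq{A_n}$. This works because $\operatorname{rint}E_n$ are disjoint and the parametrization is explicit, so no choice is involved.

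For (\ref{item:3unif}), let $R\subseteq\R^2$ be a relation and, without loss of generality, index by $x\in\operatorname{dom}R$ the nonempty sets $R_x=\{y\mid xRy\}$. Replace the polygon by a cylinder. Fix an injection $h:\R\to(0,2\pi)$, let $K=\{(u,v,z)\mid u^2+v^2\leq1,\ 0\le z\le1\}$, and for each $x$ let $E_x$ be the generator segment $\{(\cos h(x),\sin h(x),z)\mid 0\le z\le 1\}$. Put $D=\operatorname{int}K$ and $B_x=\{(\cos h(x),\sin h(x),e(y))\mid y\in R_x\}$. Conditions (\ref{item:bd}) and (\ref{item:sameb}) follow exactly as before. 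Condition (\ref{item:distb}) holds because for different angles $\phi\neq\psi$ the open segment between points with $0<z<1$ on the two generators projects to an open chord of the unit circle, which lies in the open disc, so the segment is in $\operatorname{int}K$. Then $\mathrm{MCV}(3)$ gives $W$ meeting each $B_x$ in exactly one point, and $x\mapsto e^{-1}(\text{the $z$-coordinate of that point})$ uniformizes $R$.

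The main obstacle is condition~(\ref{item:sameb}). The naive choice of putting the $B_i$ on a strictly convex curve such as a circle fails, because chords between two points of the same $B_i$ then run through $D$. The remedy is to place each $B_i$ on a flat face and use a nowhere-dense encoding so that each chord within $B_i$ exits $X$ along the face. The remaining work is the routine verification of (\ref{item:bd}) and (\ref{item:distb}) for the chosen bodies, which are explicit.
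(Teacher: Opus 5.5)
Your proof is correct and follows the paper's argument: both apply Lemma~\ref{lma:generalchoice} with the encoded sets placed in the relative interiors of distinct edges of a planar convex region, resp.\ distinct generatrices of the solid cylinder, and both secure condition~(\ref{item:sameb}) with a coding whose image contains no interval (you use a Cantor set, while the paper uses $(0,1)\setminus\Q$ and a polygon through the points $\seq{n,n^2}$). One small slip: the middle-thirds Cantor set contains $0$ and $1$, so you should take $e$ with values in $\mathcal{C}\setminus\{0,1\}$ to keep each $B_n$ inside $\operatorname{rint}E_n$ (and each $B_x$ off the rims of the cylinder), which is easily arranged in $\mathrm{ZF}$.
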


\proof (\ref{item:2cc}) Fix a bijiection $\varphi:\R\to (0, 1)\setminus\Q$. For $n<\omega$, let $\mathbf{p}_n=\seq{n, n^2}\in\R^2$. Let $f:[0, \infty)\to\R$ be the function which has the polygonal line
$$
\bigcup_{n<\omega}[\mathbf{p}_n, \mathbf{p}_{n+1}]
$$
as its graph. For each $n<\omega$ let $\psi_n$ be the affine function which maps $(0, 1)$ onto $(\mathbf{p}_n, \mathbf{p}_{n+1})$. Now set
$$
D_2=\{\seq{x, y}\in\R^2\mid x>0\land y>f(x)\}.
$$
Note that $D_2$ is an open convex subset of $\R^2$ and each $[\mathbf{p}_n, \mathbf{p}_{n+1}]$ is an edge of $D_2$.
Now let $\seq{A_n\mid n<\omega}$ be an arbitrary sequence of nonempty sets of reals. For each $n<\omega$ let $B_n=(\psi_n\circ\varphi)''A_n$ and let
$$
X_2=D_2\cup\bigcup_{n<\omega}B_n.
$$
It is easy to see that
\begin{enumerate}[(1)]
\item\label{item:X2D2} For every $\mathbf{p}\in X_2$ and $\mathbf{q}\in D_2$, $(\mathbf{p}, \mathbf{q})\subseteq D_2$.
\item\label{item:mm} For every $m<\omega$ and distinct $\mathbf{p}, \mathbf{q}\in X_2\cap[\mathbf{p}_m, \mathbf{p}_{m+1}]$, $(\mathbf{p}, \mathbf{q})\nsubseteq X_2$.
\item\label{item:mn} For every $\mathbf{p}\in X_2\cap[\mathbf{p}_m, \mathbf{p}_{m+1}]$ and $\mathbf{q}\in X_2\cap[\mathbf{p}_n, \mathbf{p}_{n+1}]$ with $m\not=n$, $(\mathbf{p}, \mathbf{q})\subseteq D_2$.
\end{enumerate}
Now apply $\mathrm{MCV}(2)$ to obtain a maximal convex subset $C$ of $X$. Then by Lemma \ref{lma:generalchoice} $C$ contains exactly one point from $B_n$ for each $n<\omega$. This shows that $C$ essentially gives a choice function for the sequence $\seq{A_n\mid n<\omega}$.

\smallskip\noindent
(\ref{item:3unif}) Let $\varphi:\R\to(0, 1)\setminus \Q$ be as above and fix another bijection $\eta:\R\to [0, 2\pi)$. Let
$$
D_3=\{\seq{x, y, z}\in\R^3\mid x^2+y^2<1\land 0<z<1\}.
$$
Let $\seq{A_r\mid r\in\R}$ be an arbitrary sequence of nonempty sets of reals, and for each $r\in\R$ let
$$
B_r=\{\seq{\cos\eta(r), \sin\eta(r), z}\mid z\in\varphi''A_r\}.
$$
Note that $D_3$ is the interior of the body of a circular cylinder, and that each $B_r$ is on a generatrix of the cylinder.
Now let
$$
X_3=D_3\cup\bigcup_{r\in\R}B_r.
$$
Again it is easy to see that
\begin{enumerate}[(1)]
\item\label{item:X3D3} For every $\mathbf{p}\in X_3$ and $\mathbf{q}\in D_3$, $(\mathbf{p}, \mathbf{q})\subseteq D_3$.
\item\label{item:rr} For every $r\in\R$ and distinct $\mathbf{p}, \mathbf{q}\in X_3\cap B_r$, $(\mathbf{p}, \mathbf{q})\nsubseteq X_3$.
\item\label{item:rr'} For every $\mathbf{p}\in X_3\cap B_r$ and $\mathbf{q}\in X_3\cap B_{r'}$ with $r\not=r'$, $(\mathbf{p}, \mathbf{q})\subseteq D_3$.
\end{enumerate}
Apply $\mathrm{MCV}(3)$ to obtain a maximal convex subset $C$ of $X_2$. Again by Lemma \ref{lma:generalchoice}, $C$ essentially gives a choice function for the sequence $\seq{A_r\mid r\in\R}$.\qed

%% file: convex_generalapproach.tex
In this section we introduce a general framework to find out a maximal convex subset of a given subset of an $\R$-vector space, in particular of finite dimension.

\begin{dfn}\label{dfn:cvflt}
Let $V$ be an $\R$-vector space.
\begin{enumerate}[(1)]
  \item\label{item:cfc} $\vec{C}=\seq{C_\gamma\mid\gamma\leq\alpha}$ for an ordinal $\alpha$ is said to be a {\it convex filtration\/} (in $V$) if $\vec{C}$ is a $\subseteq$-continuous increasing sequence of convex subsets of $V$.
  \item\label{item:mcfc} A convex filtration $\vec{C}=\seq{C_\gamma\mid\gamma\leq\alpha}$ in $V$ is said to be {\it fine\/} with a {\it partition sequence\/} $\seq{\mathcal{P}_\gamma\mid\gamma<\alpha}$ if for every $\gamma<\alpha$
\begin{enumerate}[(i)]
  \item\label{item:mcfc0} $C_\gamma\cup\{\mathbf{p}\}$ is convex for every $\mathbf{p}\in C_{\gamma+1}\setminus C_\gamma$,
  \item\label{item:mcfc1} $\mathcal{P}_\gamma$ is a partition of $C_{\gamma+1}\setminus C_\gamma$ into convex sets, and
  \item\label{item:mcfc2} for every $\mathbf{p}\in E$ and $\mathbf{q}\in E'$ for distinct $E$, $E'\in\mathcal{P}_\gamma$, $(\mathbf{p}, \mathbf{q})\subseteq C_\gamma$ holds.
\end{enumerate}
\end{enumerate}
\end{dfn}


\begin{lma}\label{lma:general}
Suppose $X\subseteq V$ and $\vec{C}=\seq{C_\gamma\mid\gamma\leq\alpha}$ is a convex filtration such that $X\subseteq C_\alpha$.
\begin{enumerate}[(1)]
\item\label{item:gencfc} Suppose $\vec{W}=\seq{W_\gamma\mid\gamma\leq\alpha}$ satisfies the following:
\begin{enumerate}[(i)]
\item\label{item:general1} $W_0$ is a maximal convex subset of $X\cap C_0$.
\item\label{item:general2} For each $\gamma<\alpha$, $W_{\gamma+1}$ is a maximal $Z\subseteq X\cap C_{\gamma+1}$ such that $Z$ is convex and $Z\cap C_\gamma=W_\gamma$.
\item\label{item:general3} For each limit $\gamma\leq\alpha$, $W_\gamma=\bigcup\{W_\xi\mid\xi<\gamma\}$.
\end{enumerate}
Then $W_\alpha$ is a maximal convex subset of $X$.
\item\label{item:genmcfc} Suppose that $\vec{C}$ is fine with a partition sequence $\seq{\mathcal{P}_\gamma\mid\gamma<\alpha}$
and $\vec{W}=\seq{W_\gamma\mid\gamma\leq\alpha}$ satisfies (\ref{item:general1}), (\ref{item:general3}) above and 
the following, instead of (\ref{item:general2}):
\begin{enumerate}[(i)]
\setcounter{enumii}{3}
\item For each $\gamma<\alpha$, $W_{\gamma+1}$ is maximal $Z\subseteq X\cap C_{\gamma+1}$ such that:
\begin{enumerate}[(a)]
  \item\label{item:a} $Z\cap C_\gamma=W_\gamma$,
  \item\label{item:b} $W_\gamma\cup\{\mathbf{p}\}$ is convex for every $\mathbf{p}\in Z\setminus C_\gamma$,
  \item\label{item:c} $Z\cap E$ is convex for every $E\in\mathcal{P}_\gamma$ and
  \item\label{item:d} for every $\mathbf{p}\in Z\cap E$ and $\mathbf{q}\in Z\cap E'$ for distinct $E$, $E'\in\mathcal{P}_\gamma$, $(\mathbf{p}, \mathbf{q})\subseteq W_\gamma$ holds.
\end{enumerate}
\end{enumerate}
Then $W_\alpha$ is a maximal convex subset of $X$.
\end{enumerate}
\end{lma}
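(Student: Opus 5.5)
For (1), I will first record by induction on $\gamma\le\alpha$ the invariant
\[
W_\gamma\subseteq X\cap C_\gamma,\quad W_\gamma \text{ convex},\quad W_\xi=W_\gamma\cap C_\xi \text{ for all } \xi\le\gamma .
\]
The successor step is immediate from (ii), since $W_{\gamma+1}\cap C_\gamma=W_\gamma$ and $C_\xi\subseteq C_\gamma$. At limit stages, convexity holds because the union is increasing. Coherence at a limit $\gamma$ uses $W_\gamma\cap C_\xi=\bigcup_{\xi\le\eta<\gamma}(W_\eta\cap C_\xi)=W_\xi$.

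Now suppose $Z$ is convex with $W_\alpha\subseteq Z\subseteq X$. I will show by induction that $Z\cap C_\gamma=W_\gamma$ for every $\gamma\le\alpha$. Since $X\subseteq C_\alpha$, the case $\gamma=\alpha$ gives $Z=W_\alpha$.

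For $\gamma=0$, note that $Z\cap C_0$ is a convex subset of $X\cap C_0$ containing $W_0=W_\alpha\cap C_0$, so maximality in (i) gives equality. At a successor step, assume $Z\cap C_\gamma=W_\gamma$. Then $Z\cap C_{\gamma+1}$ is convex, contained in $X\cap C_{\gamma+1}$, meets $C_\gamma$ exactly in $W_\gamma$, and contains $W_{\gamma+1}$. Maximality in (ii) therefore gives $Z\cap C_{\gamma+1}=W_{\gamma+1}$. At a limit $\gamma$, continuity of $\vec C$ gives $Z\cap C_\gamma=\bigcup_{\xi<\gamma}Z\cap C_\xi=W_\gamma$.

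For (2), I will reduce to (1) by showing that under fineness the class of sets $Z$ described in (iv) is exactly the class of convex $Z\subseteq X\cap C_{\gamma+1}$ with $Z\cap C_\gamma=W_\gamma$, given that $W_\gamma$ is convex. Once this is done, (iv) coincides with (ii) and (1) applies; the invariant from (1) supplies the convexity of $W_\gamma$ at each stage.

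The first direction is that a convex $Z$ with $Z\cap C_\gamma=W_\gamma$ satisfies (a)–(d).
\begin{itemize}
\item (a) holds by assumption.
\item (c) holds because $Z$ and every $E\in\mathcal P_\gamma$ are convex.
\item For (b), take $\mathbf p\in Z\setminus C_\gamma$ and $\mathbf q\in W_\gamma$. By fineness (i), $(\mathbf p,\mathbf q)\subseteq C_\gamma$, and by convexity of $Z$ it lies in $Z$. Hence $(\mathbf p,\mathbf q)\subseteq Z\cap C_\gamma=W_\gamma$.
\item (d) is the same argument using fineness (iii) in place of (i).
\end{itemize}

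The converse is the main point: if $Z$ satisfies (a)–(d), then $Z$ is convex. Take $\mathbf p,\mathbf q\in Z$. Since $Z\subseteq C_{\gamma+1}=C_\gamma\cup\coprod\mathcal P_\gamma$, there are four cases.
\begin{itemize}
\item If both points lie in $W_\gamma$, the segment lies in $W_\gamma$ by convexity of $W_\gamma$.
\item If one lies in $W_\gamma$ and the other outside $C_\gamma$, use (b).
\item If both lie in the same $E\in\mathcal P_\gamma$, use (c).
\item If they lie in distinct $E,E'\in\mathcal P_\gamma$, use (d).
\end{itemize}
In every case $(\mathbf p,\mathbf q)\subseteq Z$, so $Z$ is convex. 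The only delicate point in the whole argument is bookkeeping: $W_\gamma$ must be known to be convex at each stage, which the induction invariant from (1) provides.
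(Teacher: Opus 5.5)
Your proposal is correct and follows essentially the same route as the paper. For (1) you establish the coherence invariant $W_\beta\cap C_\gamma=W_\gamma$ and then run a transfinite induction showing that any convex $Z\supseteq W_\alpha$ inside $X$ satisfies $Z\cap C_\gamma=W_\gamma$; the paper phrases this as a least-counterexample argument. For (2) you make the same reduction: given that $W_\gamma$ is convex, the sets satisfying (a)--(d) are exactly the convex $Z$ with $Z\cap C_\gamma=W_\gamma$. Convexity of $W_\gamma$ there really comes from a simultaneous induction with this equivalence rather than from the invariant of (1) as literally stated, which assumed (ii), but the paper leaves this implicit too and the induction goes through.
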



%

\proof (\ref{item:gencfc}) It is clear that $\vec{W}$ is $\subseteq$-increasing and continuous. $W_\gamma$ is convex for $\gamma=0$ or successor by (\ref{item:general1}) and (\ref{item:general2}). $W_\gamma$ is convex also for limit $\gamma$, since the union of a $\subseteq$-chain of convex sets is convex. This shows that $\vec{W}$ is a convex filtration. It is also easy to see by induction that $W_\beta\cap C_\gamma=W_\gamma$ for each $\gamma\leq\beta\leq\alpha$.

Now to see $W_\alpha$ is maximal, suppose $W'\supsetneq W_\alpha$ is a convex subset of $X$ (and thus of $C_\alpha$). Let $\gamma\leq\alpha$ be the least such that $W'\cap C_\gamma\supsetneq W_\gamma$. But $\gamma\not=0$ by (\ref{item:general1}), $\gamma$ cannot be successor by (\ref{item:general2}) and $\gamma$ cannot be limit by the continuity of $\vec{W}$. This is a contradiction.

\noindent
(\ref{item:genmcfc}) It is enough to show that for each $\gamma<\alpha$, assuming that $W_\gamma$ is convex, for each $Z\subseteq X\cap C_{\gamma+1}$ satisfying $Z\cap C_\gamma=W_\gamma$, $Z$ is convex if and only if $Z$ satisfies (b), (c) and (d).

Then let $\gamma<\alpha$ and assume $W_\gamma$ is convex. Let $Z$ be arbitrary such that $Z\subseteq X\cap C_{\gamma+1}$ and $Z\cap C_\gamma=W_\gamma$.

First suppose $Z$ is convex. For any $\mathbf{p}\in Z\setminus C_\gamma\subseteq C_{\gamma+1}\setminus C_\gamma$, by Definition \ref{dfn:cvflt}(\ref{item:mcfc})(\ref{item:mcfc0}), $C_\gamma\cup\{\mathbf{p}\}$ is convex, and thus $Z\cap(C_\gamma\cup\{\mathbf{p}\})=W_\gamma\cup\{\mathbf{p}\}$ is also convex, which shows (b). Since each $E\in\mathcal{P}_\gamma$ is convex by Definition \ref{dfn:cvflt}(\ref{item:mcfc})(\ref{item:mcfc1}), $Z\cap E$ is convex as well, which shows (c). Now suppose $\mathbf{p}\in Z\cap E$ and $\mathbf{q}\in Z\cap E'$ for distinct $E$, $E'\in\mathcal{P}_\gamma$. Then by Definition \ref{dfn:cvflt}(\ref{item:mcfc})(\ref{item:mcfc2}) and since $Z$ is convex, it holds that $(\mathbf{p}, \mathbf{q})\subseteq C_\gamma\cap Z=W_\gamma$, which shows (d).

To show the other direction suppose $Z$ satisfies (b), (c) and (d). Let $\mathbf{p}$, $\mathbf{q}\in Z$ be distinct. In case $\mathbf{p}$, $\mathbf{q}\in Z\cap C_\gamma=W_\gamma$, by our assumption that $W_\gamma$ is convex we have $(\mathbf{p}, \mathbf{q})\subseteq W_\gamma\subseteq Z$. In case $\mathbf{p}\in Z\cap C_\gamma$ and $\mathbf{q}\in Z\setminus C_\gamma$, 
$(\mathbf{p}, \mathbf{q})\subseteq Z$ follows from (b). In case $\mathbf{p}$, $\mathbf{q}\in Z\setminus C_\gamma$ and if both belong to $Z\cap E$ for the same $E\in\mathcal{P}_\gamma$, $(\mathbf{p}, \mathbf{q})\subseteq Z$ follows from (c), and if they belong respectively to $Z\cap E$ and $Z\cap E'$ for distinct $E$, $E'\in\mathcal{P}_\gamma$, $(\mathbf{p}, \mathbf{q})\subseteq Z$ follows from (d). This shows that $Z$ is convex.\qed

Lemma \ref{lma:general} shows how we may find a maximal convex subset of a given $X$: First choose an appropriate convex filtration $\vec{C}=\seq{C_\gamma\mid\gamma\leq\alpha}$ such that $X\subseteq C_\alpha$. Then choose a maximal convex subset $W_0$ of $X\cap C_0$ first, and once $W_\gamma$ is chosen then extend it to a convex subset $W_{\gamma+1}$ of $X\cap C_{\gamma+1}$ without adding points in $C_\gamma$ so that no such convex subset of $X\cap C_{\gamma+1}$ properly containing $W_{\gamma+1}$ exists. For limit $\gamma$ let $W_\gamma$ be the union of all preceding $W_\xi$'s. Then $W_\alpha$ will be a maximal convex subset of $X$. In case $\vec{C}$ is fine with a partition sequence $\seq{\mathcal{P}_\gamma\mid\gamma<\alpha}$, to choose $W_{\gamma+1}$ we may first let
$$
X'=\{\mathbf{p}\in X\cap (C_{\gamma+1}\setminus C_\gamma)\mid\text{$W_\gamma\cup\{\mathbf{p}\}$ is convex}\}
$$
and choose a convex $C_E\subseteq X'\cap E$ for each $E\in\mathcal{P}_\gamma$ so that for every distinct $E$, $E'\in\mathcal{P}_\gamma$, it holds that $(\mathbf{p}, \mathbf{q})\subseteq W_\gamma$ for every $\mathbf{p}\in E$ and $\mathbf{q}\in E'$, and also that whenever $\seq{C'_E\mid E\in\mathcal{P}_\gamma}$ satisfies the same conditions and $C'_E\supseteq C_E$ holds for each $E\in\mathcal{P}_\gamma$, it holds that $C'_E=C_E$ for every $E\in\mathcal{P}_\gamma$. Then we may let
$$
W_{\gamma+1}=W_\gamma\cup\bigcup_{E\in\mathcal{P}_\gamma}C_E.
$$

Next we introduce a method to construct a convex filtration using the structure of faces, in case $V$ is of finite dimension.

\begin{dfn}\label{dfn:faceflt}
Let $V$ be an $\R$-vector space of finite dimension and $C$ a convex subset of $V$.
\begin{enumerate}[(1)]
  \item We say $\vec{\mathcal{F}}=\seq{\mathcal{F}_\gamma\mid\gamma\leq\alpha}$ is a {\it face filtration\/} of $C$ if it is a $\subseteq$-continuous increasing sequence of upward closed subsets of $(\mathcal{F}_C\setminus\{\emptyset\}, \subseteq)$ such that $\mathcal{F}_\alpha=\mathcal{F}_C\setminus\{\emptyset\}$.
  \item We say a face filtration $\vec{\mathcal{F}}=\seq{\mathcal{F}_\gamma\mid\gamma\leq\alpha}$ of $C$ is {\it fine\/} if for each $\gamma<\alpha$ every two distinct members of $\mathcal{F}_{\gamma+1}\setminus\mathcal{F}_\gamma$ are $\subseteq$-incomparable.
\end{enumerate}
\end{dfn}

\begin{lma}\label{lma:face}
Let $V$ be an $\R$-vector space of finite dimension, $C$ a convex subset of $V$ and $\vec{\mathcal{F}}=\seq{\mathcal{F}_\gamma\mid\gamma\leq\alpha}$ a face filtration of $C$.
\begin{enumerate}[(1)]
  \item\label{item:facefiltration} For each $\gamma\leq\alpha$ let $$\mathcal{F}^\circ_\gamma=\bigcup_{F\in\mathcal{F}_\gamma}\operatorname{\mathrm{rint}}F.$$
Then $\vec{\mathcal{F}^\circ}=\seq{\mathcal{F}^\circ_\gamma\mid\gamma\leq\alpha}$ is a convex filtration satisfiying $\mathcal{F}^\circ_\alpha=C$.
  \item\label{item:finefacefiltration} If $\vec{\mathcal{F}}$ is fine, then by letting
$$\mathcal{P}_\gamma=\{\operatorname{rint}F\mid F\in\mathcal{F}_{\gamma+1}\setminus\mathcal{F}_\gamma\}$$
$\vec{\mathcal{F}^\circ}$ is a fine convex filtration with a partition sequence $\seq{\mathcal{P}_\gamma\mid\gamma<\alpha}$.
\end{enumerate}
\end{lma}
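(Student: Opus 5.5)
The plan is to derive both parts directly from Proposition \ref{prop:face}.

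For (\ref{item:facefiltration}), the sequence $\vec{\mathcal{F}^\circ}$ inherits monotonicity and continuity from $\vec{\mathcal{F}}$, since $F\mapsto\operatorname{rint}F$ is applied termwise and unions commute with unions. The equality $\mathcal{F}^\circ_\alpha=C$ is exactly Proposition \ref{prop:face}(\ref{item:rintunion}), because $\mathcal{F}_\alpha=\mathcal{F}_C\setminus\{\emptyset\}$.

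The substantive point is that each $\mathcal{F}^\circ_\gamma$ is convex. I will take distinct $\mathbf{p}\in\operatorname{rint}F_0$ and $\mathbf{q}\in\operatorname{rint}F_1$ with $F_0,F_1\in\mathcal{F}_\gamma$. By Proposition \ref{prop:face}(\ref{item:pqrint}), $(\mathbf{p},\mathbf{q})\subseteq\operatorname{rint}(F_0\lor F_1)$. Since $F_0\subseteq F_0\lor F_1$ and $\mathcal{F}_\gamma$ is upward closed, $F_0\lor F_1\in\mathcal{F}_\gamma$, so the segment lies in $\mathcal{F}^\circ_\gamma$. The empty case is trivial.

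For (\ref{item:finefacefiltration}), I first note that $C_{\gamma+1}\setminus C_\gamma=\bigcup_{F\in\mathcal{F}_{\gamma+1}\setminus\mathcal{F}_\gamma}\operatorname{rint}F$. This holds because the relative interiors of distinct nonempty faces are disjoint (the coproduct in Proposition \ref{prop:face}(\ref{item:rintunion})), so no point of $\operatorname{rint}F$ with $F\notin\mathcal{F}_\gamma$ can lie in $\mathcal{F}^\circ_\gamma$. The same disjointness shows that $\mathcal{P}_\gamma$ is a partition; its members are nonempty, since a nonempty convex set in finite dimensions has nonempty relative interior. Each member is convex, as the relative interior of a convex set.

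Condition (ii) now follows. For (iii), I take $\mathbf{p}\in\operatorname{rint}E$ and $\mathbf{q}\in\operatorname{rint}E'$ with $E\neq E'$ in $\mathcal{F}_{\gamma+1}\setminus\mathcal{F}_\gamma$. The segment $(\mathbf{p},\mathbf{q})$ lies in $\operatorname{rint}(E\lor E')$. Fineness says $E$ and $E'$ are incomparable, so $E\lor E'$ strictly contains $E$. The main obstacle is to show that $E\lor E'\in\mathcal{F}_\gamma$, and not merely in $\mathcal{F}_{\gamma+1}$, which is what upward closure alone gives. Suppose $E\lor E'\in\mathcal{F}_{\gamma+1}\setminus\mathcal{F}_\gamma$. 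Then $E\subsetneq E\lor E'$ would be two distinct comparable members of that set, contradicting fineness. Hence $(\mathbf{p},\mathbf{q})\subseteq C_\gamma$.

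For (i), I take $\mathbf{p}\in\operatorname{rint}E$ with $E\in\mathcal{F}_{\gamma+1}\setminus\mathcal{F}_\gamma$, and $\mathbf{q}\in\operatorname{rint}F$ with $F\in\mathcal{F}_\gamma$. Again $(\mathbf{p},\mathbf{q})\subseteq\operatorname{rint}(E\lor F)$, and $E\lor F\in\mathcal{F}_\gamma$ by upward closure from $F$. Together with the convexity of $C_\gamma$ from part (1), this shows that $C_\gamma\cup\{\mathbf{p}\}$ is convex.
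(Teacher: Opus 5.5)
Your proposal is correct and follows the paper's proof essentially step for step. Convexity of each $\mathcal{F}^\circ_\gamma$ comes from Proposition~\ref{prop:face}(\ref{item:pqrint}) plus upward closure, the partition property from the disjoint decomposition in Proposition~\ref{prop:face}(\ref{item:rintunion}), and condition (iii) from the fact that fineness forces $E\lor E'\in\mathcal{F}_\gamma$. You merely make explicit a few points the paper leaves implicit, namely that $C_{\gamma+1}\setminus C_\gamma$ is exactly the union of the new relative interiors and that the blocks of $\mathcal{P}_\gamma$ are nonempty.
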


\proof (\ref{item:facefiltration}) Since each $\mathcal{F}_\gamma$ for $\gamma\leq\alpha$ is upward closed, by Proposition \ref{prop:face}(\ref{item:pqrint}) $\mathcal{F}^\circ_\gamma$ is convex. Thus it is clear that $\seq{\mathcal{F}^\circ_\gamma\mid\gamma\leq\alpha}$ is a convex filtration. $\mathcal{F}^\circ_\alpha=C$ follows from Proposition \ref{prop:face}(\ref{item:rintunion}).

\noindent
(\ref{item:finefacefiltration}) Suppose $\vec{\mathcal{F}}$ is fine and let $\seq{\mathcal{P}_\gamma\mid\gamma<\alpha}$ be as above. For each $\gamma<\alpha$ we will show (\ref{item:mcfc0})--(\ref{item:mcfc2}) of Definition \ref{dfn:cvflt}(\ref{item:mcfc}). (\ref{item:mcfc0}) follows from Proposition \ref{prop:face}(\ref{item:pqrint}) and our assumption that $\mathcal{F}_\gamma$ is upward closed. (\ref{item:mcfc1}) follows from Proposition \ref{prop:face}(\ref{item:rintunion}), since the relative interior of any convex set is convex. For every two distinct faces $F$, $F'\in\mathcal{F}_{\gamma+1}\setminus\mathcal{F}_\gamma$, since they are $\subseteq$-incomparable and $\mathcal{F}_{\gamma+1}$ is upward closed, $F\lor F'\in\mathcal{F}_{\gamma+1}$ holds. But since $F\subsetneq F\lor F'$, $F\lor F'$ cannot be a member of $\mathcal{F}_{\gamma+1}\setminus\mathcal{F}_\gamma$, and thus $F\lor F'\in\mathcal{F}_\gamma$ holds. Therefore we have (\ref{item:mcfc2}) by Proposition \ref{prop:face}(\ref{item:pqrint}) again. This completes the proof that $\vec{\mathcal{F}^\circ}$ is a fine convex filtration with $\seq{\mathcal{P}_\gamma\mid\gamma<\alpha}$.\qed

%

%% file: convex_cctomcv2.tex
In this section we prove the following, using the framework given in the previous section.
\begin{thm}\label{thm:cctotwo}
$\mathrm{CC}_\R$ implies $\mathrm{MCV}(2)$. 
\end{thm}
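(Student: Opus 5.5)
Fix $X\subseteq\R^2$. Working in $\mathrm{ZF}+\mathrm{CC}_\R$, I will build a maximal convex subset of $X$ through the framework of Lemma \ref{lma:general}(\ref{item:genmcfc}) and Lemma \ref{lma:face}. The difficulty is that the ambient convex set must be chosen well. I will organize the construction by the dimension of the target, handling $\dim 2$, $\dim 1$ and $\dim 0$ candidates separately.

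First I would try the $2$-dimensional case directly. Let $U=\operatorname{int}X$ and choose a connected component $K$ of $U$; components are open and are indexed by rational points, so $K$ can be chosen without any choice. A maximal convex subset of $X$ containing a $2$-dimensional convex set lies in $\operatorname{cl}$ of an open convex subset of $U$. Rather than guessing, I will run a general scheme. Take $C=\operatorname{conv}X$, or better the whole plane viewed as the top face, and use a face filtration of a convex set $C$ whose relative interior pieces are handled in order: the $2$-dimensional interior first, then the $1$-faces, then the $0$-faces. By Proposition \ref{prop:basic}(\ref{item:minusone}), a $2$-dimensional convex set has only countably many $1$-faces, and each $1$-face has at most two $0$-faces. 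So once the stage-$0$ set $W_0$ is fixed, the face filtration of $\operatorname{cl}W_0$, or of an appropriate $C\supseteq W_0$, is fine with countable partitions. The extension problem at a $1$-face $E$ is a $1$-dimensional problem. The points $\mathbf{p}\in E\cap X$ with $W_\gamma\cup\{\mathbf{p}\}$ convex form a subset of a line, and a maximal convex subset of it is one interval. Choosing such an interval requires a choice among its components, which are countably or uncountably many intervals but each is codable by a pair of reals. Over countably many edges, $\mathrm{CC}_\R$ supplies the simultaneous choices. The $0$-faces (endpoints) are at most two per edge, so they can be handled with no further choice. Condition (d) of Lemma \ref{lma:general}(\ref{item:genmcfc}) holds automatically by Lemma \ref{lma:face}(\ref{item:finefacefiltration}).

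Concretely, I would proceed in the following order. Step 1: decide whether $X$ contains a $2$-dimensional convex set. If it does, choose a maximal open convex $O\subseteq\operatorname{int}X$. This is done by enumerating the rational points and open rational triangles and greedily enlarging, much as in Proposition \ref{prop:well}, which works because an open convex set is determined by its rational points. The resulting $O$ need not be maximal among convex subsets of $\operatorname{int}X$ in a naive sense, but I would take the union of the greedy chain over a countable enumeration of rational convex polygons. Step 2: set $C=\operatorname{cl}O$ and build the fine face filtration $\mathcal{F}_0=\{C\}$, then the stage of all $1$-faces, then the stage of all $0$-faces not already covered. This takes finitely many stages, and each stage has a countable partition by Proposition \ref{prop:basic}. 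Then apply Lemma \ref{lma:general}(\ref{item:genmcfc}) with $W_0=O$, where I would need to check that $O$ is maximal in $X\cap\operatorname{rint}C$, using $\mathrm{CC}_\R$ for the $1$-dimensional choices on edges. Step 3: if $X$ contains no $2$-dimensional convex set, every convex subset is a segment or point. Pick a line and a component interval, which are reals-codable and need only a single choice from a nonempty set, and that is fine in $\mathrm{ZF}$ after fixing, for instance, the lexicographically least rational data. Since all points of $X$ outside a line would break convexity, maximality on the line gives maximality globally.

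The main obstacle is ensuring that the stage-$0$ object is genuinely maximal, and that points of $X$ outside $\operatorname{cl}O$ cannot be added. Convex sets containing $O$ lie in $\operatorname{cl}O$ only if $O$ is maximal among all convex subsets meeting $\operatorname{int}X$ in a $2$-dimensional way. A greedy countable enumeration may give an $O$ with $O\cup\{\mathbf{p}\}$ convex-extendable beyond $\operatorname{cl}O$ by a segment outside it, which would produce new relative-interior pieces. I would try to fix this by making the first stage choose $W_0$ as a maximal convex subset among those whose interior part is $O$, iterating countably many times with $\mathrm{CC}_\R$. I expect this interplay, rather than the edge-by-edge choices, to be where the real work lies.
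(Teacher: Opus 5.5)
Your architecture is the paper's: a greedy maximal open convex set $O$ over a countable base, $C=\operatorname{cl}O$, and a fine face filtration that handles the interior, then the $1$-faces via $\mathrm{CC}_\R$, then the $0$-faces. The gap is that the step you call the main obstacle is left unresolved, and it is exactly what justifies restricting to $\operatorname{cl}O$.

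No iteration is needed: it suffices that $O$ is maximal among \emph{open} convex subsets of $X$. The greedy rule gives this. Set $D_{n+1}=\operatorname{conv}(D_n\cup U_n)$ if this set is contained in $X$, and $D_{n+1}=D_n$ otherwise, over a countable base $\{U_n\mid n<\omega\}$, and let $O=\bigcup_n D_n$. If $O'\supsetneq O$ were open convex in $X$, pick $\mathbf{p}\in O'\setminus O$ and $U_n$ with $\mathbf{p}\in U_n\subseteq O'$. Then $\operatorname{conv}(D_n\cup U_n)\subseteq O'\subseteq X$, so $\mathbf{p}\in D_{n+1}\subseteq O$, a contradiction.

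Now suppose $K\subseteq X$ is convex, $K\supseteq O$, and $\mathbf{p}\in K\setminus\operatorname{cl}O$. Then $O^+=\bigcup_{0<t\leq 1}(tO+(1-t)\mathbf{p})$ is an open convex subset of $\operatorname{conv}(O\cup\{\mathbf{p}\})\subseteq X$. It contains $O$, and it contains points arbitrarily close to $\mathbf{p}$, hence points outside $O$. This contradicts the maximality of $O$.

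Consequences:
\begin{itemize}
\item Every convex subset of $X$ containing $O$ lies in $\operatorname{cl}O$, so you may assume $O\subseteq X\subseteq\operatorname{cl}O$.
\item A maximal convex subset of $X\cap\operatorname{cl}O$ containing $O$ is then maximal in $X$.
\item $W_0=O$ is trivially maximal in $X\cap\operatorname{rint}C$, since $\operatorname{rint}\operatorname{cl}O=O$ for open convex $O$.
\end{itemize}
Your proposed remedy (choosing $W_0$ among sets with interior $O$ and iterating countably often with $\mathrm{CC}_\R$) is unnecessary, and as described it establishes no maximality.

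The $0$-face stage is also mishandled. Condition (d) of Lemma~\ref{lma:general}(\ref{item:genmcfc}) requires $(\mathbf{p},\mathbf{q})\subseteq W_\gamma$. Lemma~\ref{lma:face}(\ref{item:finefacefiltration}) only gives $(\mathbf{p},\mathbf{q})\subseteq\mathcal{F}^\circ_\gamma$. These agree at the $1$-face stage because $W_0=\mathcal{F}^\circ_0$, but not afterwards. If $\mathbf{p},\mathbf{q}$ are the endpoints of a $1$-face $E$ with $X\cap\operatorname{rint}E=\emptyset$, then $(\mathbf{p},\mathbf{q})=\operatorname{rint}E\not\subseteq W_1$, so they cannot both be added. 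Condition (b) further restricts which endpoints are admissible, depending on $C_E$. Also, the set of $0$-faces is in general uncountable (think of a disc), so your claim that each stage has a countable partition is false.

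The paper resolves this by splitting the $0$-faces into two kinds:
\begin{itemize}
\item For a $0$-face $\{\mathbf{p}\}$ lying in no $1$-face, $\{\mathbf{p}\}\lor F=C$ for every other nonempty face $F$. Hence $(\mathbf{p},\mathbf{q})\subseteq\operatorname{int}C=O$ for all $\mathbf{q}\in C\setminus\{\mathbf{p}\}$, and all such points of $X$ can be added at once.
\item The endpoints of $1$-faces form a countable set, enumerable without choice from an enumeration of the $1$-faces and a fixed linear order of $\R^2$. They are added one at a time, each whenever the result stays convex; these singleton stages make (d) vacuous.
\end{itemize}

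Finally, in your lower-dimensional case the chosen component of $L\cap X$ must be nondegenerate. Your argument that points off $L$ break convexity uses $\dim=1$, and a singleton component need not be maximal.
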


\proof Fix an enumeration $\seq{U_n\mid n<\omega}$ of open basis of $\R^2$. Fix also a total ordering $<_2$ of $\R^2$ (we may use the lexicographic order for example).
Suppose $X\subseteq\R^2$ and we will find a maximal convex subset of $X$. We may assume that $X$ is nonempty. Let
$$
k:=\max\{\dim C\mid\text{$C$ is a nonempty convex subset of $X$}\}.
$$
Note that $0\leq k\leq 2$. In case $k=0$, any point of $X$ is a maximal convex subset of $X$, since any convex set with more than two points would have positive dimension. In case $k=1$, pick a convex $C\subseteq X$ of dimension $1$, and let $L=\operatorname{aff} C$, that is the line in which $C$ lies. Let $C'$ be the connected component of $L\cap X$ containing $C$. Then $C'$ is a maximal convex subset of $X$, since $C'$ is clearly maximal within $L\cap X$, and any convex subset containing $C'$ together with a point not in $L$ would have dimension $2$.

Now we assume $k=2$. Let $D_0=\emptyset$ and define $D_{n+1}$ by induction on $n<\omega$ by
$$
D_{n+1}:=
\begin{cases}
\operatorname{conv}(D_n\cup U_n)&\text{if $\operatorname{conv}(D_n\cup U_n)\subseteq X$,}\\
D_n&\text{otherwise}.
\end{cases}
$$
Then let $D:=\bigcup_{n<\omega}D_n$. Note that $D\not=\emptyset$, since $X$ has a convex subset of dimension $2$, which must contain a nonempty open convex subset. Since the convex hull of an open subset of $\R^2$ is open, $D$ is a maximal open convex subset of $X$. We will find a maximal convex subset of $X$ by extending $D$. Let $C:=\operatorname{cl} D$. Since any convex set containing $D$ together with a point not in $C$ would have the convex interior which strictly extends $D$, by maximality of $D$ any convex subset of $X$ containing $D$ must be contained in $C$. Thus for our purpose we may assume that $D\subseteq X\subseteq C$.
Let
\begin{eqnarray*}
\mathcal{E}&=&\text{the set of $1$-faces of $C$,}\\
\mathcal{V}_1&=&\text{the set of $0$-faces of $C$ contained in some $1$-face of $C$,}\\
\mathcal{V}&=&\text{the set of other $0$-faces of $C$.}
\end{eqnarray*}

Note that $|\mathcal{E}|\leq\omega$  by Proposition \ref{prop:basic}(\ref{item:minusone}). Since each $1$-face of $C$ contains at most two $0$-faces of $C$ by Proposition \ref{prop:basic}(\ref{item:faceofface}) and (\ref{item:zeroface}), we also have $|\mathcal{V}_1|\leq\omega$ (Here we don't need any fragment of $\mathrm{AC}$, since using $<_2$ we can uniformly choose orderings of the $0$-faces contained in each $1$-face of $C$). Let $\seq{P_i=\{\mathbf{p}_i\}\mid i<m}$ ($m\leq\omega$) be an enumeration of $\mathcal{V}_1$. Now let
\begin{eqnarray*}
\mathcal{F}_0&=&\{C\},\\
\mathcal{F}_1&=&\mathcal{F}_0\cup\mathcal{E},\\
\mathcal{F}_2&=&\mathcal{F}_1\cup\mathcal{V},\\
\mathcal{F}_{i+3}&=&\mathcal{F}_{i+2}\cup\{P_i\}\quad(\text{for $i<m$}),\\
\mathcal{F}_\omega&=&\bigcup_{i<\omega}\mathcal{F}_i\quad(\text{if $m=\omega$}).
\end{eqnarray*}

Then $\seq{\mathcal{F}_i\mid i\leq 2+m}$ is a fine face filtration of $C$. Let $\seq{\mathcal{F}^\circ_i\mid i\leq 2+m}$ be the fine convex filtration derived as in Lemma \ref{lma:face}. We will find $\seq{W_i\mid i\leq 2+m}$ as in Lemma \ref{lma:general}(\ref{item:genmcfc}). Then $W_{2+m}$ will be a maximal convex subset of $X$. 

First of all, since any open convex set is regularly open we have $\mathcal{F}^\circ_0=\operatorname{rint}C=\operatorname{int}C=D\subseteq X$. Thus we may set $W_0=D$.

Now let us choose $W_1$. Note first that by fineness $W_0\cup\{\mathbf{p}\}=\mathcal{F}^\circ_0\cup\{\mathbf{p}\}$ is convex for each $\mathbf{p}\in\mathcal{F}^\circ_1\setminus\mathcal{F}^\circ_0$. Note also that $\mathcal{F}^\circ_1\setminus\mathcal{F}^\circ_0=\coprod_{E\in\mathcal{E}}\operatorname{rint}E$, and for any $\mathbf{p}\in\operatorname{rint} E$ and $\mathbf{q}\in\operatorname{rint} E'$ for distinct $E$, $E'\in\mathcal{E}$, $(\mathbf{p}, \mathbf{q})\subseteq D=W_0$ holds by Proposition \ref{prop:face}(\ref{item:pqrint}) (since $E\lor E'=C$ in the lattice $(\mathcal{F}_C, \subseteq)$). Therefore to find $W_1$ satisfying conditions in Lemma \ref{lma:general}(\ref{item:genmcfc}), we simply may choose a maximal convex subset $C_E$ of $X\cap\operatorname{rint}E$ for each $E\in\mathcal{E}$ and let $$W_1=W_0\cup\bigcup\{C_E\mid E\in\mathcal{E}\}.$$ For each $E\in\mathcal{E}$, if $X\cap\operatorname{rint}E=\emptyset$ we may let $C_E=\emptyset$. Otherwise, since $\operatorname{dim}(X\cap\operatorname{rint}E)\leq 1$, we may choose a connected component of $X\cap\operatorname{rint}E$ as $C_E$. Since such a component can be coded by a real, we can choose $\seq{C_E\mid E\in\mathcal{E}}$ by using $\mathrm{CC}_\R$.

Nextly let us consider about $W_2$. For each $\{\mathbf{p}\}\in\mathcal{V}$ and any other proper face $F$ of $C$ it holds that $\{\mathbf{p}\}\lor F=C$ in $(\mathcal{F}_C, \subseteq)$, and thus by Proposition \ref{prop:face}(\ref{item:pqrint}), for any $\mathbf{p}\in\bigcup\mathcal{V}$ and any other $\mathbf{q}\in C$ it holds that $(\mathbf{p}, \mathbf{q})\subseteq D\subseteq W_1$. This means that we may simply let $$W_2=W_1\cup(X\cap\bigcup\mathcal{V}).$$

Now suppose $W_{i+2}$ is already chosen for $i<m$. Then we may let
$$
W_{i+3}=
\begin{cases}
W_{i+2}\cup\{\mathbf{{p}_i}\}&\text{if $\mathbf{p}_i\in X$ and $W_{i+2}\cup\{\mathbf{{p}_i}\}$ is convex,}\\
W_{i+2}&\text{otherwise.}
\end{cases}
$$
Then if $m=\omega$ just let $W_\omega=\bigcup_{i<\omega}W_i$. This completes the construction of $\seq{W_i\mid i\leq 2+m}$ as desired.\qed

By Theorems \ref{thm:2and3}(\ref{item:2cc}) and \ref{thm:cctotwo} we have
\begin{cor}\label{cor:ccand2}
$\mathrm{MCV}(2)$ is equivalent to $\mathrm{CC}_\R$.
\end{cor}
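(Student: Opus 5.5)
The corollary follows by combining the two implications already established, so the plan is simply to put them together.

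For the direction $\mathrm{MCV}(2)\Rightarrow\mathrm{CC}_\R$ we invoke Theorem \ref{thm:2and3}(\ref{item:2cc}). Given a sequence $\seq{A_n\mid n<\omega}$ of nonempty sets of reals, its proof builds the open convex region $D_2$ above the polygonal line through the points $\seq{n,n^2}$. It then places a coded copy $B_n$ of $A_n$ on the $n$-th edge, using irrational parameters. Lemma \ref{lma:generalchoice} then shows that any maximal convex subset of $X_2$ selects exactly one point of each $B_n$, and this yields a choice function.

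For the converse $\mathrm{CC}_\R\Rightarrow\mathrm{MCV}(2)$ we invoke Theorem \ref{thm:cctotwo}. Given $X\subseteq\R^2$, its proof splits according to the maximal dimension of a convex subset of $X$. The cases of dimension $0$ and $1$ need no choice. In the $2$-dimensional case, one first builds a maximal open convex subset $D$ of $X$ by recursion along a countable base, and reduces to $D\subseteq X\subseteq\operatorname{cl}D$. One then runs the fine face filtration of $C=\operatorname{cl}D$ through Lemmas \ref{lma:face} and \ref{lma:general}(\ref{item:genmcfc}).

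The only appeal to choice in that construction is the selection of a connected component $C_E$ of $X\cap\operatorname{rint}E$ for each of the countably many $1$-faces $E$. Each such component is coded by a real, and the $1$-faces can be enumerated because $|\mathcal{E}|\leq\omega$ (Proposition \ref{prop:basic}(\ref{item:minusone})). So this selection is exactly an instance of $\mathrm{CC}_\R$ in its semiencodable reformulation.

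No real obstacle remains, since both directions are earlier results. The only point to double-check is that Theorem \ref{thm:cctotwo} uses nothing beyond $\mathrm{CC}_\R$, for instance in enumerating $\mathcal{E}$ and $\mathcal{V}_1$. This holds because the orderings of the $0$-faces inside each $1$-face are fixed uniformly by $<_2$. Combining the two theorems gives the equivalence.
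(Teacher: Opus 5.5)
Your proposal is correct and is exactly the paper's argument: the corollary combines Theorem \ref{thm:2and3}(\ref{item:2cc}) for $\mathrm{MCV}(2)\Rightarrow\mathrm{CC}_\R$ with Theorem \ref{thm:cctotwo} for the converse. Your account of where $\mathrm{CC}_\R$ enters is also accurate: it is used only to choose the components $C_E$ for the countably many $1$-faces, while the enumeration of $\mathcal{V}_1$ is made choice-free by $<_2$.
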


Reflecting the proof of Theorem \ref{thm:cctotwo}, we have the following stronger statement, which we will use later.
\begin{cor}\label{cor:ccborel}
Assume $\mathrm{CC}_\R$. Then for every $X\subseteq\R^2$, there exists a $G_\delta$ subset $S\subseteq\R^2$ such that $X\cap S$ is a maximal convex subset of $X$.
\end{cor}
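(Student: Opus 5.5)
We follow the construction in the proof of Theorem \ref{thm:cctotwo}, checking at each stage that the maximal convex subset $W$ we obtain has the form $X\cap S$ for some $G_\delta$ set $S$. The trivial cases come first. If $k=0$, the maximal convex subset is a single point $\{\mathbf{p}\}$, and we take $S=\{\mathbf{p}\}$, which is closed. If $k=1$, the set $C'$ is a connected component of $L\cap X$, so it is convex and equals $X\cap I$, where $I$ is an interval of $L$ (the convex hull of $C'$ inside $L$). Any interval of a line is $G_\delta$ in $\R^2$, since it is an intersection of a closed line with a countable intersection of open sets, and points and closed segments are closed.

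In the main case $k=2$, we first intersect with $C=\operatorname{cl}D$. The set $X$ was replaced by $X\cap C$, and $C$ is closed, so it suffices to write $W_{2+m}=X\cap S$ with $S$ a $G_\delta$ subset of $C$. We then describe $W_{2+m}$ piece by piece, using the disjoint decomposition of $C$ given by Proposition \ref{prop:face}(\ref{item:rintunion}).
\begin{enumerate}[(i)]
\item $W_0=D$ is open, so $W_0=X\cap D$.
\item On each $1$-face $E\in\mathcal{E}$ we have $C_E=X\cap I_E$, where $I_E$ is either empty or the convex hull of the chosen connected component. This $I_E$ is an interval inside $\operatorname{rint}E$, hence $G_\delta$.
\item The points of $\bigcup\mathcal{V}$ are all included. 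Indeed, $W_{2+m}\cap\operatorname{rbd}C$ agrees with $X$ on $\bigcup\mathcal{V}$, and $W_{2+m}$ contains every point of $X\cap\bigcup\mathcal{V}$.
\item The countably many vertices $\mathbf{p}_i$ with $i<m$ are each either included or excluded.
\end{enumerate}

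We therefore set
$$
S=D\cup\bigcup_{E\in\mathcal{E}}I_E\cup\Bigl(\operatorname{rbd}C\setminus\Bigl(\bigcup_{E\in\mathcal{E}}\operatorname{rint}E\cup\{\mathbf{p}_i\mid i<m,\ \mathbf{p}_i\notin W_{2+m}\}\Bigr)\Bigr).
$$
By the partition of $C$ into relative interiors of faces, $X\cap S=W_{2+m}$: points of $\operatorname{rbd}C$ outside the $1$-faces' interiors are exactly the $0$-faces, namely those in $\mathcal{V}$ and the $\mathbf{p}_i$. It remains to check that $S$ is $G_\delta$, and this is the main obstacle, since a countable union of $G_\delta$ sets need not be $G_\delta$.

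To get around this, we rewrite $S$ as $C\setminus T$ with $T$ an $F_\sigma$ set. Here
$$
T=\bigcup_{E\in\mathcal{E}}(\operatorname{rint}E\setminus I_E)\cup\{\mathbf{p}_i\mid \mathbf{p}_i\notin W_{2+m}\}.
$$
Each set $\operatorname{rint}E\setminus I_E$ is a union of at most two subintervals of a segment, hence $F_\sigma$. Singletons are closed. There are only countably many pieces because $|\mathcal{E}|\le\omega$ and $m\le\omega$, so $T$ is $F_\sigma$. The complement of $T$ in the closed set $C$ is then $G_\delta$.

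The identity $S=C\setminus T$ follows from the facts that $C$ is the disjoint union of $D$, the sets $\operatorname{rint}E$, and the $0$-faces, and that $I_E\subseteq\operatorname{rint}E$. Hence $S$ is $G_\delta$ and $X\cap S=W_{2+m}$ is a maximal convex subset of the original $X$.
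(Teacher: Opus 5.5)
Your proposal is correct and matches the paper's proof. The paper likewise writes the maximal convex set as $X\cap S$ with $S=C\setminus\bigcup_{E\in\mathcal{E}}(\operatorname{rint}E\setminus C_E)\setminus\{\mathbf{p}_i\mid i<m,\ \mathbf{p}_i\notin W_{2+m}\}$, and concludes that $S$ is $G_\delta$ as a countable intersection of $G_\delta$ sets, which is your $F_\sigma$-complement argument after De Morgan. One point to make explicit: in $\mathrm{ZF}$, the step ``a countable union of $F_\sigma$ sets is $F_\sigma$'' needs a justification. You can use $\mathrm{CC}_\R$, which the paper invokes here, or note that each piece is an interval with a canonical decomposition into countably many closed segments.
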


\proof Let $X\subseteq\R^2$ be arbitrary. We may assume $X\not=\emptyset$, and in case $k$ in the proof of Theorem \ref{thm:cctotwo} is $0$ or $1$, we found a maximal convex subset of $X$ which itself is $G_\delta$. So we may assume $k=2$. In this case the maximal convex subset of $X$ found in the proof of Theorem \ref{thm:cctotwo} can be written as $X\cap S$, where
$$
S=C\setminus(\bigcup_{E\in\mathcal{E}}(\operatorname{rint}E\setminus C_E))\setminus\{\mathbf{p}_i\mid i<m\land \mathbf{p}_i\notin W_{2+m}\},
$$
where $C$ is closed, $\mathcal{E}$ is at most countable, $\operatorname{rint}E$ and $C_E$ are convex sets of dimension $\leq 1$ (or empty) and $m\leq\omega$. Note that under $\mathrm{CC}_\R$ every countable intersection of $G_\delta$ subsets of $\R^2$ is $G_\delta$. Therefore $S$ is $G_\delta$.\qed

%% file: convex_moore.tex
Here we give a lemma we will use in the next section, which is a minor variation of a theorem proved by Moore \cite{moore1928:_triods}, which states that there can be at most countably many pairwise disjoint subsets of $\R^2$, each of which is homeomorphic to the union of three distinct closed line segments sharing one endpoint (the shape of letter Y). We will give a full proof of our lemma to make sure that we only need a weak fragment of $\mathrm{AC}$ to prove it, although the basic idea of the proof is due to \cite{moore1928:_triods}. To reduce the amount of our task, we restrict our claim to mention only figures consisting of line segments. Instead, we relax our assumption so that our figures may have more than three line segments. This generalization makes sense because we are restricting our use of $\mathrm{AC}$.

\begin{dfn}
Suppose $V$ is an $\R$-vector space.
A {\it star configuration\/} in $V$ is a pair $\seq{\mathbf{p}, A}$ satisfying $\mathbf{p}\in V$, $A\subseteq V\setminus\{\mathbf{p}\}$, $|A|\geq 3$, and $(\mathbf{p}, \mathbf{q})\cap(\mathbf{p}, \mathbf{r})=\emptyset$ for every two distinct $\mathbf{q}$, $\mathbf{r}\in A$. We call $\mathbf{p}$ as the {\it center\/} of the configuration $\seq{\mathbf{p}, A}$. For a star configuration $\seq{\mathbf{p}, A}$, the {\it star\/} $S_\seq{\mathbf{p}, A}$ of $\seq{\mathbf{p}, A}$ is defined by
$$
S_\seq{\mathbf{p}, A}=\bigcup_{\mathbf{q}\in A}[\mathbf{p}, \mathbf{q}].
$$
For a subset $W$ of $V$, we say $\seq{\mathbf{p}, A}$ is a star configuration in $W$ if $S_\seq{\mathbf{p}, A}\subseteq W$ holds.
\end{dfn}
\begin{thm}[A variation of a theorem of Moore \cite{moore1928:_triods}]\label{thm:moore} Assume $\mathrm{CC}_\R$\footnote{In fact, to prove this theorem we only use the consequence of $\mathrm{CC}_\R$ that every countable union of countable sets of reals is countable.}.
Suppose $\mathcal{C}$ is an uncountable family of star configurations in $\R^2$. Then $\mathcal{C}$ has two distinct members whose stars have nonempty intersection.
\end{thm}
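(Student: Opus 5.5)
We argue by contradiction: suppose $\mathcal{C}$ is an uncountable family of star configurations whose stars are pairwise disjoint. We then produce a countable partition of $\mathcal{C}$ into pieces each containing at most one member, so that $\mathcal{C}$ is countable. The partition is by rational data. For each configuration we pass to a canonical triod with rational ``target'' data, and then apply Moore's separation argument within each class.

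First we shrink each star to a canonical triod $T$ with center $\mathbf{p}$ and three segments. To avoid choice, we use the order $<_2$ on $\R^2$ and take the three $<_2$-least points of $A$. These $\mathbf{q}_1,\mathbf{q}_2,\mathbf{q}_3$ exist without choice only if $A$ has least elements, which it need not. So instead we index by rationals. Choose three rational open disks $U_1,U_2,U_3$ from the fixed enumeration $\seq{U_n\mid n<\omega}$ with pairwise disjoint closures. We also choose a rational disk $U_0\ni\mathbf{p}$ with $\mathbf{p}\notin\operatorname{cl}U_j$, and require that three distinct rays of the star meet $U_1,U_2,U_3$ respectively, with the closures chosen so that $\mathbf{p}$ lies outside a small rational disk around each.

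Call the resulting finite tuple $\tau=\seq{U_1,U_2,U_3}$ of indices the \emph{type} of the configuration. The type can be chosen canonically: take the lexicographically least index tuple realized. This is a definable choice from a countable set, so no $\mathrm{AC}$ is needed. There are only countably many types. Let $\mathcal{C}_\tau$ be the set of configurations of type $\tau$. Each $\mathcal{C}_\tau$ is a set of configurations, and $\mathcal{C}=\bigcup_\tau\mathcal{C}_\tau$. It therefore suffices to show that each $\mathcal{C}_\tau$ has at most two members; tightening the type to also fix a rational disk around the center will give at most one.

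The key geometric step is Moore's argument. Suppose three configurations of the same type $\tau$ have pairwise disjoint stars. Each star contains an arc from its center to each of $U_1,U_2,U_3$, and stopping at the first hit of $\operatorname{cl}U_j$ makes the arcs polygonal. Connect the disks $U_1,U_2,U_3$ to a common point outside; equivalently, contract each $U_j$ to a point. This yields three disjoint triods joining the same three ``terminals''. That gives a planar embedding of $K_{3,3}$, since the three centers and the three terminals would be joined pairwise by disjoint arcs.

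Turning this into a contradiction without citing the Jordan curve theorem is the main obstacle. We would prove it directly for polygonal arcs. Two of the triods, together with the disks, form a polygonal Jordan curve through $U_1$ and $U_2$ via the center of $T$ and the center of $T'$. Polygonal Jordan curve separation is provable in $\mathrm{ZF}$ by a parity-of-crossings argument. The third triod's center and the arcs to $U_3$ then force an intersection: one checks that $U_3$ and the two arcs from the first two triods into $U_3$ lie on opposite sides of one of the three Jordan curves formed by pairs. This is exactly the classical $K_{3,3}$ argument.

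Hence each $\mathcal{C}_\tau$ has at most $2$ members. It is therefore countable, with an enumeration definable from $<_2$ applied to the centers. Then $\mathcal{C}$ is a countable union of finite sets of reals, which are uniformly well-ordered by $<_2$, so $\mathcal{C}$ is countable. This contradicts the assumption that $\mathcal{C}$ is uncountable. This last step uses only that countable unions of countable sets of reals are countable, consistent with the footnote; with the uniform finite orderings even that is barely needed.
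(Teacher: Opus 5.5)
Your overall plan (countably many canonical rational types, a planarity argument inside each class, then choice-free counting) could work, but the key geometric step fails. With your notion of type, it is false that three configurations of the same type cannot have pairwise disjoint stars. The reduction to $K_{3,3}$ by ``contracting each $U_j$ to a point'' is not legitimate. Contraction preserves disjointness of the nine arcs only if nothing except the designated arm-ends enters $\operatorname{cl}U_j$, and your type allows an arm, of the same star or of another one, to pass through $U_k$ on its way to $U_j$.

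Here is a concrete counterexample. Let $U_1,U_2,U_3$ be the unit disks centred at $(0,0)$, $(10,0)$, $(20,0)$. Take three stars with the following centres and arm endpoints:
\begin{itemize}
\item centre $(10,5)$, endpoints $\{(0,\frac{9}{10}),(10,\frac{9}{10}),(20,\frac{9}{10})\}$;
\item centre $(-10,0)$, endpoints $\{(0,-\frac12),(10,0),(20,-\frac12)\}$;
\item centre $(-20,0)$, endpoints $\{(0,\frac{3}{10}),(10,\frac{3}{10}),(20,\frac{3}{10})\}$.
\end{itemize}
The first star lies in $\{y\ge\frac{9}{10}\}$ and the second in $\{y\le 0,\ x\ge -10\}$. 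The third lies in $\{0\le y\le\frac{3}{10}\}$ and meets $\{y=0\}$ only at its centre $(-20,0)$. So the stars are pairwise disjoint. Yet each has three distinct arms meeting $U_1,U_2,U_3$ respectively, and each centre lies outside every $\operatorname{cl}U_j$.

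Moving the third centre to $(-\frac{101}{10},0)$ keeps all three stars disjoint and puts two centres in one tiny rational disk. So adding $U_0$ to the type does not give ``at most one'' either. Your argument never uses that $\tau$ is the lexicographically least realized tuple, so it establishes nothing about the size of $\mathcal{C}_\tau$. Separately, the non-planarity of $K_{3,3}$ is only gestured at. It is true and provable in $\mathrm{ZF}$ for polygonal embeddings, but the sentence about ``opposite sides of one of the three Jordan curves'' is not yet an argument.

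You can rescue your route by strengthening the type: require that each chosen arm $[\mathbf{p},\mathbf{a}_j]$ misses $\operatorname{cl}U_k$ for all $k\neq j$. Every star has such a type: take tiny rational disks around interior points of three of its arms. Now truncate each arm at its first point of $\operatorname{cl}U_j$. Each closed disk then meets the three truncated triods in exactly three boundary points. Joining these radially to the centre of $U_j$ gives a genuine polygonal $K_{3,3}$. Together with a real proof of its non-planarity, this yields at most two configurations per type, and your choice-free counting finishes.

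The paper takes a shorter route that avoids $K_{3,3}$ entirely. It fixes a circle $C$ around the centres that every long arm crosses exactly once, with the three crossing points lying in pairwise non-adjacent arcs of a fixed partition of $C$. One star then cuts the disc into three curvilinear sectors. The arm of a second star of the same type that runs to the arc outside its own sector must cross the first star. So already two configurations of the same type meet, and the only planar topology needed is that a segment leaving such a sector crosses its boundary.
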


\proof Suppose $\mathcal{C}$ is an uncountable family of star configurations in $\R^2$. We may assume that the centers of configurations in $\mathcal{C}$ are all distinct. Thus $\mathcal{C}$ is $\R$-encodable, and by $\mathrm{CC}_\R$, pick an uncountable $\mathcal{C}_0\subseteq\mathcal{C}$ and $k<\omega$ such that for every $\seq{\mathbf{p}, A}\in\mathcal{C}_0$, $A$ contains at least three points whose distance from $\mathbf{p}$ is larger than $\frac{1}{k}$. Again by $\mathrm{CC}_\R$, pick an uncountable $\mathcal{C}_1\subseteq\mathcal{C}_0$ and $\mathbf{p}_0\in \R^2$ such that every center of configuration in $\mathcal{C}_1$ is in $B(\mathbf{p}_0; \frac{1}{3k})$. Let $D=B(\mathbf{p}_0; \frac{2}{3k})$ and $C=\partial D$. For each configuration $\seq{\mathbf{p}, A}\in\mathcal{C}_1$, $S_\seq{\mathbf{p}, A}\cap C$ contains at least three points. By $\mathrm{CC}_\R$, pick an uncountable $\mathcal{C}_2\subseteq\mathcal{C}_1$ and $l<\omega$ such that $S_\seq{\mathbf{p}, A}\cap C$ has three points which separates $C$ into three arcs each of which has central angle $\geq\frac{2\pi}{l}$ for every $\seq{\mathbf{p}, A}\in\mathcal{C}_2$. Clearly
$l\geq 3$ holds. Now fix a partition $\mathcal{P}$ of $C$ into $2l$ halfopen arcs of the same central angle $\frac{\pi}{l}$.
Then for each $\seq{\mathbf{p}, A}\in\mathcal{C}_2$, $S_\seq{\mathbf{p}, A}$ has points in at least three arcs in $\mathcal{P}$ which are pairwise non-adjacent to each other.
Now pick an uncountable $\mathcal{C}_3\subseteq\mathcal{C}_2$ and three distinct arcs $\gamma_i$ ($0\leq i\leq 2$) in $\mathcal{P}$ which are pairwise non-adjacent to each other, such that $S_\seq{\mathbf{p}, A}$ has points in every $\gamma_i$ ($0\leq i\leq 2$) for every $\seq{\mathbf{p}, A}\in\mathcal{C}_3$. Now pick two distinct configurations $\seq{\mathbf{p}, A}$, $\seq{\mathbf{q}, B}\in\mathcal{C}_3$ and we will show that the stars of them intersect. Pick $\mathbf{r}_i\in S_\seq{\mathbf{p}, A}\cap\gamma_i$ for each $i$ ($0\leq i\leq 2$). Let $\beta_{01}$ be the arc of $C$ with endpoints $\mathbf{r}_0$ and $\mathbf{r}_1$, not containing $\mathbf{r}_2$. Let $U_{01}$ be the open subset of $D$ which has the union of $\beta_{01}$, $[\mathbf{p}, \mathbf{r}_0]$ and $[\mathbf{p}, \mathbf{r}_1]$ as its boundary. Similarly define $\beta_{02}$, $\beta_{12}$ and $U_{02}$, $U_{12}$. Remember that $\mathbf{q}\in D$. If $\mathbf{q}$ is on some $[\mathbf{p}, \mathbf{r}_i]$ we are already done. So we may assume, without loss of generality, that $\mathbf{q}\in U_{01}$. Note that $B$ has a point $\mathbf{s}\in\gamma_2$, but since $\gamma_2$ lies in the exterior of $U_{01}$, $[\mathbf{q}, \mathbf{s}]$ must intersect with the boundary of $U_{01}$. But since $\mathbf{q}\in D$ and $\mathbf{s}\in C$, $[\mathbf{q}, \mathbf{s}]$ cannot intersect with $C$ at points other than $\mathbf{s}$. Therefore $[\mathbf{q}, \mathbf{s}]$ must intersect with either $[\mathbf{p}, \mathbf{r}_0]$ or $[\mathbf{p}, \mathbf{r}_1]$. This shows that $S_\seq{\mathbf{p}, A}$ and $S_\seq{\mathbf{q}, B}$ intersect.\qed

\smallskip
As an application of Theorem \ref{thm:moore}, we show graphs realizable in a $2$-dimensional manifold (in a somewhat strong sense) have a strong structural constraint.
\begin{dfn}
A (simple) {\it graph\/} is a pair $\mathcal{G}=\seq{G, E}$ of a set $G$ and $E\subseteq[G]^2$. For an $\R$-vector space $V$ and $W\subseteq V$, we say a graph $\seq{G, E}$ is {\it linearly realizable\/} in $W$ if there exists an injective map $f:G\to W$ such that
\begin{enumerate}[(1)]
\item For every $e=\{v, w\}\in E$, $(f(v), f(w))\subseteq W\setminus f''G$ (we will write the open line segment $(f(v), f(w))$ as $f(e)$).
\item For every two distinct $e$, $e'\in E$, $f(e)\cap f(e')=\emptyset$.
\end{enumerate}
We say $f$ is a {\it linear realization\/} of $\mathcal{G}$ in $W$. 
\end{dfn}

\begin{cor}\label{cor:deg3} Assume $\mathrm{CC}_\R$. Suppose $D$ is a nonempty open convex subset of $\R^3$. If a graph $\mathcal{G}$ is linearly realizable in $\partial D$, then $\mathcal{G}$ has at most countably many vertices of degree $\geq 3$.
\end{cor}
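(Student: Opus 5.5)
We reduce to Theorem \ref{thm:moore} by projecting pieces of $\partial D$ onto planes. Suppose toward a contradiction that $f$ is a linear realization of $\mathcal{G}=\seq{G,E}$ in $\partial D$ and that the set $G_3$ of vertices of degree $\geq 3$ is uncountable. For each $v\in G_3$ we get a star configuration $\seq{f(v),A_v}$ in $\partial D$, where $A_v$ is the set of $f(w)$ with $\{v,w\}\in E$. It is a star configuration because the open segments $f(e)$ are pairwise disjoint and avoid $f''G$. Its star lies in $\partial D$, since $f(e)\subseteq\partial D$ and the endpoints are in $f''G\subseteq\partial D$. Stars of distinct $v,v'\in G_3$ can only meet at points of $f''G$, namely at a common neighbour or at $f(v')\in A_v$. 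To avoid this, we shrink each edge: replace $A_v$ by the set of midpoints of the segments $[f(v),f(w)]$. The shrunken stars of distinct $v,v'$ are then pairwise disjoint. Indeed, the half-edges at $v$ and at $v'$ lie either in distinct edges, which are disjoint away from vertices, or in the same edge $\{v,v'\}$, where the two halves are disjoint once the midpoint is excluded from one side. To be safe we shrink to the first third instead of the midpoint. Since $f$ is injective, $v\mapsto f(v)$ identifies $G_3$ with a subset of $\R^3$, so everything is $\R$-encodable.

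Next we flatten. Fix a point $\mathbf{o}\in D$ and a countable cover of $\partial D$ by open pieces of the following form. For a rational direction $\mathbf{u}$ and a rational open ball $U$ in the plane $\mathbf{u}^\perp$, the piece is the set of points of $\partial D$ near the supporting region in direction $\mathbf{u}$ whose orthogonal projection to $\mathbf{u}^\perp$ lies in $U$. More precisely, $\partial D$ is locally the graph of a convex function over a suitable coordinate plane, and we cover $\partial D$ by countably many open sets $O_j$ such that the orthogonal projection $\pi_j$ onto a fixed rational plane is injective on $\operatorname{cl} O_j\cap\partial D$. We may take $\pi_j$ to be projection along a rational direction $\mathbf{u}_j$, with $O_j$ the part of $\partial D$ lying strictly on the ``front'' side, where lines parallel to $\mathbf{u}_j$ through $D$ meet the boundary. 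Such a family can be written down explicitly without choice.

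By $\mathrm{CC}_\R$, as in the proof of Theorem \ref{thm:moore} (a countable union of countable sets of reals is countable), there is some $j$ such that uncountably many $v\in G_3$ have $f(v)\in O_j$. Shrinking the star of each such $v$ further, by a rational factor chosen uniformly, say the least $1/n$ that works, we may keep the whole shrunken star inside $O_j$, still with at least three rays. A further pigeonhole via $\mathrm{CC}_\R$ fixes a single $n$ for uncountably many $v$.

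The key geometric point, and the main obstacle, is that $\pi_j$ must send these stars to star configurations in $\R^2$. Since $\pi_j$ is affine, segments go to segments. Since it is injective on $O_j\cap\partial D$, distinct stars stay disjoint and the rays of a single star stay disjoint except at the center. A ray $[\mathbf{p},\mathbf{q}]$ cannot project to a point, because then it would lie on a line parallel to $\mathbf{u}_j$, contradicting injectivity. So the projections form an uncountable family of star configurations in $\R^2$ with pairwise disjoint stars, which contradicts Theorem \ref{thm:moore}. The care needed is in setting up the $O_j$. One option is to use the graph representation $\partial D\cap(\text{cylinder})=\{\mathbf{x}+g(\mathbf{x})\mathbf{u}\}$, valid on the part of $\partial D$ whose outer normals have positive $\mathbf{u}$-component, and check that these parts, over the countably many rational $\mathbf{u}$, cover $\partial D$. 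They do: every boundary point has some outer normal $\nu$, and any rational $\mathbf{u}$ close to $\nu$ works, because the point then lies in the open region where $\partial D$ is the graph over $\mathbf{u}^\perp$ of the upper envelope.
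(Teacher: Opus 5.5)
Your overall strategy matches the paper's. You shrink the edges at each vertex of degree $\geq 3$ to their first third so the stars become pairwise disjoint, localize to a piece of $\partial D$ on which an orthogonal projection to a plane is injective, and contradict Theorem~\ref{thm:moore}. The one structural difference is the localization. The paper uses $\mathrm{CC}_\R$ to get a condensation point $\mathbf{p}_0\in\partial D$ and builds a single neighbourhood $U_0$ of it. You cover $\partial D$ by countably many explicitly indexed pieces and pigeonhole. Both use $\mathrm{CC}_\R$ in the same way, and either works.

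\textbf{The gap.} It lies in the step you yourself call the crux. Your justification that the pieces cover $\partial D$ (``every boundary point has some outer normal $\nu$, and any rational $\mathbf{u}$ close to $\nu$ works'') is false at non-smooth boundary points. Take $D=(0,1)^3$, the edge point $\mathbf{p}=(1,\frac12,1)$, the outer normal $\nu=(0,0,1)$ and $\mathbf{u}=(-\varepsilon,0,1)$. For $\mathbf{x}=(1,y,z)$ on the side face with $z<1$, the points $\mathbf{x}$ and $\mathbf{x}+(1-z)\mathbf{u}=(1-\varepsilon(1-z),y,1)$ both lie in $\partial D$. They have the same projection along $\mathbf{u}$, and they tend to $\mathbf{p}$ as $y\to\frac12$, $z\to 1$. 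So projection along $\mathbf{u}$ is injective on no neighbourhood of $\mathbf{p}$ in $\partial D$. Under either reading of your region the step fails. The outer normal $(1,0,0)$ at $\mathbf{p}$ has negative $\mathbf{u}$-component, so $\mathbf{p}$ is not in the region where all outer normals have positive $\mathbf{u}$-component. The region where some outer normal does is not open.

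\textbf{The fix.} The right condition is that $\mathbf{u}$ have positive inner product with \emph{every} nonzero outer normal at $\mathbf{p}$, equivalently $\mathbf{p}-t\mathbf{u}\in D$ for some $t>0$. That is, take exactly the set you describe earlier (``where lines parallel to $\mathbf{u}_j$ through $D$ meet the boundary''):
$O_\mathbf{u}=\{\mathbf{x}\in\partial D\mid \mathbf{x}-t\mathbf{u}\in D\text{ for some }t>0\}$.
\begin{itemize}
\item It is open in $\partial D$ because $D$ is open.
\item Projection along $\mathbf{u}$ is injective on it. If $\mathbf{x},\mathbf{x}+s\mathbf{u}\in O_\mathbf{u}$ with $s>0$, then $\mathbf{x}$ lies in the open segment between $\mathbf{x}-t\mathbf{u}\in D$ and $\mathbf{x}+s\mathbf{u}\in\operatorname{cl}D$, hence in $D$, which is impossible.
\item The sets $O_\mathbf{u}$, $\mathbf{u}\in\Q^3\setminus\{\mathbf{0}\}$, cover $\partial D$: fix $\mathbf{c}\in D$ and take $\mathbf{u}$ rational and close to $\mathbf{p}-\mathbf{c}$.
\end{itemize}
This is precisely the paper's Claim (projection along the direction from an interior point $\mathbf{c}$ to $\mathbf{p}_0$). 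With it your argument goes through.

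It also removes your second shrinking and the pigeonhole on $n$. If $\mathbf{p}-t\mathbf{u}\in D$ and $\mathbf{q}\in\operatorname{cl}D$, then for $0\leq\lambda<1$ the point $\mathbf{x}=(1-\lambda)\mathbf{p}+\lambda\mathbf{q}$ satisfies $\mathbf{x}-(1-\lambda)t\mathbf{u}\in D$. So every first-third star whose center lies in $O_\mathbf{u}$ lies entirely in $O_\mathbf{u}$.

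Finally, your claimed injectivity on $\operatorname{cl}O_j\cap\partial D$ fails in general, but you never use it. For example, take $D=\{(x,y,z)\mid 0<x<1,\ |y|<1,\ -5<z<-y^2/x\}$ and $\mathbf{u}=(0,0,1)$; the closure of $O_\mathbf{u}$ contains $\{(0,0)\}\times[-5,0]$.
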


\proof Suppose $D$ is a nonempty open convex subset of $\R^3$ and $\mathcal{G}=\seq{G, E}$ is a graph linearly realizable in $\partial D$. By identifying the vertices in $\mathcal{G}$ with those of its realization, we may assume $G\subseteq\partial D$. For each $\mathbf{p}\in G$, let $N_\mathbf{p}=\{\mathbf{q}\in G\mid\{\mathbf{p}, \mathbf{q}\}\in E\}$. Now let $G'$ denote the set of vertices of degree $\geq 3$ in the graph $\mathcal{G}$. Suppose $G'$ is uncountable. By $\mathrm{CC}_\R$, there exists $\mathbf{p}_0\in\R^3$ such that $G'\cap U$ is uncountable for every neighborhood $U$ of $\mathbf{p}_0$ in $\R^3$. Since $G'\subseteq\partial D$ and $\partial D$ is closed in $\R^3$, $\mathbf{p}_0\in\partial D$.

\smallskip\noindent
(Claim) There exists an open neighborhood $U_0$ of $\mathbf{p}_0$ in $\partial D$ and a plane $P$ in $\R^3$ such that the orthogonal projection $\pi: \R^3\to P$ is injective on $U_0$.

\smallskip\noindent
(Proof of Claim) Pick $\mathbf{c}\in D$, and let $P$ be the plane containing $\mathbf{c}$ and orthogonal to the line containing $\mathbf{c}$ and $\mathbf{p}_0$. Let $H$ be the open halfspace determined by $P$ containing $\mathbf{p}_0$ and pick an open ball $B$ with center $\mathbf{c}$ such that $B\subseteq D$. Note that $B\cap P$ is an open disc with center $\mathbf{c}$. Let $\pi:\R^3\to P$ be the orthogonal projection, and let $U_0=\pi^{-1}(B\cap P)\cap H\cap\partial D$. Since $H$ is open and $B\cap P$ is open relative to $P$, $U_0$ is an open neighborhood of $\mathbf{p}_0$ in $\partial D$. Note that for each $\mathbf{q}\in B\cap P$, $\pi^{-1}(\mathbf{q})\cap H$ is a halfline with the endpoint $\mathbf{q}\in D$, and thus it can have at most one point in $\partial D$ by Proposition \ref{prop:face}. Therefore $\pi$ is injective on $U_0$.\qed(Claim)

Fix $U_0$, $P$ and $\pi$ as in Claim. By the choice of $\mathbf{p}_0$, $G'\cap U_0$ is uncountable. For each $\mathbf{p}\in G'\cap U_0$ and $\mathbf{q}\in N_\mathbf{p}$, let $n_{\mathbf{p}, \mathbf{q}}$ be the least $n<\omega$ such that
$$
[\mathbf{p}, \frac{(n+2)\mathbf{p}+\mathbf{q}}{n+3}]\subseteq U_0
$$
and let
$$
A_\mathbf{p}=\{\frac{(n_{\mathbf{p}, \mathbf{q}}+2)\mathbf{p}+\mathbf{q}}{n_{\mathbf{p}, \mathbf{q}}+3}\mid\mathbf{q}\in N_\mathbf{p}\}.
$$
Then for every $\mathbf{p}\in G'\cap U_0$, $\seq{\mathbf{p}, A_\mathbf{p}}$ is a star configuration in $U_0$. Moreover since $\mathcal{G}$ is linearly realized, it is clear that $\{S_\seq{\mathbf{p}, A_\mathbf{p}}\mid\mathbf{p}\in G'\cap U_0\}$ is pairwise disjoint. Now since $\pi$ is affine and injective on $U_0$, for each $\mathbf{p}\in G'\cap U_0$, $\seq{\pi(\mathbf{p}), \pi''A_\mathbf{p}}$ is a star configuration in $B\cap P\subseteq P$, and $\{S_\seq{\pi(\mathbf{p}), \pi''A_\mathbf{p}}\mid\mathbf{p}\in G'\cap U_0\}$ is pairwise disjoint as well. This contradicts Theorem \ref{thm:moore}.\qed

It is likely that one can in fact prove a `topological version' of Corollary \ref{cor:deg3}, that is, one can prove that every graph which is `realizable' (in a way edges are realized as simple curves rather than line segments) in a $2$-dimensional manifold has at most countably many vertices of degree $\geq 3$, only using a weak fragment of $\mathrm{AC}$. But we do not work out this version here, because the proof could be much longer, involving us with treatments of some topological theorems like the Jordan Curve Theorem.

%% file: convex_uniftomcv3.tex
In this section we prove the following.
\begin{thm}\label{lma:uniftothree}
$\mathrm{Unif}_\R$ implies $\mathrm{MCV}(3)$. 
\end{thm}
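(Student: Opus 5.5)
The plan follows the proof of Theorem \ref{thm:cctotwo}: build a fine face filtration of a suitable closed convex set and construct $\seq{W_\gamma}$ as in Lemma \ref{lma:general}(\ref{item:genmcfc}). Uniformization is used wherever uncountably many independent choices of real-codable objects are needed. First, $\mathrm{Unif}_\R$ implies $\mathrm{DC}_\R$ and hence $\mathrm{CC}_\R$ (Proposition \ref{prop:easy}), so Theorem \ref{thm:cctotwo}, Corollary \ref{cor:ccborel}, Theorem \ref{thm:moore} and Corollary \ref{cor:deg3} are all available.

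Let $X\subseteq\R^3$ be nonempty and let $k$ be the maximal dimension of a convex subset of $X$. If $k\leq 1$, argue exactly as in Theorem \ref{thm:cctotwo}. If $k=2$, fix a $2$-dimensional convex $C_0\subseteq X$ and its plane $L$. Every convex subset of $X$ of dimension $2$ lies in its own plane, so it suffices to run the $k=2$ construction of Theorem \ref{thm:cctotwo} inside $L\cap X$. That construction starts from a nonempty relatively open convex set, so its output has dimension $2$ and is maximal in $X$. If $k=3$, use the enumerated open basis as before to get a maximal open convex $D\subseteq X$, set $C=\operatorname{cl}D$, and reduce to $D\subseteq X\subseteq C$. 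Then $W_0=D$.

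Next, classify the proper faces of $C$. The $2$-faces are countably many by Proposition \ref{prop:basic}(\ref{item:minusone}). Each $2$-face has only countably many $1$-faces, so the $1$-faces lying in some $2$-face form a countable set $\mathcal{E}_0$; the remaining $1$-faces form $\mathcal{E}_1$, possibly uncountable. For $e\in\mathcal{E}_1$ and any other proper face $F$ not containing it, we have $e\lor F=C$, so condition (d) of Lemma \ref{lma:general}(\ref{item:genmcfc}) is automatic because $D\subseteq W_\gamma$.

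The filtration proceeds in the following order.
\begin{enumerate}[(1)]
\item Add all $2$-faces at once. For each $2$-face $E$ choose a maximal convex subset of $X\cap\operatorname{rint}E$ via Corollary \ref{cor:ccborel} (applied in the plane of $E$), choosing the $G_\delta$ codes by $\mathrm{CC}_\R$.
\item Add the faces in $\mathcal{E}_0$ one per successor step along an enumeration. At each step take a connected component of the set of admissible points $\{\mathbf{p}\in X\cap\operatorname{rint}e\mid W_\gamma\cup\{\mathbf{p}\}\text{ is convex}\}$. Taking one face at a time makes fineness and condition (d) trivial.
\item Add all of $\mathcal{E}_1$ in one step, choosing for every $e\in\mathcal{E}_1$ a connected component of its admissible set. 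The $1$-faces and these components are $\R$-encodable, so $\mathrm{Unif}_\R$ yields the whole family at once.
\item Add the $0$-faces.
\end{enumerate}
The two kinds of $0$-faces not adjacent to any $1$-face (those adjacent to nothing, and extreme points inside a $2$-face with no $1$-face through them) are handled as follows. A $0$-face contained in no face other than $C$ can be added freely, as $\mathcal{V}$ was in Theorem \ref{thm:cctotwo}. Extreme points inside a $2$-face $F$ but in no $1$-face are better handled by working with $X\cap F$ relative to the plane of $F$ already in step (1). I would try to absorb them there via Corollary \ref{cor:ccborel}, but this compatibility needs checking.

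The main obstacle is the $0$-faces lying on $1$-faces, which may be uncountably many. Two such $0$-faces $\mathbf{p},\mathbf{p}'$ interact only when they are the two endpoints of a common $1$-face $e$. In that case their join is $e$, and both can be added only if the chosen component fills $\operatorname{rint}e$. Consider the graph whose vertices are these $0$-faces and whose edges are the $1$-faces. It is linearly realized in $\partial D$, so by Corollary \ref{cor:deg3} it has only countably many vertices of degree $\geq 3$; add those one at a time first. The remaining vertices have degree $\leq 2$, so the rest of the graph splits into finite paths, $\omega$- or $\Z$-paths, and cycles. Each component is countable and codable by a real, and the components are $\R$-semiencodable, so a maximal admissible set of vertices within each component (essentially a maximal independent set along a path or cycle) can be chosen uniformly by $\mathrm{Unif}_\R$. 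All such vertices are then added in a single last step, since vertices in different components have join $C$.

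Verifying that this ordering really gives a fine face filtration, in particular that the faces added in each single step are pairwise incomparable, is where I expect the bulk of the work to lie.
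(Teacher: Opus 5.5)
Up to the $1$-faces your plan is the paper's: $W_0=D$, all $2$-faces at once via Corollary \ref{cor:ccborel} and $\mathrm{CC}_\R$, the $1$-faces lying in no $2$-face at once via $\mathrm{Unif}_\R$, and the countably many $1$-faces lying in $2$-faces one at a time via $\mathrm{DC}_\R$. You swap the order of the last two steps, which is harmless.

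The gap is in the $0$-face step. You assert that two $0$-faces interact only when they are the endpoints of a common $1$-face, and that vertices in different components of the $1$-skeleton have join $C$. Both are false. The join of two $0$-faces $\mathbf{p},\mathbf{p}'$ can be a $2$-face $F$, and then both can be kept only if the chord $(\mathbf{p},\mathbf{p}')\subseteq\operatorname{rint}F$ lies in $C_F$.

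For instance, let $C=\{\seq{x,y,z}\mid x^2+y^2\leq 1,\ 0\leq z\leq 1\}$ and $X=\operatorname{int}C\cup\{\seq{x,y,0}\mid x^2+y^2<1,\ y\neq 0\}\cup\{\seq{1,0,0},\seq{-1,0,0}\}$. Then:
(i) the bottom disc $F$ gets an open half-disc as $C_F$;
(ii) the vertical segments are $1$-faces lying in no $2$-face, and $X$ meets none of them in its relative interior;
(iii) both points $\seq{\pm1,0,0}$ are admissible;
(iv) in the $1$-skeleton each of them is adjacent only to $\seq{\pm1,0,1}\notin X$.
Your procedure therefore adds both points, and the resulting set omits the diameter joining them, so it is not convex.

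The same phenomenon undermines your tentative absorption, into step (1), of the extreme points of $F$ lying on no $1$-face. That puts $F$ and $\{\mathbf{p}\}\subsetneq F$ into one step, so fineness is lost. Moreover such a $\mathbf{p}$ can lie in two $2$-faces, whose choices would then have to be coordinated.

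What is needed is the paper's graph $\mathcal{H}$. Its vertices are all admissible $0$-faces, i.e.\ all $0$-faces $\mathbf{p}\in X$ with $W_{2+m}\cup\{\mathbf{p}\}$ convex. A pair $\{\mathbf{p},\mathbf{q}\}$ is an edge iff $(\mathbf{p},\mathbf{q})\cap W_{2+m}=\emptyset$; otherwise $(\mathbf{p},\mathbf{q})\subseteq W_{2+m}$ automatically.

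The edges of $\mathcal{H}$ include chords of $2$-faces, and these can cross. In the cylinder with $X\cap\operatorname{rint}F=\emptyset$ and $\partial F\subseteq X$, the admissible points of $\partial F$ are pairwise adjacent. So $\mathcal{H}$ is not linearly realizable, and Corollary \ref{cor:deg3} cannot be applied as is.

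The missing idea is the paper's preliminary step. Along an enumeration $\seq{F_i\mid i<l}$ of the $2$-faces, use $\mathrm{DC}_\R$ to pick in each $F_i$ one vertex independent from those picked so far; call the result $Q_l$. Then restrict to the vertices independent from $Q_l$.

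After this, every $2$-face $F$ containing a remaining vertex contains some $\mathbf{r}\in Q_l$. Suppose remaining chords $(\mathbf{p},\mathbf{p}')$ and $(\mathbf{q},\mathbf{q}')$ crossed in $F$. Both $(\mathbf{p},\mathbf{r})$ and $(\mathbf{p}',\mathbf{r})$ lie in $W_{2+m}$, and one of them would have to meet $[\mathbf{q},\mathbf{q}']$, which misses $W_{2+m}$. This contradiction shows the restricted graph is linearly realized in $\partial C$, and only then does Corollary \ref{cor:deg3} apply.

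A smaller point concerns the degree-$\leq 2$ components. Choosing a maximal independent set in each component via $\mathrm{Unif}_\R$ is a choice over an index set you only know to be $\R$-semiencodable, which is an $\mathrm{EQ}_\R$-type step. The paper needs no choice there:
(i) finite components via a fixed ordering of $\R^3$;
(ii) $\omega$-paths from their endpoint;
(iii) $\mathbb{Z}$-paths by cutting at a rational threshold on a coordinate.
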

\proof  We proceed in the same way as in the proof of Theorem \ref{thm:cctotwo}.
Suppose $X\subseteq\R^3$ and we will find a maximal convex subset of $X$. We may assume that $X$ contains a convex subset of dimension $3$, because otherwise our task is reduced to the case of $\R^2$. By the same argument as in the proof of Theorem \ref{thm:cctotwo}, we can find a nonempty open maximal convex subset $D$ of $X$, and again let us find a maximal convex subset of $X$ by extending $D$. For this purpose we may assume that $D\subseteq X\subseteq C$, where $C=\operatorname{cl}D$.

Let
\begin{eqnarray*}
\mathcal{G}&=&\text{the set of $2$-faces of $C$,}\\
\mathcal{E}_2&=&\text{the set of $1$-faces of $C$ contained in some $2$-face of $C$,}\\
\mathcal{E}&=&\text{the set of other $1$-faces of $C$,}\\
\mathcal{V}&=&\text{the set of $0$-faces of $C$.} 
\end{eqnarray*}

Note that $|\mathcal{G}|\leq\omega$ by Proposition \ref{prop:basic}(\ref{item:minusone}). Let $\seq{F_i\mid i<l}$ ($l\leq\omega$) be an enumeration of $\mathcal{G}$. Since each $2$-face of $C$ contains at most countably many $1$-faces of $C$ by Proposition \ref{prop:basic}(\ref{item:faceofface}) and (\ref{item:minusone}), we also have $|\mathcal{E}_2|\leq\omega$ (Here we use $\mathrm{CC}_\R$, together with the fact that the set of countable families of $1$-dimensional convex sets in $\R^3$ is $\R$-semiencodable). Let $\seq{E_i\mid i<m}$ ($m\leq\omega$) be an enumeration of $\mathcal{E}_2$.

Now let
\begin{eqnarray*}
\mathcal{F}_0&=&\{C\},\\
\mathcal{F}_1&=&\mathcal{F}_0\cup\mathcal{G},\\
\mathcal{F}_2&=&\mathcal{F}_1\cup\mathcal{E},\\
\mathcal{F}_{i+3}&=&\mathcal{F}_{i+2}\cup\{E_i\}\quad(\text{for $i<m$}),\\
\mathcal{F}_\omega&=&\bigcup_{i<\omega}\mathcal{F}_i\quad(\text{if $m=\omega$}),\\
\mathcal{F}_{2+m+1}&=&\mathcal{F}_{2+m}\cup\mathcal{V}.
\end{eqnarray*}

Then $\seq{\mathcal{F}_i\mid i\leq 2+m+1}$ is a fine face filtration of $C$. Let $\seq{\mathcal{F}^\circ_i\mid i\leq 2+m+1}$
be the fine convex filtration derived as in Lemma \ref{lma:face}. We will find $\seq{W_i\mid i\leq 2+m+1}$ satisfying conditions in Lemma \ref{lma:general}(\ref{item:genmcfc}). Then $W_{2+m+1}$ will be a maximal convex subset of $X$.

\smallskip\noindent
(Choice of $W_0$) By the same argument as in the proof of Theorem \ref{thm:cctotwo} we may set $W_0=D$.

\smallskip\noindent
(Choice of $W_1$) By fineness $W_0\cup\{\mathbf{p}\}=\mathcal{F}^\circ_0\cup\{\mathbf{p}\}$ is convex for each $\mathbf{p}\in\mathcal{F}^\circ_1\setminus\mathcal{F}^\circ_0$. Note that $\mathcal{F}^\circ_1\setminus\mathcal{F}^\circ_0=\coprod_{F\in\mathcal{G}}\mathrm{rint} F$, and for any $\mathbf{p}\in\mathrm{rint} F$ and $\mathbf{q}\in\mathrm{rint}F'$ for distinct $F$, $F'\in\mathcal{G}$, $(\mathbf{p}, \mathbf{q})\subseteq D=W_0$ holds by Proposition \ref{prop:face}(\ref{item:pqrint}) (since $F\lor F'=C$ in the lattice $(\mathcal{F}_C, \subseteq)$). Therefore to find $W_1$ satisfying conditions in Lemma \ref{lma:general}(\ref{item:genmcfc}), we may choose a maximal convex subset $C_F$ of $X\cap\mathrm{rint} F$ for each $F\in\mathcal{G}$ and let
$$
W_1=W_0\cup\bigcup\{C_F\mid F\in\mathcal{G}\}.
$$
By Corollary \ref{cor:ccborel} $\mathrm{CC}_\R$ implies that for each $F\in\mathcal{G}$ there exists a $G_\delta$ subset $S$ of $\mathrm{aff}F$ such that $X\cap S$ is a maximal convex subset of $X\cap F$. Since a $G_\delta$ subset of a plane in $\R^3$ is also $G_\delta$ in $\R^3$, and the set of $G_\delta$ subsets of $\R^3$ is $\R$-semiencodable, we may apply $\mathrm{CC}_\R$ once more to obtain $\seq{S_F\mid F\in\mathcal{G}}$ such that each $S_F$ is a $G_\delta$ subset of $\mathrm{aff}F$ and that $X\cap S_F$ is a maximal convex subset of $X\cap F$. So we may let $C_F=X\cap S_F$ for each $F\in\mathcal{G}$.

\smallskip\noindent
(Choice of $W_2$) For any $\mathbf{p}\in E$ for any $E\in\mathcal{E}$, since $C$ is the only face properly containing $E$, $W_1\cup\{\mathbf{p}\}$ is convex. Moreover for every two distinct $E$, $E'\in\mathcal{E}$, $(\mathbf{p}, \mathbf{q})\subseteq D\subseteq W_1$ holds for any $\mathbf{p}\in E$ and $\mathbf{q}\in E'$. Therefore to find $W_2$ satisfying conditions in Lemma \ref{lma:general}(\ref{item:genmcfc}), we may choose a maximal convex subset $C_E$ of $X\cap\mathrm{rint} E$ for each $E\in\mathcal{E}$ and let
$$
W_2=W_1\cup\bigcup\{C_E\mid E\in\mathcal{E}\}.
$$
Since the set of convex sets of dimension $\leq 1$ is $\R$-encodable, the choice of the family $\{C_E\mid E\in\mathcal{E}\}$ can be done using $\mathrm{Unif}_\R$.

\smallskip\noindent
(Choice of $\seq{W_i\mid 3\leq i\leq 2+m}$) Suppose $W_{i+2}$ was chosen for $i<m$. Then first let
$$
Y_i=\{\mathbf{p}\in X\cap E_i\mid\text{$W_{i+2}\cup\{\mathbf{p}\}$ is convex}\}
$$
and if $Y_i$ is nonempty, choose a connected component $C_i$ of $Y_i$. Then we may let
$$
W_{i+3}=
\begin{cases} 
W_{i+2}\cup C_i&\text{if $Y_i\not=\emptyset$,}\\
W_{i+2}&\text{if $Y_i=\emptyset$.}
\end{cases}
$$
In case $m=\omega$ we set $W_\omega=\bigcup_{i<\omega}W_i$.

If $m<\omega$, we can execute the above process without any fragment of $\mathrm{AC}$. If $m=\omega$, since for each $i<\omega$ we have to choose a convex set $C_i$ of dimension $\leq 1$, from a class depending on the preceding choices. So this can be done using $\mathrm{DC}_\R$, which is a consequence of $\mathrm{Unif}_\R$.

\smallskip\noindent
(Choice of $W_{2+m+1}$) Let
$$
P_0=\{\mathbf{p}\in X\cap\bigcup\mathcal{V}\mid\text{$W_{2+m}\cup\{\mathbf{p}\}$ is convex}\}.
$$
According to Lemma \ref{lma:general}(\ref{item:genmcfc}), we may choose $P\subseteq P_0$ which is maximal with respect to the property that $(\mathbf{p}, \mathbf{q})\subseteq W_{2+m}$ for every two distinct $\mathbf{p}$, $\mathbf{q}\in P$ and let $W_{2+m+1}=W_{2+m}\cup P$.

\smallskip\noindent
(Claim 1) For every two distinct $\mathbf{p}$ and $\mathbf{q}$ in $P_0$, either $(\mathbf{p}, \mathbf{q})\subseteq W_{2+m}$ or $(\mathbf{p}, \mathbf{q})\cap W_{2+m}=\emptyset$ holds.

\smallskip\noindent
(Proof of Claim 1) Suppose there exists $\mathbf{r}\in(\mathbf{p}, \mathbf{q})\cap W_{2+m}$. Then since both $W_{2+m}\cup\{\mathbf{p}\}$ and $W_{2+m}\cup\{\mathbf{q}\}$ are convex, $(\mathbf{p}, \mathbf{r})$ and $(\mathbf{q}, \mathbf{r})$ are also contained in $W_{2+m}$, and thus $(\mathbf{p}, \mathbf{q})\subseteq W_{2+m}$ holds.\qed(Claim 1)

Let $\tilde{\mathcal{E}}=\{\{\mathbf{p}, \mathbf{q}\}\in[P_0]^2\mid (\mathbf{p}, \mathbf{q})\cap W_{2+m}=\emptyset\}$. By Claim 1, the task finding $P$ above is equivalent to find a maximal independent subset of the graph $\mathcal{H}=(P_0, \tilde{\mathcal{E}})$. To do it, we will observe that a sufficiently large part of $\mathcal{H}$ is linearly realizable in $\partial C$ and apply Corollary \ref{cor:deg3} to that part. To this end, as an exception handling, first we will choose a sequence of subsets $\seq{Q_i\mid i\leq l}$ of $P_0$ as follows. Let $Q_0=\emptyset$ first. Suppose $Q_i$ ($i<l$) is given. If there exists $\mathbf{p}\in P_0\cap F_i$ such that
$Q_i\cup\{\mathbf{p}\}$ is independent in $\mathcal{H}$, then pick one such $\mathbf{p}$ and set $Q_{i+1}=Q_i\cup\{\mathbf{p}\}$.
Otherwise, just let $Q_{i+1}=Q_i$. In case $l=\omega$, let $Q_\omega=\bigcup_{i<\omega}Q_i$.

Note that this process to choose $\seq{Q_i\mid i\leq l}$ can be done using $\mathrm{DC}_\R$ (in case $l=\omega$; otherwise we don't need it). Clearly $Q_l$ is independent in $\mathcal{H}$.

Now let
$$
P_1=\{\mathbf{p}\in P_0\setminus Q_l\mid\text{$Q_l\cup\{\mathbf{p}\}$ is independent in $\mathcal{H}$}\}.
$$
Then we may find a maximal independent subset $P'$ of $\mathcal{H}\upharpoonright P_1=(P_1, \tilde{\mathcal{E}}\cap[P_1]^2)$, and let $P=Q_l\cup P'$. Now we will show that $\mathcal{H}\upharpoonright P_1$ is linearly realizable in $\partial C$.

\smallskip\noindent
(Claim 2) For every $F\in\mathcal{G}$, if $P_1\cap F\not=\emptyset$ then $Q_l\cap F\not=\emptyset$ holds.

\smallskip\noindent
(Proof of Claim 2) Suppose $F\in\mathcal{G}$ and $P_1\cap F\not=\emptyset$. Let $i<l$ be such that $F=F_i$ and let $\mathbf{p}\in P_1\cap F$. Then since $\mathbf{p}\in P_0$ and $Q_i\cup\{\mathbf{p}\}$ is independent in $\mathcal{H}$, $Q_{i+1}\subseteq Q_l$ must contain a point in $F_i=F$.\qed(Claim 2)

\smallskip\noindent
(Claim 3) For every $\{\mathbf{p}, \mathbf{q}\}\in\tilde{\mathcal{E}}\cap[P_1]^2$, $1\leq\mathrm{dim}(\{\mathbf{p}\}\lor\{\mathbf{q}\})\leq 2$ holds. In particular $(\mathbf{p}, \mathbf{q})\subseteq\operatorname{rbd} C=\partial C$ and does not intersect with $P_1$.

\smallskip\noindent
(Proof of Claim 3) Suppose $\{\mathbf{p}, \mathbf{q}\}\in\tilde{\mathcal{E}}\cap[P_1]^2$. $\mathrm{dim}(\{\mathbf{p}\}\lor\{\mathbf{q}\})\geq 1$ is clear. Since $(\mathbf{p}, \mathbf{q})\cap W_{2+m}=\emptyset$ we have $(\mathbf{p}, \mathbf{q})\nsubseteq D=\operatorname{rint} C$ and thus it holds that $\mathrm{dim}(\{\mathbf{p}\}\lor\{\mathbf{q}\})\leq 2$. $(\mathbf{p}, \mathbf{q})\subseteq\partial C$ follows from Proposition \ref{prop:basic}(\ref{item:properface}), and $(\mathbf{p}, \mathbf{q})\cap P_1=\emptyset$ since $P_1$ consists only of $0$-faces of $C$.\qed(Claim 3)

\smallskip\noindent
(Claim 4) For every two distinct $\{\mathbf{p}, \mathbf{p}'\}$ and $\{\mathbf{q}, \mathbf{q}'\}\in\tilde{\mathcal{E}}\cap[P_1]^2$, $(\mathbf{p}, \mathbf{p}')\cap(\mathbf{q}, \mathbf{q}')=\emptyset$ holds.

\smallskip\noindent
(Proof of Claim 4) Suppose $\{\mathbf{p}, \mathbf{p}'\}$ and $\{\mathbf{q}, \mathbf{q}'\}$ are distinct members of $\tilde{\mathcal{E}}\cap[P_1]^2$ and $(\mathbf{p}, \mathbf{p}')\cap(\mathbf{q}, \mathbf{q}')\not=\emptyset$. Then by Lemma \ref{lma:face}(\ref{item:pqrint}), $\{\mathbf{p}\}\lor\{\mathbf{p}'\}=\{\mathbf{q}\}\lor\{\mathbf{q}'\}$ holds in the lattice $(\mathcal{F}_C, \subseteq)$. Denote this face as $F$. By Claim 3, it never happens that $(\mathbf{p}, \mathbf{p}')$ and $(\mathbf{q}, \mathbf{q}')$ are on the same line. Therefore $(\mathbf{p}, \mathbf{p}')$ and $(\mathbf{q}, \mathbf{q}')$ intersects at a single point and again by Claim 3, $\operatorname{dim} F=2$ holds. In the plane $\operatorname{aff} F$, $\mathbf{q}$ and $\mathbf{q}'$ lie in opposite sides of the line $\operatorname{aff}((\mathbf{p}, \mathbf{p}'))$. Note that no (relatively) open line segment in $C$ can contain $0$-faces of $C$, and therfore $\operatorname{aff}((\mathbf{q}, \mathbf{q}'))\cap C=[\mathbf{q}, \mathbf{q}']$ holds.

Since $\mathbf{p}\in F\cap P_1\not=\emptyset$, by Claim 2 there exists $\mathbf{r}\in Q_l\cap F$. By definition $\mathbf{r}\notin P_1$, and since $(\mathbf{q}, \mathbf{q}')$ contains neither point of $W_{2+m}$ nor $0$-faces of $C$, $\mathbf{r}\notin(\mathbf{q}, \mathbf{q}')$ as well.Therefore $\mathbf{r}\notin[\mathbf{q}, \mathbf{q}']$ and thus is not on the line $\operatorname{aff}((\mathbf{q}, \mathbf{q}'))$. Then either $(\mathbf{p}, \mathbf{r})$ or $(\mathbf{p}', \mathbf{r})$ must intersect with $\operatorname{aff}((\mathbf{q}, \mathbf{q}'))$ and thus with $[\mathbf{q}, \mathbf{q}']$. But on the one hand $(\mathbf{p}, \mathbf{r})$, $(\mathbf{p}', \mathbf{r})\subseteq W_{2+m}$ by definition of $P_1$, on the other hand $[\mathbf{q}, \mathbf{q}']$ does not intersect with $W_{2+m}$. This is a contradiction.\qed(Claim 4)

By Claim 3 and 4, the graph $\mathcal{H}\upharpoonright P_1$ is linearly realized in $\partial C$. Therefore by Corollary \ref{cor:deg3}, $\mathcal{H}\upharpoonright P_1$ contains at most countably many vertices of degree $\geq 3$.

Now let $R$ be the set of vertices of degree $\geq 3$ in $\mathcal{H}\upharpoonright P_1$, and $\seq{\mathbf{p}_i\mid i<n}$ ($n\leq\omega$) an enumeration of $R$. Define $\seq{R_i\mid i\leq n}$ as follows: Let $R_0=\emptyset$. Suppose $R_i$ ($i<n$) was defined. Let $R_{i+1}=R_i\cup\{\mathbf{p}_i\}$ if $R_i\cup\{\mathbf{p}_i\}$ is independent in $\mathcal{H}\upharpoonright P_1$. Otherwise let $R_{i+1}=R_i$. In case $n=\omega$ let $R_\omega=\bigcup_{i<\omega}R_i$. Now let
$$
P_2=\{\mathbf{p}\in P_1\setminus R_n\mid \text{$R_n\cup\{\mathbf{p}\}$ is independent in $\mathcal{H}\restrict P_1$}\}.
$$
Then to find a maximal independent subset $P'$ of $\mathcal{H}\upharpoonright P_1$, we may find a maximal independent subset $P''$ of $\mathcal{H}\upharpoonright P_2$ and let $P'=R_n\cup P''$.

Let $\mathcal{K}$ denote the set of connected components of the graph $\mathcal{H}\upharpoonright P_2$. To find $P''$ as above, it is enough to find a maximal independent subset $P_K$ of $\mathcal{H}\upharpoonright K$ for each $K\in\mathcal{K}$, and let $P''=\bigcup_{K\in\mathcal{K}}P_K$. We will see that we can do this without using any fragment of $\mathrm{AC}$. Note that in the graph $\mathcal{H}\upharpoonright P_2$, all vertices are of degree $\leq 2$, and thus each $K\in\mathcal{K}$ satisfies exactly one of the following:
\begin{enumerate}[(a)]
\item $K$ is finite.
\item $\mathcal{H}\upharpoonright K$ is isomorphic to $(\omega, \mathcal{N}_\omega)$, where $\mathcal{N}_\omega=\{\{n, n+1\}\mid n\in\omega\}$.
\item $\mathcal{H}\upharpoonright K$ is isomorphic to $(\mathbb{Z}, \mathcal{N}_\mathbb{Z})$, where $\mathcal{N}_\mathbb{Z}=\{\{n, n+1\}\mid n\in\mathbb{Z}\}$.
\end{enumerate}

\smallskip\noindent
(Case 1) $K$ is finite.

Let $\{\mathbf{q^K}_i\mid i<|K|\}$ be the enumeration of $K$ ordered by $<_3$. Then define $\seq{S^K_i\mid i\leq|K|}$ as follows. Let $S^K_0=\emptyset$. Suppose $S^K_i$ ($i<|K|$) was defined. Then let $S^K_{i+1}=S^K_i\cup\{q^K_i\}$ if $S^K_i\cup\{q^K_i\}$ is independent in $\mathcal{H}\upharpoonright K$. Otherwise let $S^K_{i+1}=S^K_i$. Then let $P_K=S^K_{|K|}$, which is a maximal independent subset of $\mathcal{H}\upharpoonright K$.

\smallskip\noindent
(Case 2) $\mathcal{H}\upharpoonright K$ is isomorphic to $(\omega, \mathcal{N}_\omega)$.

Let $f:\omega\to K$ be the unique isomorphism between $(\omega, \mathcal{N}_\omega)$ and $\mathcal{H}\upharpoonright K$. Then let $P_K=\{f(2n)\mid n<\omega\}$. Clearly it is a maximal independent subset of $\mathcal{H}\upharpoonright K$.

\smallskip\noindent
(Case 3) $\mathcal{H}\upharpoonright K$ is isomorphic to $(\mathbb{Z}, \mathcal{N}_\mathbb{Z})$.

Fix an enumeration $\seq{r_j\mid j<\omega}$ of the rational numbers.
Let $i_0$ be the least $i<3$ such that $|\{\pi_i(\mathbf{p})\mid\mathbf{p}\in K\}|\geq 2$, where $\pi_i(\mathbf{p})$ denotes the $i$-th coordinate of $\mathbf{p}$. Let $j_0$ be the least $j<\omega$ such that
$$
A^K_j=\{\mathbf{p}\in K\mid \pi_{i_0}(\mathbf{p})\leq r_j\}\ \text{and}\ B^K_j=\{\mathbf{p}\in K\mid \pi_{i_0}(\mathbf{p})>r_j\}
$$
are both nonempty. Let $\mathcal{K}_{A^K_{j_0}}$ and $\mathcal{K}_{B^K_{j_0}}$ be the set of connected components of the graphs $\mathcal{H}\upharpoonright A^K_{j_0}$ and $\mathcal{H}\upharpoonright B^K_{j_0}$ respectively. Note that each $M\in\mathcal{K}_{A^K_{j_0}}$ satisfies (a) or (b) above, and thus we can define a maximal independent subset $I_M$ of $\mathcal{H}\upharpoonright M$ using the above procedure. Let $P_{A^K_{j_0}}=\bigcup_{M\in\mathcal{K}_{A^K_{j_0}}}I_M$.

Now let $N$ be any connected component of $\mathcal{H}\upharpoonright B^K_{j_0}$. Let
$$
N'=\{\mathbf{p}\in N\mid\text{$\{\mathbf{p}, \mathbf{q}\}\in\tilde{\mathcal{E}}$ for some $\mathbf{q}\in P_{A^K_{j_0}}$}\}.
$$
Note that $N\setminus N'$ satisfies (a) or (b) unless $N\setminus N'=\emptyset$. Then using the above procedure, define a maximal independent subset $J_N$ of $\mathcal{H}\upharpoonright (N\setminus N')$. If $N\setminus N'=\emptyset$, just let $J_N=\emptyset$. Now let $P_K=P_{A^K_{j_0}}\cup\bigcup_{N\in\mathcal{K}_{B^K_{j_0}}}J_N$. It is easy to see that $P_K$ is a maximal independent subset of $\mathcal{H}\upharpoonright K$.

This completes our proof of Lemma \ref{lma:uniftothree}.\qed

\begin{cor}
$\mathrm{MCV(3)}$ is equivalent to $\mathrm{Unif}_\R$.\qed
\end{cor}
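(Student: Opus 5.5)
The corollary should follow by combining the two directions already established, in the same way as Corollary \ref{cor:ccand2} was obtained for $\R^2$. For one direction, Theorem \ref{thm:2and3}(\ref{item:3unif}) gives directly that $\mathrm{MCV}(3)$ implies $\mathrm{Unif}_\R$. It uses Lemma \ref{lma:generalchoice} with the open cylinder $D_3$ and the sets $B_r$ placed on distinct generatrices of its boundary.

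For the converse, I would invoke Theorem \ref{lma:uniftothree}, which shows that $\mathrm{Unif}_\R$ implies $\mathrm{MCV}(3)$. The only point to check is that the weaker principles used inside that proof really follow from $\mathrm{Unif}_\R$.

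\begin{itemize}
\item $\mathrm{DC}_\R$ is used for the recursive choices of the $C_i$ and of the $Q_i$. It follows from $\mathrm{Unif}_\R$ by Proposition \ref{prop:easy}(\ref{item:wosccqeq}).
\item $\mathrm{CC}_\R$ is used for the choice of the $S_F$, for the countability of $\mathcal{E}_2$, and in Corollary \ref{cor:deg3}. It follows from $\mathrm{DC}_\R$ by Proposition \ref{prop:easy}(\ref{item:dccc}), since $|\R\times 2|=|\R|$.
\end{itemize}

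There is also a reduction inside that proof: when $X$ has no $3$-dimensional convex subset, the problem passes to a plane. Handling this needs $\mathrm{MCV}(2)$, which is available from $\mathrm{CC}_\R$ by Theorem \ref{thm:cctotwo}. One also needs the case where every convex subset has dimension at most $1$; this is handled as in the proof of Theorem \ref{thm:cctotwo} by taking a connected component on a line.

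The only mildly delicate step is this bookkeeping of which fragments of choice are used. Once it is confirmed that each of them is a consequence of $\mathrm{Unif}_\R$, the two implications together give the equivalence.
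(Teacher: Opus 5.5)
Your proposal is correct and follows the paper exactly: the corollary is the conjunction of Theorem~\ref{thm:2and3}(\ref{item:3unif}) and Theorem~\ref{lma:uniftothree}, and your check that $\mathrm{DC}_\R$ and $\mathrm{CC}_\R$ follow from $\mathrm{Unif}_\R$ via Proposition~\ref{prop:easy} is the step the proof of Theorem~\ref{lma:uniftothree} uses without stating it. One side remark is imprecise: in the lower-dimensional reduction, a bare application of $\mathrm{MCV}(2)$ to the intersection of $X$ with a plane could return a convex set of dimension $<2$ that is not maximal in $X$, so you should take instead the $2$-dimensional maximal convex set built in the proof of Theorem~\ref{thm:cctotwo}, namely the one containing the relatively open set $D$.
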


%% file: convex_higher.tex
In this section we discuss $\mathrm{MCV}(V)$ for some $V$'s of infinite dimensions.

First let us observe that $\mathrm{MCV}(2^\omega)$ and $\mathrm{MCV}(2^{2^\omega})$ can be respectively understood as statements for more familiar spaces.

\begin{prop}\label{prop:familiar}
\begin{enumerate}[(1)]
\item\label{item:fam2omega} $\mathrm{MCV}(2^\omega)$ is equivalent to $\mathrm{MCV}(\R^\omega)$.
\item\label{item:fam22omega} $\mathrm{MCV}(2^{2^\omega})$ is equivalent to $\mathrm{MCV}(\R^\R)$.
\end{enumerate}
\end{prop}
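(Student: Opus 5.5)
The plan is to obtain both equivalences from Proposition \ref{prop:vw}. For each pair of spaces we need a surjective linear map from a space with basis of the relevant cardinality onto the familiar space, and an injective linear map in the other direction. Throughout we use that $\R_I\cong\R_J$ and $\R^I\cong\R^J$ as $\R$-vector spaces whenever $|I|=|J|$, which holds in $\mathrm{ZF}$ via the induced coordinate permutation.

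\emph{The surjections.} For any $\R$-vector space $V$, the map $\R_V\to V$ sending $\mathbf{b}_{\mathbf{x}}\mapsto\mathbf{x}$ is linear and surjective. We have $|\R^\omega|=2^\omega$ in $\mathrm{ZF}$, and $|\R^\R|=|\mathcal{P}(\R\times\omega)|=2^{2^\omega}$. So Proposition \ref{prop:vw}(\ref{item:surj}) gives $\mathrm{MCV}(2^\omega)\Rightarrow\mathrm{MCV}(\R^\omega)$ and $\mathrm{MCV}(2^{2^\omega})\Rightarrow\mathrm{MCV}(\R^\R)$.

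\emph{The injections.} For (\ref{item:fam2omega}), send $\mathbf{b}_t$ for $t\in(0,\infty)$ to the sequence $\seq{t^n\mid n<\omega}\in\R^\omega$. Suppose a finite combination $\sum_i\lambda_i(t_i^n)_n$ with distinct $t_i$ vanishes. Reading off the coordinates $n<k$, where $k$ is the number of terms, gives a Vandermonde system with distinct nodes, so every $\lambda_i=0$. Hence the induced map $\R_{(0,\infty)}\to\R^\omega$ is injective and linear, and Proposition \ref{prop:vw}(\ref{item:inj}) gives $\mathrm{MCV}(\R^\omega)\Rightarrow\mathrm{MCV}(2^\omega)$.

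For (\ref{item:fam22omega}) we need a linearly independent family in $\R^\R$ indexed by $\mathcal{P}(\R)$, and this is the step I expect to need the most care. Let $T=[\R]^{<\omega}\times\omega$. This set has cardinality $2^\omega$ in $\mathrm{ZF}$, so $\R^T\cong\R^\R$. Fix an injection $e:[\R]^{<\omega}\to(0,\infty)$, which is definable without choice. For $A\subseteq\R$ define $f_A\in\R^T$ by
$$
f_A(s,n)=e(A\cap s)^n .
$$
Suppose $\sum_{i<k}\lambda_if_{A_i}=0$ with the $A_i$ distinct. For each pair $i\neq j$ pick a point of the symmetric difference $A_i\triangle A_j$; this is only a finite number of choices. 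Let $s$ be the finite set of these points. Then the sets $A_i\cap s$ are pairwise distinct, so the nodes $e(A_i\cap s)$ are distinct. The Vandermonde argument on the coordinates $(s,n)$, $n<k$, then forces all $\lambda_i=0$. Thus $\mathbf{b}_A\mapsto f_A$ gives an injective linear map $\R_{\mathcal{P}(\R)}\to\R^T\cong\R^\R$. Proposition \ref{prop:vw}(\ref{item:inj}) yields $\mathrm{MCV}(\R^\R)\Rightarrow\mathrm{MCV}(2^{2^\omega})$, which completes both equivalences.
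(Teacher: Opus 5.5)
Your proof is correct. Its skeleton matches the paper's: you take the surjection $\R_V\to V$, use $|\R^\omega|=2^\omega$ and $|\R^\R|=2^{2^\omega}$, add an injection from a space whose basis has the right cardinality, and apply Proposition \ref{prop:vw}. The difference lies in how you produce the large linearly independent families.

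The paper cites the $\mathrm{ZF}$ theorem of Fichtenholz--Kantorovich and Hausdorff: there are independent families of subsets of $\omega$ of size $2^\omega$, and of subsets of $\R$ of size $2^{2^\omega}$. It then takes characteristic functions. Linear independence follows by evaluating at a point that lies in exactly one of finitely many members.

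You instead build the families by hand from Vandermonde determinants. For $\R^\omega$ you use the moment curve $t\mapsto\seq{t^n\mid n<\omega}$. For $\R^\R$ you use $f_A(s,n)=e(A\cap s)^n$ on $[\R]^{<\omega}\times\omega$. Your second construction is essentially Hausdorff's construction in algebraic form. Hausdorff indexes by pairs $(s,\mathcal{A})$ with $s$ finite and $\mathcal{A}\subseteq\mathcal{P}(s)$, and uses the condition $A\cap s\in\mathcal{A}$; you use the exponent $n$ where he uses $\mathcal{A}$. Both rest on the same observation: finitely many distinct sets are separated by some finite $s$, found with only finitely many choices.

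Your version is self-contained and makes the absence of choice transparent. The paper's version is shorter given the citation. The paper's subsequent remark about $l^\infty$, $\mathcal{H}$ and similar spaces is left at ``similar arguments''. Your moment-curve vectors with $t\in(0,1)$ already cover both $l^\infty$ and $\mathcal{H}$. Characteristic functions of infinite sets do not lie in $\mathcal{H}$, so they would need reweighting there.
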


\proof (\ref{item:fam2omega}) Let $V$ be an $\R$-vector space of dimension $2^\omega$. Since $|\R^\omega|=2^\omega$, there is a surjective $\R$-linear map from $V$ to $\R^\omega$. On the other hand, $\R^\omega$ has a linearly independent subset of size $2^\omega$: Let $\mathcal{I}$ be an independent family of subsets of $\omega$ of size $2^\omega$ (Fichtenholz and Kantorovich \cite{fichtenholz35:_independent} showed that such $\mathcal{I}$ exists without using $\mathrm{AC}$. An alternative proof was given by Hausdorff \cite{hausdorff36:_uberzwei}. See also Geschke \cite{geschke12:_adandif}). For each $x\in\mathcal{I}$ let $\mathbf{p}_x\in\R^\omega$ be the characteristic function of $x\subset\omega$. Then it is easy to see $\{\mathbf{p}_x\mid x\in\mathcal{I}\}$ is linearly independent. This shows that there is an injective $\R$-linear map from $V$ to $\R^\omega$. Thus our conclusion follows from Proposition \ref{prop:vw}.

(\ref{item:fam22omega}) It is proved in a similar way as above. On the one hand we have $|\R^\R|=2^{2^\omega}$, and on the other hand we have a linearly independent subset of $\R^\R$ of size $2^{2^\omega}$, since it is proved that there exists an independent family of subsets of $\R$ of size $2^{2^\omega}$ without using $\mathrm{AC}$ (see \cite{fichtenholz35:_independent}, \cite{hausdorff36:_uberzwei} or \cite{geschke12:_adandif}).\qed

Note that by similar arguments one can show that $\mathrm{MCV}(2^\omega)$ is equivalent to $\mathrm{MCV}(V)$ for many $V$'s which are shown to have dimension $2^\omega$ under $\mathrm{AC}$, like the space $l^\infty$ of bounded sequences of reals, the space $C(\R, \R)$ of continuous functions from $\R$ to $\R$, the separable infinite dimensional Hilbert space $\mathcal{H}$, and so on. Though without $\mathrm{AC}$ these spaces are not necessarily shown to be mutually isomorphic, one can show all of them have size $2^\omega$ and have a linear independent subset of size $2^\omega$ without $\mathrm{AC}$.

Now we compare $\mathrm{MCV}(V)$'s with combinatorial fragments of $\mathrm{AC}$. 

\begin{lma}\normalfont\label{lma:mcsc}
\begin{enumerate}[(1)]
\item\label{item:mcvsc} For any cardinal $\kappa$, $\mathrm{MCV}(\kappa)$ implies $\mathrm{SC}_\kappa$.
\item\label{item:scmcv} For any $\R$-vector space $V$, $\mathrm{SC}_{V}$ implies $\mathrm{MCV}(V)$.
\end{enumerate}
\end{lma}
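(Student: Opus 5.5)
For (\ref{item:mcvsc}) I will use Lemma \ref{lma:generalchoice}, in the same way as in the proof of Theorem \ref{thm:mcvac}. Let $V=\R_\kappa$ (a space with basis $\{\mathbf{b}_i\mid i\in\kappa\}$) and let $\mathcal{C}\subseteq[\kappa]^{<\omega}$ with $\emptyset\in\mathcal{C}$. Without loss of generality $\mathcal{C}$ is downward closed: replace $\mathcal{C}$ by $\{s\mid [s]^{<\omega}\subseteq\mathcal{C}\}$, which has the same maximal $P$'s. For $\mathbf{p}\in V$ let $\operatorname{supp}\mathbf{p}=\{i\mid\mathbf{p}(i)\not=0\}$. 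Define
$$
X=\{\mathbf{p}\in V\mid \mathbf{p}\geq 0,\ \textstyle\sum_i\mathbf{p}(i)=1,\ \operatorname{supp}\mathbf{p}\in\mathcal{C}\}.
$$
A set $P\subseteq\kappa$ satisfies $[P]^{<\omega}\subseteq\mathcal{C}$ iff the simplex $\Delta_P=\operatorname{conv}\{\mathbf{b}_i\mid i\in P\}$ is contained in $X$. Moreover $\Delta_P$ is convex, since it is the union of the directed family $\Delta_s$, $s\in[P]^{<\omega}$.

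Given a maximal convex subset $W$ of $X$, let $P=\{i\mid\mathbf{b}_i\in W\}$, or more robustly $P=\bigcup_{\mathbf{p}\in W}\operatorname{supp}\mathbf{p}$. The key step is to show $W=\Delta_P$ and that $P$ is then maximal.

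\begin{itemize}
\item For each $\mathbf{p}\in W$, let $s=\operatorname{supp}\mathbf{p}\in\mathcal{C}$. Downward closure gives $\Delta_s\subseteq X$.
\item For $\mathbf{p},\mathbf{q}\in W$, the midpoint has support $\operatorname{supp}\mathbf{p}\cup\operatorname{supp}\mathbf{q}$, which therefore lies in $\mathcal{C}$. Iterating over finitely many points, every finite $t\subseteq P$ is covered by the support of a convex combination of points of $W$, hence $t\in\mathcal{C}$. So $[P]^{<\omega}\subseteq\mathcal{C}$.
\item Consequently $\Delta_P\subseteq X$ is convex and contains $W$, so $W=\Delta_P$ by maximality.
\item If $P'\supsetneq P$ with $[P']^{<\omega}\subseteq\mathcal{C}$, then $\Delta_{P'}\supsetneq W$ is a convex subset of $X$, contradicting maximality. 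So $P$ is maximal.
\end{itemize}

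This argument never needs a basis indexed by anything other than $\kappa$, so it works even for non-well-orderable $\kappa$.

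For (\ref{item:scmcv}), let $X\subseteq V$ and define
$$
\mathcal{C}=\{s\in[V]^{<\omega}\mid \operatorname{conv}s\subseteq X\}.
$$
Then $\emptyset\in\mathcal{C}$. Apply $\mathrm{SC}_V$ to get a maximal $P\subseteq V$ with $[P]^{<\omega}\subseteq\mathcal{C}$. The convex hull of a set is the union of the hulls of its finite subsets, so $\operatorname{conv}P\subseteq X$. Since $P\subseteq\operatorname{conv}P$ and every finite subset of $\operatorname{conv}P$ has hull inside $\operatorname{conv}P\subseteq X$, maximality forces $P=\operatorname{conv}P$. Hence $P$ is convex. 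If $P\subseteq C'\subseteq X$ with $C'$ convex, then $[C']^{<\omega}\subseteq\mathcal{C}$, so $C'=P$. Thus $P$ is a maximal convex subset of $X$.

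The main obstacle is the bookkeeping in (\ref{item:mcvsc}). One has to check that passing from $\mathcal{C}$ to its downward-closed core preserves the maximal sets. One also has to check that maximality of $W$ really forces $W=\Delta_P$ rather than some convex set with larger support. The support-of-midpoint argument handles the latter, and the former is immediate from the definition of $\mathrm{SC}$, which only involves sets $P$ with all finite subsets in $\mathcal{C}$.
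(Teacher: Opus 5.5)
Your proof is correct and follows essentially the same route as the paper. Your $X$ is exactly the paper's $X_{\tilde C}$, the union of simplices over the downward-closed core $\tilde C=\{S\in C\mid\mathcal P(S)\subseteq C\}$, and your $P$ is the paper's $P_W$; the averaging argument, the identification $W=\operatorname{conv}\{\mathbf b_r\mid r\in P\}$, and part (2) via $\{S\mid\operatorname{conv}S\subseteq X\}$ also match the paper, with your use of $\operatorname{conv}P$ in (2) only compressing its pointwise argument (and your opening reference to Lemma \ref{lma:generalchoice} is unnecessary, since the argument never uses it).
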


\proof (\ref{item:mcvsc}) Let $\kappa$ be any cardinal and let $V=\R_\mathcal{B}$ be an $\R$-vector space such that  $|\mathcal{B}|=\kappa$. Assume $\mathrm{MCV}(\kappa)$ that is equivalent to $\mathrm{MCV}(V)$. Let $C\subseteq[\mathcal{B}]^{<\omega}$ be such that $\emptyset\in C$.
First let
$$
\tilde{C}=\{S\in C\mid\mathcal{P}(S)\subseteq C\}.
$$
Note that $\tilde{C}$ is closed under subset, and that for every $P\subseteq\mathcal{B}$, $[P]^{<\omega}\subseteq C$ holds iff $[P]^{<\omega}\subseteq\tilde{C}$ holds. In particular $\emptyset\in\tilde{C}$ holds. Now let
$$
X_{\tilde{C}}=\bigcup\{\operatorname{conv}_V\{\mathbf{b}_r\mid r\in S\}\mid S\in \tilde{C}\}.
$$
Apply $\mathrm{MCV}(V)$ to obtain a maximal convex subset $W$ of $X_{\tilde{C}}$. Now set
$$
P_W=\{r\in\mathcal{B}\mid\exists\mathbf{p}\in W[\mathbf{p}(r)>0]\}.
$$
Let $S=\{s_0, \ldots, s_k\}\in[P_W]^{<\omega}$ ($k<\omega$) be arbitrary. For each $i\leq k$, we may pick $\mathbf{p}_i\in W$ such that $\mathbf{p}_i(s_i)>0$. Then let $\mathbf{p}=\frac{1}{k+1}\sum^k_{i=0}\mathbf{p}_i$.
Since $W$ is convex, $\mathbf{p}\in W\subseteq X_{\tilde{C}}$ holds. Since all functions in $X_{\tilde{C}}$ are nonnegative, $\mathbf{p}(s_i)>0$ holds for every $i\leq k$. Pick $S'\in\tilde{C}$ such that $\mathbf{p}\in\operatorname{conv}_V\{\mathbf{b}_r\mid r\in S'\}$.
Then $S\subseteq\{r\in\mathcal{B}\mid\mathbf{p}(r)>0\}\subseteq S'$ and thus $S\in\tilde{C}$. This shows that $[P_W]^{<\omega}\subseteq\tilde{C}$. Note that
\begin{eqnarray*}
W&\subseteq&\bigcup\{\operatorname{conv}_V\{\mathbf{b}_r\mid r\in S\}\mid S\in[P_W]^{<\omega}\}\\
&=&\operatorname{conv}_V\{\mathbf{b}_r\mid r\in P_W\}\\
&\subseteq&\operatorname{conv}_V\{\mathbf{b}_r\mid r\in P\}\\
&=&\bigcup\{\operatorname{conv}_V\{\mathbf{b}_r\mid r\in S\}\mid S\in[P]^{<\omega}\}\\
&\subseteq&X_{\tilde{C}}
\end{eqnarray*}
holds. Since $\operatorname{conv}_V\{\mathbf{b}_r\mid r\in P\}$ is a convex subset of $X_{\tilde{C}}$ containing $W$, by maximality of $W$ it holds that
$$
W=\operatorname{conv}_V\{\mathbf{b}_r\mid r\in P_W\}=\operatorname{conv}_V\{\mathbf{b}_r\mid r\in P\}
$$
and therefore $P=P_W$ holds. Thus $P_W$ is a maximal subset of $\mathcal{B}$ such that $[P_W]^{<\omega}\subseteq\tilde{C}$. This shows $\mathrm{SC}_\mathcal{B}$ that is equivalent to $\mathrm{SC}_\kappa$.

(\ref{item:scmcv}) Let $V$ be any $\R$-vector space and assume $\mathrm{SC}_V$. Let $X\subseteq V$ be arbitrary. Let
$$
C=\{S\in[X]^{<\omega}\mid\operatorname{conv}_VS\subseteq X\}.
$$
Note that $C$ is a subset of $[X]^{<\omega}$ and $\emptyset\in C$ holds. Apply $\mathrm{SC}_V$ to obtain a maximal $P\subseteq X$ such that $[P]^{<\omega}\subseteq C$. Let $\mathbf{p}$, $\mathbf{q}\in P$ and $\mathbf{r}\in(\mathbf{p}, \mathbf{q})$. Since $\{\mathbf{p}, \mathbf{q}\}\in C$, $\mathbf{r}\in\operatorname{conv}_V\{\mathbf{p}, \mathbf{q}\}\subseteq X$ holds. Moreover, For any $S\in[P]^{<\omega}$ we have
$$
\operatorname{conv}(S\cup\{\mathbf{r}\})\subseteq\operatorname{conv}(S\cup\{\mathbf{p}, \mathbf{q}\})\subseteq X
$$
and thus $S\cup\{\mathbf{r}\}\in C$. Therefore $[P\cup\{\mathbf{r}\}]^{<\omega}\subseteq C$ and by maximality of $P$ we have $\mathbf{r}\in P$. This shows that $P$ is a convex subset of $X$. Now suppose $P'\subseteq X$ is convex and contains $P$. Then for every $S\in[P']^{<\omega}$, $\operatorname{conv}_VS\subseteq P'\subseteq X$ holds and thus $S\in C$. Therefore we have $[P']^{<\omega}\subseteq C$, and thus by maximality of $P$ we have $P'=P$. This shows that $P$ is a maximal convex subset of $X$. Therefore we have $\mathrm{MCV}(V)$.\qed

\begin{thm}
\begin{enumerate}[(1)]
\item\label{item:cor2omega} $\mathrm{MCV}(2^\omega)$ is equivalent to $\mathrm{SC}_\R$.
\item\label{item:cor22omega} $\mathrm{MCV}(2^{2^\omega})$ is equivalent to $\mathrm{SC}_{\mathcal{P}(\R)}$.
\end{enumerate}
\end{thm}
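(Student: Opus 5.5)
The plan is to read both equivalences off Lemma \ref{lma:mcsc}, once we know that the relevant vector spaces have the right cardinality. The argument is the same for (\ref{item:cor2omega}) and (\ref{item:cor22omega}), with $\kappa=2^\omega$ in the first case and $\kappa=2^{2^\omega}$ in the second.

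For the direction $\mathrm{MCV}(\kappa)\Rightarrow\mathrm{SC}_\kappa$, we apply Lemma \ref{lma:mcsc}(\ref{item:mcvsc}) directly. This gives $\mathrm{SC}_{2^\omega}$, which is $\mathrm{SC}_\R$ since $|\R|=2^\omega$ and $\mathrm{SC}_X$ depends only on $|X|$ by Proposition \ref{prop:easy}(\ref{item:yx}). Likewise it gives $\mathrm{SC}_{2^{2^\omega}}$, which is $\mathrm{SC}_{\mathcal{P}(\R)}$ because $|\mathcal{P}(\R)|=2^{2^\omega}$.

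For the converse we first pass to a concrete space using Proposition \ref{prop:familiar}. It says $\mathrm{MCV}(2^\omega)$ is equivalent to $\mathrm{MCV}(\R^\omega)$, and $\mathrm{MCV}(2^{2^\omega})$ is equivalent to $\mathrm{MCV}(\R^\R)$. So it suffices to derive $\mathrm{MCV}(\R^\omega)$ from $\mathrm{SC}_\R$, and $\mathrm{MCV}(\R^\R)$ from $\mathrm{SC}_{\mathcal{P}(\R)}$.

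\begin{itemize}
\item We have $|\R^\omega|=2^\omega=|\R|$ in ZF. Hence $\mathrm{SC}_\R$ yields $\mathrm{SC}_{\R^\omega}$ by Proposition \ref{prop:easy}(\ref{item:yx}), and Lemma \ref{lma:mcsc}(\ref{item:scmcv}) then gives $\mathrm{MCV}(\R^\omega)$.
\item We have $|\R^\R|=(2^\omega)^{2^\omega}=2^{2^\omega}=|\mathcal{P}(\R)|$, again provably in ZF via $\omega\times\R\approx\R$. So $\mathrm{SC}_{\mathcal{P}(\R)}$ yields $\mathrm{SC}_{\R^\R}$, and Lemma \ref{lma:mcsc}(\ref{item:scmcv}) gives $\mathrm{MCV}(\R^\R)$.
\end{itemize}

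The main point needing care is that all cardinal identities used are ZF-provable, since we cannot appeal to well-orderability. They are: $|\R^\omega|=|\R|$, $|\R^\R|=|\mathcal{P}(\R)|$, $|\R|=2^\omega$, and the identification of a space of dimension $2^\omega$ (respectively $2^{2^\omega}$) with $\R^\omega$ (respectively $\R^\R$) for $\mathrm{MCV}$-purposes. The first three are standard ZF facts via explicit bijections. The last is exactly Proposition \ref{prop:familiar}, which rests on the choice-free existence of independent families.

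Finally, we should check that Lemma \ref{lma:mcsc}(\ref{item:scmcv}) is applied with $V$ equal to the concrete space itself rather than an abstract space of dimension $\kappa$. That is why we route the converse through $\R^\omega$ and $\R^\R$, whose underlying sets have the right cardinality, instead of through $\R_{\mathcal{B}}$, whose cardinality as a set might not be provably $\kappa$ without choice.
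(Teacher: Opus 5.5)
Your proof is correct and essentially matches the paper's. The paper also reads both equivalences off Lemma~\ref{lma:mcsc}, and it handles (2) exactly as you do, via Proposition~\ref{prop:familiar}(\ref{item:fam22omega}) and $|\R^\R|=2^{2^\omega}$. The only difference is in the converse of (1): the paper skips the detour through $\R^\omega$, because the space $\R_\R$ of dimension $2^\omega$ is already provably of size $2^\omega$ in $\mathrm{ZF}$ (a finitely supported function is coded by its increasingly enumerated finite support together with its values). So Lemma~\ref{lma:mcsc}(\ref{item:scmcv}) applies to $\R_\R$ directly, and your worry about $|\R_{\mathcal{B}}|$ is moot for $\kappa=2^\omega$.
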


\proof (\ref{item:cor2omega}) is clear by Lemma \ref{lma:mcsc}, because the $\R$-vector space of dimension $2^\omega$ is of size $2^\omega$. As for (\ref{item:cor22omega}), by Proposition \ref{prop:familiar}(\ref{item:fam22omega}) $\mathrm{MCV}(2^{2^\omega})$ is equivalent to $\mathrm{MCV}(\R^\R)$ and thus by $|\R^\R|=2^{2^\omega}$ and Lemma \ref{lma:mcsc} the conclusion follows.\qed

We also have some consequences on the existence of bases for some $\R$-vector spaces.

\begin{prop}\label{prop:scbasis}
For each $\R$-vector space $V$, $\mathrm{SC}_V$ implies that every linearly independent subset of $V$ can be extended to a basis of $V$.
\end{prop}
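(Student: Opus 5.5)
Given a linearly independent set $I\subseteq V$, I will apply $\mathrm{SC}_V$ to the family
$$
C=\{S\in[V]^{<\omega}\mid \text{$S\cup T$ is linearly independent for every finite $T\subseteq I$}\}.
$$
Equivalently, $S\in C$ iff $S\cup I$ is linearly independent; this is because linear independence is a finitary property. Note that $\emptyset\in C$, since $I$ itself is linearly independent. So $\mathrm{SC}_V$ yields a maximal $P\subseteq V$ with $[P]^{<\omega}\subseteq C$.

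The first step is to show that $P\cup I$ is linearly independent. Any finite subset of $P\cup I$ has the form $S\cup T$ with $S\in[P]^{<\omega}\subseteq C$ and $T\in[I]^{<\omega}$, so it is independent by definition of $C$.

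The second step is to show $I\subseteq P$. Adding the points of $I$ to $P$ keeps the property, since for finite $S\subseteq P$ and $T'\subseteq I$ the set $S\cup T'\cup T$ is still a finite subset of $P\cup I$ and hence independent. So $[P\cup I]^{<\omega}\subseteq C$, and maximality of $P$ gives $I\subseteq P$. Consequently $P$ is linearly independent and extends $I$.

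The third step is to show that $P$ spans $V$. Suppose $\mathbf{v}\in V$ is not in the span of $P$. Then $P\cup\{\mathbf{v}\}$ is linearly independent: a nontrivial dependence must involve $\mathbf{v}$ with nonzero coefficient, which would put $\mathbf{v}$ in the span of $P$. Since $I\subseteq P$, every finite subset of $(P\cup\{\mathbf{v}\})\cup I$ is a finite subset of $P\cup\{\mathbf{v}\}$, hence independent. Thus $[P\cup\{\mathbf{v}\}]^{<\omega}\subseteq C$, and maximality gives $\mathbf{v}\in P$, contradicting $\mathbf{v}\notin\operatorname{span}P$. Therefore $P$ is a basis of $V$ extending $I$.

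There is no real obstacle in this argument; the whole point is that linear independence is of finite character. The only care needed is to build $I$ into $C$ so that the maximal set produced by $\mathrm{SC}_V$ automatically contains $I$. An alternative would be to take $C$ to be the family of all finite independent sets, but then one would have to enlarge $P$ by $I$ afterwards, and maximality would be lost.
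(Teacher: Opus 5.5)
Your proof is correct and follows the paper's argument: apply $\mathrm{SC}_V$ to the family of finite sets whose union with the given independent set is linearly independent, and use the finite character of linear independence. The only difference is cosmetic. The paper restricts this family to finite subsets of $V\setminus B_0$, so the maximal $P$ is disjoint from $B_0$ and the basis is $P\cup B_0$, whereas you let $P$ absorb $I$ (your second step), so that $P$ itself is the basis.
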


\proof Assume $\mathrm{SC}_V$ and let $B_0$ be any linearly independent subset of $V$. Let
$$
C=\{S\in[V\setminus B_0]^{<\omega}\mid\text{$S\cup B_0$ is linearly independent}\}.
$$
Then $C$ is a subset of $[V]^{<\omega}$ and $\emptyset\in C$ holds. Now apply $\mathrm{SC}_V$ to obtain a maximal $P\subseteq V$ such that $[P]^{<\omega}\subseteq C$. Then it is easy to see that $P\cup B_0$ is a basis of $V$.\qed

By Lemma \ref{lma:mcsc}(\ref{item:mcvsc}) and Proposition \ref{prop:scbasis} we have
\begin{cor}\label{cor:mcbasis}
For each $\R$-vector space $V$, if $\mathrm{MCV}(|V|)$ holds then every linearly independent subset of $V$ can be extended to a basis.\qed
\end{cor}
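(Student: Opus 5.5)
The corollary follows by chaining two earlier results, with one bookkeeping point about which index set $\mathrm{SC}$ is taken over. Fix an $\R$-vector space $V$ and assume $\mathrm{MCV}(|V|)$. This is $\mathrm{MCV}(W)$ for an $\R$-vector space $W$ with a basis of cardinality $|V|$, for instance $W=\R_{\mathcal{B}}$ with $|\mathcal{B}|=|V|$.

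First I apply Lemma \ref{lma:mcsc}(\ref{item:mcvsc}) with $\kappa=|V|$, which gives $\mathrm{SC}_\kappa$. In the proof of that lemma, $\mathrm{SC}_\kappa$ means $\mathrm{SC}_{\mathcal{B}}$ for a set $\mathcal{B}$ of cardinality $\kappa$. Since the statement $\mathrm{SC}_X$ depends only on $|X|$ (transport along a bijection, or use Proposition \ref{prop:easy}(\ref{item:yx}) in both directions), this is the same as $\mathrm{SC}_V$ for the underlying set of $V$.

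Second I invoke Proposition \ref{prop:scbasis}: $\mathrm{SC}_V$ implies that every linearly independent subset of $V$ extends to a basis of $V$. That is exactly the desired conclusion.

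The only step needing care is the passage from $\mathrm{SC}_\kappa$ to $\mathrm{SC}_V$. I expect no real obstacle there: no choice is used, because a single fixed bijection $\mathcal{B}\to V$ carries families $C\subseteq[\mathcal{B}]^{<\omega}$, and the corresponding maximal sets $P$, across in both directions.
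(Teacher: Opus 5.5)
Your proposal is correct and is the paper's own argument: the corollary comes from chaining Lemma~\ref{lma:mcsc}(\ref{item:mcvsc}), which gives $\mathrm{SC}_{|V|}$, with Proposition~\ref{prop:scbasis}. Your observation that $\mathrm{SC}_X$ depends only on $|X|$, transported along a fixed bijection $\mathcal{B}\to V$, simply spells out the identification of $\mathrm{SC}_{|V|}$ with $\mathrm{SC}_V$ that the paper leaves implicit.
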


By the above corollary and the remark below the proof of Proposition \ref{prop:familiar} we have the following.

\begin{cor}
Suppose $\mathrm{MCV}(V)$ holds for either $V=\R_\R$, $\R^\omega$, $l^\infty$, $C(\R, \R)$ or $\mathcal{H}$. Then these spaces are all isomorphic (as $\R$-vector spaces).\qed
\end{cor}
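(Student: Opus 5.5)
The plan is to combine Corollary \ref{cor:mcbasis} with the remark following Proposition \ref{prop:familiar}. The five spaces $\R_\R$, $\R^\omega$, $l^\infty$, $C(\R,\R)$ and $\mathcal{H}$ all have cardinality $2^\omega$ in $\mathrm{ZF}$, and each contains a linearly independent subset of size $2^\omega$. For $\R_\R$ this is the canonical basis $\{\mathbf{b}_r\mid r\in\R\}$. For the others we take characteristic functions of an independent family $\mathcal{I}$ of subsets of $\omega$ of size $2^\omega$. In $\mathcal{H}=l^2$ we weight these by a fixed square-summable positive sequence. For $C(\R,\R)$ we use piecewise linear interpolations on the integers. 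In each case linear independence is checked by evaluating at finitely many coordinates, exactly as in the proof of Proposition \ref{prop:familiar}.

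First I would show that $\mathrm{MCV}(V)$ for any one of these $V$ yields $\mathrm{MCV}(2^\omega)$. Each $V$ has size $2^\omega$, so a fixed bijection of $\R$ onto a spanning set of $V$ gives a surjective linear map $\R_\R\to V$. Each $V$ also contains an independent set of size $2^\omega$, so there is an injective linear map $\R_\R\to V$. By Proposition \ref{prop:vw}(\ref{item:inj}), $\mathrm{MCV}(V)$ implies $\mathrm{MCV}(\R_\R)=\mathrm{MCV}(2^\omega)$.

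Next, since $|V|=2^\omega$ for every $V$ on the list, $\mathrm{MCV}(|V|)$ holds. Corollary \ref{cor:mcbasis} then says that every linearly independent subset of $V$ extends to a basis. Extending the independent set of size $2^\omega$ found above gives a basis $B_V$ with $2^\omega\le|B_V|\le|V|=2^\omega$. The Cantor--Schröder--Bernstein theorem holds in $\mathrm{ZF}$, so $|B_V|=2^\omega$. Two vector spaces whose bases are in bijection are isomorphic, since the bijection extends linearly. Hence every listed space is isomorphic to $\R_\R$, and therefore all of them are isomorphic to each other.

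The main obstacle is making the ``$|V|=2^\omega$ and independent set of size $2^\omega$'' facts honest in $\mathrm{ZF}$.
\begin{itemize}
\item For $C(\R,\R)$, $|C(\R,\R)|=2^\omega$ follows by restricting each function to $\Q$, which needs no choice.
\item For $l^\infty$ and $\mathcal{H}$, the bound $|V|=2^\omega$ is immediate because both are subsets of $\R^\omega$.
\item For the independence claims, the only nontrivial ingredient is the choice-free independent family of Fichtenholz--Kantorovich, already invoked in the paper.
\end{itemize}
The rest is routine bookkeeping.
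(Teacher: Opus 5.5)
Your proposal is correct and follows the paper's own route. The paper derives this corollary from exactly the two ingredients you use. The remark after Proposition \ref{prop:familiar} (each space has size $2^\omega$ and a choice-free independent subset of size $2^\omega$, so Proposition \ref{prop:vw}(\ref{item:inj}) applies) turns $\mathrm{MCV}(V)$ into $\mathrm{MCV}(2^\omega)=\mathrm{MCV}(|W|)$ for every listed $W$. Corollary \ref{cor:mcbasis} then gives each $W$ a basis of size $2^\omega$, so every listed space is isomorphic to $\R_\R$. The surjective map $\R_\R\to V$ you construct is never used and can be dropped.
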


Now let us mention $\mathrm{MCV}(\omega_1)$. By Proposition \ref{prop:vw}(\ref{item:surj}), $\mathrm{MCV}(2^\omega)$ implies $\mathrm{MCV}(\omega_1)$. The following gives a lower bound for the strength of $\mathrm{MCV}(\omega_1)$.



\begin{thm}\normalfont
$\mathrm{MCV}(\omega_1)$ implies that $\omega_1\leq 2^\omega$.
\end{thm}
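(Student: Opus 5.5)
The plan is to mimic the proof of Theorem~\ref{thm:mcvac} and Theorem~\ref{thm:2and3}(\ref{item:3unif}), using Lemma~\ref{lma:generalchoice} to extract a choice function for an $\omega_1$-indexed family of nonempty sets of reals. Concretely, for each $\alpha<\omega_1$ let $A_\alpha\subseteq\R$ be the set of reals coding a well-ordering of $\omega$ of order type $\alpha$ (for finite $\alpha$, or $\alpha=0$, use any fixed trivial coding). Each $A_\alpha$ is nonempty and definable in $\mathrm{ZF}$, and $A_\alpha\cap A_\beta=\emptyset$ for $\alpha\neq\beta$. So any function $\alpha\mapsto x_\alpha\in A_\alpha$ is an injection $\omega_1\to\R$, which is exactly $\omega_1\leq 2^\omega$. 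It therefore suffices to show that $\mathrm{MCV}(\omega_1)$ yields such a choice function.

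For the space, take index set $\mathcal{B}=\omega_1\times 2$, which has cardinality $\omega_1$, and let $V=\R_{\mathcal{B}}$, writing $\mathbf{b}_\alpha=\mathbf{b}_{\seq{\alpha,0}}$ and $\mathbf{c}_\alpha=\mathbf{b}_{\seq{\alpha,1}}$. For $\mathbf{p}\in V$ let $I(\mathbf{p})=\{\alpha<\omega_1\mid \mathbf{p}(\seq{\alpha,0})\neq0\lor\mathbf{p}(\seq{\alpha,1})\neq0\}$. Put
$$
D=\{\mathbf{p}\in V\mid \mathbf{p}\geq0\land |I(\mathbf{p})|\geq2\}.
$$
Fix a bijection $\varphi:\R\to(0,\pi/2)$ and let
$$
B_\alpha=\{\cos\varphi(x)\,\mathbf{b}_\alpha+\sin\varphi(x)\,\mathbf{c}_\alpha\mid x\in A_\alpha\},\qquad X=D\cup\bigcup_{\alpha<\omega_1}B_\alpha.
$$
The sets $D$ and $B_\alpha$ are nonempty and pairwise disjoint, because $I(\mathbf{p})=\{\alpha\}$ for $\mathbf{p}\in B_\alpha$.

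Next check the three hypotheses of Lemma~\ref{lma:generalchoice}.
\begin{enumerate}[(1)]
\item Take $\mathbf{p}\in X$ and $\mathbf{q}\in D$. Both vectors are nonnegative, so every point of $(\mathbf{p},\mathbf{q})$ is nonnegative, and its support is the union of the two supports. That union contains $I(\mathbf{q})$, which has at least two elements, so $(\mathbf{p},\mathbf{q})\subseteq D$.
\item Take distinct $\mathbf{p},\mathbf{q}\in B_\alpha$. Their midpoint has $I=\{\alpha\}$, so it is not in $D$. Its coordinates $(u,v)$ on $\mathbf{b}_\alpha,\mathbf{c}_\alpha$ satisfy $u^2+v^2<1$ by strict convexity of the unit circle, so it is not in any $B_\beta$ either. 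Hence the midpoint is not in $X$.
\item Take $\mathbf{p}\in B_\alpha$ and $\mathbf{q}\in B_\beta$ with $\alpha\neq\beta$. Every point of $(\mathbf{p},\mathbf{q})$ is nonnegative with support meeting both $\alpha$ and $\beta$, so $(\mathbf{p},\mathbf{q})\subseteq D$.
\end{enumerate}

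Now apply $\mathrm{MCV}(\omega_1)$ to obtain a maximal convex $W\subseteq X$. By Lemma~\ref{lma:generalchoice}, $W\cap B_\alpha$ is a singleton $\{\cos\varphi(x_\alpha)\mathbf{b}_\alpha+\sin\varphi(x_\alpha)\mathbf{c}_\alpha\}$ for each $\alpha$. Since $\varphi$ is injective, $x_\alpha$ is determined uniquely, so $\alpha\mapsto x_\alpha$ is a choice function with no further choice needed.

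The only delicate points are condition (2), handled by placing each $B_\alpha$ on a strictly convex arc, and making sure the $A_\alpha$ are defined uniformly in $\mathrm{ZF}$ and pairwise disjoint. Both look routine, so I do not expect a serious obstacle.
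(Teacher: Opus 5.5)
Your proposal is correct and is essentially the paper's proof: the paper also applies Lemma~\ref{lma:generalchoice} in $\R_{\omega_1}$ to $X=D\cup\bigcup_{\alpha<\omega_1}B_\alpha$, where $D$ consists of the nonnegative vectors whose support has at least two elements and $B_\alpha$ encodes the fiber over $\alpha$ of a fixed surjection $f:(0,1)\setminus\Q\to\omega_1$, so that a maximal convex subset yields an injective right inverse of $f$. The only difference is cosmetic: the paper takes $B_\alpha=\{r\mathbf{b}_\alpha\mid r\in f^{-1}(\alpha)\}$ on a single ray with irrational scalars, so rational points between two of them break convexity, which avoids your doubled index set $\omega_1\times 2$ and the strict-convexity-of-the-circle argument.
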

\proof Assume $\mathrm{MCV}(\omega_1)$. Fix a surjection $f:(0, 1)\setminus\Q\to\omega_1$. Let
$$
D=\{\mathbf{p}\in \R_{\omega_1}\mid \mathbf{p}\geq 0\land|\{\alpha<\omega_1\mid\mathbf{p}(\alpha)>0\}|\geq 2\}
$$
and for each $\alpha<\omega_1$ let $B_\alpha=\{r\mathbf{b}_\alpha\mid r\in f^{-1}(\alpha)\}$. Let
$$
X=D\cup\bigcup_{\alpha<\omega_1}B_\alpha.
$$
Then it is easy to see that
\begin{enumerate}[(1)]
\item For every $\mathbf{p}\in X$ and $\mathbf{q}\in D$, $(\mathbf{p}, \mathbf{q})\subseteq D$ holds.
\item For every two distinct $\mathbf{p}$, $\mathbf{q}\in B_\alpha$ for $\alpha\in\omega_1$, $(\mathbf{p}, \mathbf{q})\nsubseteq X$ holds.
\item For every $\mathbf{p}\in B_\alpha$, $\mathbf{q}\in B_\beta$ for distinct $\alpha$, $\beta\in\omega_1$, $(\mathbf{p}, \mathbf{q})\subseteq D$ holds.
\end{enumerate}
By $\mathrm{MCV}(\omega_1)$, $X$ has a maximal convex subset, and by Lemma \ref{lma:generalchoice} there exists a right inverse $g:\omega_1\to(0, 1)\setminus\Q$ of $f$. Since $g$ is injective we have $\omega_1\leq 2^\omega$ holds.\qed


%% file: convex_summary.tex
The following diagram indicates implications between $\mathrm{MCV}$, $\mathrm{MCV}(V)$ for some $V$'s and other fragments of $\mathrm{AC}$. In the diagram, $(\kappa)$ for a cardinal $\kappa$ denotes $\mathrm{MCV}(\kappa)$. $(\mathrm{WO})$ denotes the statement that $\mathrm{MCV}(\kappa)$ holds for every well-orderable cardinal $\kappa$. For an $\R$-vector space $V$, $\mathrm{BS}_V$ denotes the statement that every linearly independent subset of $V$ can be extended to a basis of $V$. $\mathrm{ULF}_\omega$ and $\mathrm{LNM}$ respectively denote the statement that every filter on $\omega$ can be extended to an ultrafilter and the existence of a Lebesgue non-measurable subset of $\R$.

\begin{figure}[h]
\includegraphics[height=9cm]{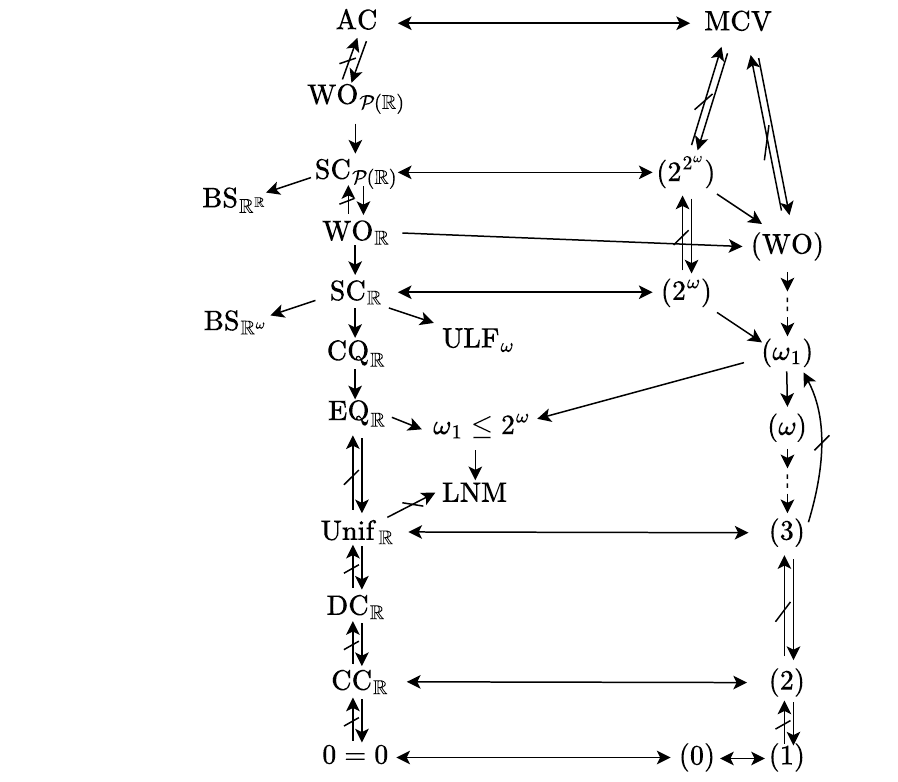}
\end{figure}

Here we list some questions related to the subject of this paper.

\begin{qtn}
How strong is $\mathrm{MCV}(4)$? Is it equivalent to $\mathrm{MCV}(3)$? Or does $\mathrm{MCV}(4)$ (or any $\mathrm{MCV}(n)$ for some $n\leq\omega$) imply the existence of `non-regular' subsets of $\R$ (like Lebesgue non-measurable sets, sets without the property of Baire, or sets without the perfect set property)?
\end{qtn}
\begin{qtn}
Does the statement that $\mathrm{MCV}(\kappa)$ holds for every well-orderable cardinal $\kappa$ imply $\mathrm{WO}_\R$? Is the statement comparable with $\mathrm{MCV}(2^\omega)$?
\end{qtn}
\begin{qtn}
Can any implication between $\mathrm{WO}_\R$, $\mathrm{SC}_\R$, $\mathrm{CQ}_\R$ and $\mathrm{EQ}_\R$ be inverted? How about $\mathrm{WO}_{\mathcal{P}(\R)}$, $\mathrm{SC}_{\mathcal{P}(\R)}$, $\mathrm{CQ}_{\mathcal{P}(\R)}$, $\mathrm{EQ}_{\mathcal{P}(\R)}$ and $\mathrm{Unif}_{\mathcal{P}(\R)}$?
\end{qtn}
\begin{qtn}
Let $\mathrm{MCV}(V, \Gamma)$ denote the restriction of $\mathrm{MCV}(V)$ to the case the given set is in the pointclass $\Gamma$. How strong is it if $V$ is a Polish space and $\Gamma$ is some Borel/Luzin pointclass? What can we say about pointclasses to which maximal convex subsets belong?
\end{qtn}
\begin{qtn}
Does $\mathrm{MCV}(V)$ imply that $V$ has a basis? By Corollary \ref{cor:mcbasis} $\mathrm{MCV}(|V|)$ does, but it is not clear if $\mathrm{MCV}(V)$ does. Is there any implication of the other direction, that is, is $\mathrm{MCV}(V)$ derived from the existence of a basis for some $\R$-vector space? 
\end{qtn}

%% file: convex_acknowledgements.tex
The author would like to thank Asaf Karagila, who read an earlier version of this paper and made many useful comments, calling the author's attention to some papers to reference and to the question how $\mathrm{MCV}$-type axioms relates to the existence of bases for $\R$-vector spaces. We would like to thank Daisuke Ikegami and Toshimichi Usuba for their helpful comments as well.

%% file: locco.bib
@article{andrettanotaro25:_doesdc,
	author = {Andretta, Alessandro and Notaro, Lorenzo},
	journal = {Journal of Symbolic Logic},
	number = {4},
	pages = {1538-1562},
	title = {Does $\mathrm{DC}$ imply $\mathrm{AC}_\omega$, uniformly?},
	volume = {90},
	year = {2025}}

@article{bellfremlin72:_geometric,
	author = {Bell, J. L. and Fremlin, D. H.},
	journal = {Fundamenta Mathematicae},
	number = {2},
	pages = {167-170},
	title = {A geometric form of the axiom of choice},
	volume = {77},
	year = {1972}}

@article{friedman19:_models,
	author = {Friedman, S.-D. and Gitman, V. and Kanovei, V.},
	journal = {J. Math. Log.},
	number = {1},
	pages = {1850013},
	title = {A model of second-order arithmetic satisfying {A}{C} but not {D}{C}},
	volume = {19},
	year = {2019}}

@article{cohen6364:_indep,
	author = {Cohen, Paul},
	journal = {Proc. Natl. Acad. Sci.},
	title = {The independence of the {C}ontinuum {H}ypothesis, {I} \& {I}{I}},
	volume = {50: 1143-1148, 1963; 51: 105-110, 1964}}

@article{jensen67:_consistency,
	author = {Jensen, R. B.},
	journal = {Notices Amer. Math. Soc.},
	pages = {137},
	title = {Consistency results for $\mathrm{ZF}$},
	volume = {14},
	year = {1967}}

@article{solovay:_lebesgue,
	author = {Solovay, Robert M.},
	journal = {Annals of Math., 2nd Ser.},
	number = {1},
	pages = {1-56},
	title = {A model of set-theory in which every set of reals is {L}ebesgue measurable},
	volume = {92},
	year = {1970}}

@book{moore:_ac,
	author = {Moore, Gregory H.},
	publisher = {Springer},
	series = {Studies in the History of Mathematics and Physical Sciences},
	title = {Zermelo's {A}xiom of {C}hoice: {I}ts {O}rigins, {D}evelopment, and {I}nfluence},
	volume = {8},
	year = {1982 (Reprinted by Dover Publications, 2013)}}

@incollection{solovay:_independence,
	author = {Solovay, Robert M.},
	booktitle = {Cabal Seminar 76-77: Proceedings, Caltech-UCLA Logic Seminar 1976-1977},
	editor = {Kechris, Alexander S. and Moschovakis, Yiannis N.},
	publisher = {Springer-Verlag},
	series = {Lecture notes in {M}athematics},
	title = {The independence of {DC} from {AD}},
	volume = {689},
	year = {1978}}

@book{jech:_choice,
	author = {Jech, Thomas J.},
	publisher = {North-Holland Pub. Co., American Elsevier Pub. Co.},
	series = {Study in Logic and the Foundations of Mathematics},
	title = {Axiom of Choice},
	volume = {75},
	year = {1973 (Reprinted by Dover Publications, 2008)}}

@book{herrlich:_choice,
	author = {Herrlich, Horst},
	publisher = {Springer},
	series = {Lecture notes in {M}athematics},
	title = {Axiom of Choice},
	volume = {1876},
	year = {2009}}

@book{howardrubin:_consequence,
	author = {Howard, Paul and Rubin, Jean E.},
	publisher = {American Mathematical Society},
	series = {Mathematical {S}urveys and {M}onographs},
	title = {Consequences of the {A}xiom of {C}hoice},
	volume = {59},
	year = {1998}}

@article{hausdorff36:_uberzwei,
	author = {Hausdorff, F.},
	journal = {Studia Math.},
	pages = {18-19},
	title = {\"{U}ber zwei {S}\"atze von {G}.~{F}ichtenholz and {L}.~{K}antrovich},
	volume = {6},
	year = {1936}}

@article{geschke12:_adandif,
	author = {Geschke, Stefan},
	journal = {RIMS Kokyuroku},
	pages = {1-9},
	title = {Almost disjoint and independent families},
	volume = {1790},
	year = {2012}}

@article{fichtenholz35:_independent,
	author = {Fichtenholz, G. M. and Kantorovich, L. V.},
	journal = {Studia Math.},
	pages = {69-98},
	title = {Sur le op\'{e}rations lin\'{e}ares dans l'espace de fonctions born\'{e}es},
	volume = {5},
	year = {1935}}

@article{moore1928:_triods,
	author = {Moore, R. L.},
	journal = {Proc. Natl. Acad. Sci.},
	number = {1},
	pages = {85-88},
	title = {Concerning triods in the plane and the junction points of plane continua},
	volume = {14},
	year = {1928}}

@book{soltan2020:_convexsets,
	address = {Hackensack},
	author = {Soltan, V.},
	edition = {second},
	publisher = {World Scientific},
	title = {Lectures on {C}onvex {S}ets},
	year = {2020}}
